\numberwithin{equation}{section}
\newtheorem{thm}{Theorem}
\newtheorem{lem}{Lemma}[section]
\newtheorem{prop}[lem]{Proposition}
\newtheorem{cor}[lem]{Corollary}
\newtheorem{defn}[lem]{Definition}
\newdefinition{rem}[lem]{Remark}
\newdefinition{ex}[lem]{Example}
\newproof{pot1}{Proof of Theorem \ref{Thm1}}
\newproof{pot2}{Proof of Theorem \ref{Thm2}}
\newproof{pot3}{Proof of Theorem \ref{Thm3}}
\journal{Topology and its Applications}
\newcommand{\A}{\mathcal{A}}
\newcommand{\F}{\mathcal{F}}
\newcommand{\G}{\mathcal{G}}
\newcommand{\s}{\operatorname{span}}
\newcommand{\R}{\mathbb{R}}
\newcommand{\g}{\mathfrak{g}}
\newcommand{\X}{\mathfrak{X}}
\newcommand{\ad}{\operatorname{ad}}  
\newcommand{\Rank}{\operatorname{rank}}
\newcommand\Lie{\mathrm{Lie}}
\begin{document}

\begin{frontmatter}



\title{Foliations formed by generic coadjoint orbits of Lie groups corresponding to a class 7-dimensional solvable Lie algebras}


\author[1,2]{Tuyen T. M. Nguyen}
\ead{ntmtuyen@dthu.edu.vn}
\address[1]{Faculty of Mathematics and Computer Science, University of Science, VNU Ho Chi Minh City, Vietnam}
\address[2]{Faculty of Mathematics and Computer Science Teacher Education, Dong Thap University, Dong Thap Province, Vietnam}

\author[3]{Vu A. Le\corref{cor1}}
\ead{vula@uel.edu.vn}
\address[3]{Department of Economic Mathematics, University of Economics and Law, 
Vietnam National University -- Ho Chi Minh City, Vietnam}
\cortext[cor1]{Corresponding author.}

\author[4]{Tuan A. Nguyen}
\ead{tuannguyenanh@hcmue.edu.vn}
\address[4]{Department of Primary Education, Ho Chi Minh City University of Education, Vietnam}

\begin{abstract}
	We consider all connected and simply connected 7-dimensional Lie groups 
	whose Lie algebras have nilradical $\g_{5,2} = \s \{X_1, X_2, X_3, X_4, X_5 \colon [X_1, X_2] = X_4, [X_1, X_3] = X_5\}$ of Dixmier. 
	First, we give a geometric description of the maximal-dimensional orbits in the coadjoint representation of all considered Lie groups. 
	Next, we prove that, for each considered group, the family of the generic coadjoint orbits 
	forms a measurable foliation in the sense of Connes. Finally, the topological classification of all these foliations is also provided.
\end{abstract}

\begin{keyword}
K-orbit \sep Lie algebra \sep Lie group \sep measurable foliation.


\MSC[2010] 53C12 \sep 17B08 \sep 22E27 \sep 57R30 \sep 17B30 \sep 22E45.
\end{keyword}

\end{frontmatter}


\section{Introduction}\label{sec1}

The study of foliations on manifolds has a long history in mathematics. Since the works of Ehresmann and Reeb \cite{e-r} in 1944, Ehresmann \cite{ehr} in 1951, and Reeb \cite{reeb} in 1952, the foliations on manifolds have enjoyed rapid development. Nowadays, they become the focus of a great deal of research activity (see \cite{law}).
In general, the leaf space of a foliation with quotient topology is a fairly intractable topological space. To improve upon the shortcoming, Connes \cite{con} proposed the notion of measurable foliations in 1982 and associated each such foliation $(V, \F)$ with a C*-algebra $C^*(V, \F)$. It represents the leaf space $V/\F$ in the following sense: when the foliation comes from a fibration (with connected fibers) $p: V \to M$ then $C^*(V, \F)$ is isomorphic to $C_0 (M) \otimes \mathcal{K}$, where $C_0(M)$ is the algebra of continuous complex-valued functions defined on $M$ vanishing at infinity and $\mathcal{K}$ denotes the C*-algebra of compact operators on an (infinite-dimensional) separable Hilbert space. During the last few decades, these concepts of Connes have become important tools of non-commutative differential geometry and have attracted much attention from mathematicians around the world.

In 1962, Kirillov \cite[Section 15]{kir} invented the method of orbits that quickly became the most important one in the
representation theory of Lie groups and Lie algebras. The key to Kirillov's method of orbits is generic (in a certain sense) orbits in the coadjoint representation (K-orbits, for short) of Lie groups. Hence, the problem of describing the geometry of (generic) K-orbits of each Lie group is very important to study. A concept closely related to Kirillov's method of orbits
is the so-called \emph{MD-group} proposed by Diep \cite{diep} in 1980. An MD-group, in terms of Diep \cite[Section 4.1]{diep}, is a finite-dimensional solvable real Lie group whose K-orbits are of dimension zero or maximal dimension. It is very remarkable that the family of maximal-dimensional K-orbits of an MD-group $G$ forms a so-called measurable foliation in the sense of Connes \cite{con} which is called MD-foliation associated with $G$.

Thus, the nature of the problem concerning MD-groups, MD-foliations, and their Connes’ C*-algebras
is an interesting combination of Kirillov’s method of orbits with Connes’ method in non-commutative geometry. Therefore, this problem is worth studying.
During the period from 1990 to now, there have been much research works on MD-groups and MD-foliations, typically working \cite{vu90, Vu90, vu93}, \cite{v-s} and \cite{v-t,v-h,v-h-t}, ... Up today, the problem of classifying MD-groups and MD-foliations still remains~open. 

Naturally, a problem arises as follows: For a solvable Lie group that is not an MD-group, does the family of its maximal-dimensional K-orbits 
have the same properties as the K-orbits of MD-groups? We hope that the answer is positive. Namely, we hope that some nice properties of MD-groups can be generalized for a larger class of these solvable Lie groups.

According to Dixmier \cite[Proposition 1]{dix}, the class of 5-dimensional real or complex nilpotent Lie algebras consists of
nine algebras which are denoted by $(\g_1)^5$, $(\g_1)^2 \oplus \g_3$, $\g_1 \oplus \g_4$, $\g_{5,1}$, $\g_{5,2}$, 
$\g_{5,3}$, $\g_{5,4}$, $\g_{5,5}$, $\g_{5,6}$. Recently, we have classified 7-dimensional indecomposable solvable Lie algebras 
with nilradical $(\g_1)^2 \oplus \g_3, \g_1 \oplus \g_4, \g_{5,2}, \g_{5,4}$ in \cite{v-t-t-t-t}. Combining these results with those of 
Ndogmo and Winternitz \cite{n-w}, Rubin and Winternitz \cite{r-w}, \v Snobl and Kar\'asek \cite{s-k}, \v Snobl and Winternitz \cite{s-w05,s-w09},
Gong \cite{gong}, Parry \cite{par}, Hindeleh and Thompson \cite{h-t},
we achieve a full classification of 7-dimensional indecomposable solvable Lie algebras. 
As a continuation of \cite{v-t-t-t-t}, we combine the idea of Kirillov's method of orbits with Connes' method in non-commutative geometry
to consider all Lie groups corresponding to Lie algebras classified in \cite{v-t-t-t-t}. As stated above, we hope that some nice properties of MD-groups can be generalized for a larger class of these solvable Lie groups. 
Namely, we started to study Lie groups corresponding to Lie algebras in Table \ref{tab1} which are not too complicated 
but also not trivial among the groups classified in \cite{v-t-t-t-t}.
In fact, we have solved quite completely in \cite{t-v} the similar problem of MD-groups for five families of 
Lie groups corresponding to Lie algebras $\G_2$, $\G_3$, $\G^{00}_4$, $\G_9$, $\G^\lambda_{10}$ in Table \ref{tab1}.

\begin{longtable}{|p{.06\textwidth} |p{.500\textwidth} |p{.3\textwidth}|}
	\caption{7-dimensional solvable Lie algebras with\\
		\hskip3cm nilradical $\mathfrak{g}_{5,2}$}\label{tab1} \\
	\hline No. & Triplets $(a_X, a_Y, [X,Y])$ & Conditions \endfirsthead
	\caption*{\textbf{Table 1} (continued)} \\
	\hline No. & $(\ad_X, \ad_Y, [X,Y])$ & Conditions \\
	\hline \endhead \hline
	$\G_1^\lambda$ & $(1, -1, 0, 0, 1)$, $(0, 0, 1, 0, 1)$, $\lambda X_4$ & $\lambda\in\{0,1\}$ \\
	$\G_2$ & $(1, 0, 0, 1, 1)$, $(0, 0, 1, 0, 1)$ &\\
	$\G_3$ & $(0, 1, 0, 1, 0)$, $(0, 0, 1, 0, 1)$& \\
	$\G^{\lambda_1\lambda_2}_4$ & $(1, 0, \lambda_1, 1, 1+\lambda_1)$, $(0, 1, \lambda_2, 1, \lambda_2)$ 
	& $(\lambda_{1},\lambda_2)\neq (-1,0)$, $\lambda_1+1\neq\lambda_2$\\
	$\G_5$ & $(0, 0, 1, 0, 1)$, $(1, 1, 0, 2, 1) + E_{12}$& \\
	$\G^{\lambda}_6$ & $(1, 1, \lambda, 2, 1+\lambda)$, $(0, 0, 1, 0, 1) + E_{12}$ & $\lambda\in \R$\\
	$\G_7$ & $(0, 1, 1, 1, 1)$, $(1, 1, 0, 2, 1) + E_{25}$& \\
	$\G^{\lambda}_8$ & $(1, 1+\lambda, \lambda, 2+\lambda, 1+\lambda)$, $(0, 1, 1, 1, 1) + E_{25}$& $\lambda\in \R$\\
	$\G_9$ & $(0, 0, 1, 0, 1)$, $(0, 1, 0 , 1, 0) + E_{35}$& \\
	$\G^{\lambda}_{10}$ & $(0 , 1 , \lambda, 1, \lambda)$, $(0, 0, 1, 0, 1) + E_{35}$ & $\lambda\in \R$ \\
	$\G_{11}$ & $(0, 1, 1, 1, 1)$, $(1, 0, 0, 1, 1) + E_{23} + E_{45}$& \\
	$\G^{\lambda}_{12}$ & $(1, \lambda, \lambda, 1+\lambda, 1+\lambda)$, $(0, 1, 1, 1, 1) + E_{23} + E_{45}$& $\lambda \in \R \setminus \{-1\}$\\ 
	$\G_{13}^{\lambda}$ & $(0, 1, 1, 1, 1)$, $(\lambda, S_{01}, S_{\lambda1})$ & $\lambda \geq 0$ \\
	$\G_{14}^{\lambda_{1}\lambda_2}$ & $(1, \lambda_1, \lambda_1, 1+\lambda_1, 1+\lambda_1)$, $(0, S_{\lambda_21}, S_{\lambda_21})$ & $\lambda_2 \geq 0, \lambda_1\neq-1$ \\ \hline
	$\G_{15}$ & $(0, S_{01}, S_{01})$, $(0, 1, 1, 1, 1) +E_{25} - E_{34}$& \\ 
	$\G_{16}^{\lambda}$ & $(0, S_{01}, S_{01}) + E_{25}$, $(0, 1, 1, 1, 1) + \lambda(E_{25} - E_{34})$ & $\lambda \geq 0$ \\ \hline
\end{longtable}

Here, as usual, we use the convention that $[X,Y]$ disappears if $[X,Y]=0$; 
we denote by $(a_1, \dots, a_5)$ the diagonal $(5 \times 5)$-matrix diag$(a_1, \dots, a_5)$, $E_{ij}$ is the $(5 \times 5)$-matrix whose only non-zero entry is 1 in row $i$, column $j$ ($1 \leq i, j \leq 5$) and 
$S_{ab}: = \begin{bmatrix}
a&b\\ -b&a	
\end{bmatrix}$.

In this paper, we continue to study the similar problem of MD-groups for all Lie groups corresponding to the remaining Lie algebras in Table \ref{tab1}. 
The main results of the paper are as follows. First, we describe the geometric pictures of maximal-dimensional K-orbits of the considered Lie groups. 
Next, we prove that the families of all generic maximal-dimensional K-orbits of the Lie groups corresponding to considered Lie algebras in Table \ref{tab1}
form measurable foliations (in the sense of Connes \cite{con}). Finally, we give the topological classification of these foliations.

The paper is organized into four sections, including this introduction. Section \ref{sec2} describes the geometry of generic K-orbits of considered Lie groups.
Section \ref{sec3} is devoted to dealing with foliations formed by generic K-orbits.
We also give some concluding remarks in the last section.

\section{Generic K-orbits of considered Lie groups}\label{sec2}

In \cite[Section 2]{t-v}, we recalled some notions and well-known results about adjoint and coadjoint representations, 
foliations and measurable foliations which will be used later. For more details, we refer to the book of Kirillov \cite[$\S$15 and $\S$6]{kir}. 
Moreover, by \cite{v-t-t-t-t}, we have sixteen families of connected and simply connected (real solvable) Lie groups corresponding 
to the indecomposable Lie algebras given in Table \ref{tab1}. Note that throughout this section the condition for the parameters 
$\lambda, \lambda_1, \lambda_2$ in each Lie group G is the same as the condition for the parameters in each algebra $\G = \Lie(G)$ 
in Table \ref{tab1} if nothing more is said. In this section, we shall continue using the notations as in \cite{t-v}. 
Recall that the picture of maximal-dimensional K-orbits of Lie groups $G \in \{G_2, G_3, G_4^{00}, G_{9}, G_{10}^\lambda\}$ 
described in \cite[Theorem 19]{t-v}. Here, we will describe the picture of maximal-dimensional K-orbits of the remaining Lie groups.

For convenience, we put $\A$ 
be sixteen connected and simply connected Lie groups corresponding to sixteen Lie algebras listed in Table \ref{tab1}.
For $G \in \A$, denote by $\G^*$ the dual space of the Lie algebra $\G = \Lie(G)$ of $ G $. Clearly, we can identify $\G^* \equiv \R^7$ 
by fixing in it the basis $(X_1^*, \dots, X_5^*, X^*, Y^*)$ which is the dual of the basis $(X_1, \dots, X_5, X, Y)$ of $ \mathcal G $. Let $ F = \alpha_1X_1^*+\dots+\alpha_5X_5^*+\alpha X^*+\beta Y^* \equiv (\alpha_1,\dots,\alpha_5,\alpha,\beta) $
be an arbitrary element of $\G^* \equiv \R^7$. The notation $\Omega_F$ will be used to denote the K-orbit of $G$
containing $F$. By \cite[Proposition 17, Corollary 7]{t-v}, $G$ is exponential (then $\Omega_F=\Omega_F(\G)$) 
if it belongs to $\A_1: = \big\{G^\lambda_1$, $G_2$, $G_3$, $G_4^{\lambda_1\lambda_2}$, $G_5$, $G^\lambda_6$,
$G_7$, $G^\lambda_8$, $G_9$, $G^\lambda_{10}$, $G_{11}$, $G_{12}^\lambda \big\}$, and not exponential otherwise: $G \in \A_2: = \left\lbrace G_{13}^\lambda, G_{14}^{\lambda_1\lambda_2},
G_{15}, G_{16}^\lambda \right\rbrace$. Of course, $\A = \A_1 \sqcup \A_2$. Besides, we have the following proposition.

\begin{prop}[see \cite{kir}]\label{Partition}
	Assume that $G$ is connected. Then $\Omega_F(\G) = \Omega_F$ for all $F \in \G^*$ if the family $\left\lbrace \Omega_F(\G) \right\rbrace_{F \in \G^*}$ 
	forms a partition of $\G^*$ and all $\Omega_{F'}(\G), F' \in \Omega_F$ are either closed or open (relatively) 
	in $\Omega_F, F \in \G^*$. 
\end{prop}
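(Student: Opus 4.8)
The plan is to fix an arbitrary $F\in\G^*$, write $O:=\Omega_F$ for the full $G$-orbit, and show that the single $\G$-leaf $\Omega_F(\G)$ already exhausts $O$; the inclusion $\Omega_F(\G)\subseteq\Omega_F$ is automatic, since every point of $\Omega_F(\G)$ is obtained from $F$ by applying the coadjoint action of an element $\Exp X$, $X\in\G$, which lies in $G$. Because $G$ is connected, $O$ is the continuous image of a connected space under $g\mapsto K(g)F$ and is therefore connected. By the first hypothesis the leaves $\{\Omega_{F'}(\G)\}$ partition $\G^*$; restricting to those $F'\in O$ and using $\Omega_{F'}(\G)\subseteq\Omega_{F'}=O$, I may regard $O$ as the disjoint union of the leaves $\Omega_{F'}(\G)$, $F'\in O$. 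Everything is thus reduced to a connectedness argument carried out inside the single connected orbit $O$.

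First I would show that each leaf $\Omega_{F'}(\G)$ is \emph{open} in $O$. The differential at $0$ of the map $\G\to O$, $X\mapsto K(\Exp X)F'$, is $X\mapsto \ad^{*}_XF'$, whose image is exactly $T_{F'}O$; hence this map is a submersion onto $O$ near $0$, and its image, which is contained in $\Omega_{F'}(\G)$, contains an open neighbourhood of $F'$ in $O$. By the partition property the leaf through any point $q\in\Omega_{F'}(\G)$ coincides with $\Omega_{F'}(\G)$, so repeating the argument based at $q$ shows that $\Omega_{F'}(\G)$ contains an open neighbourhood of each of its points, i.e. it is relatively open in $O$. This is the step where the second hypothesis is essential: it guarantees that each leaf is genuinely embedded (relatively open or relatively closed), so that the infinitesimal description of $\Omega_{F'}(\G)$ may legitimately be promoted to an open subset of $O$; a leaf that is relatively closed and full-dimensional is then seen to be clopen.

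Finally I would invoke connectedness. The orbit $O$ is now exhibited as a disjoint union of nonempty relatively open leaves $\Omega_{F'}(\G)$, $F'\in O$; since $O$ is connected there can be only one such leaf, which must therefore be all of $O$. Taking $F'=F$ gives $\Omega_F(\G)=O=\Omega_F$, and as $F$ was arbitrary the claim follows. I expect the main obstacle to be the middle step, namely passing from the purely Lie-algebraic description of $\Omega_F(\G)$ to an honest relatively open submanifold of the group orbit: a priori the leaf is only an immersed integral manifold of the coadjoint distribution and need not be locally closed, and it is precisely the two hypotheses -- the partition and the open-or-closed dichotomy -- that exclude this pathology and let the connectedness argument go through.
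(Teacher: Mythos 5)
The paper never proves this proposition: it is quoted from Kirillov \cite{kir} and used as a black box, so there is no in-paper argument to compare against. Measured against the standard argument it encodes, your reconstruction has the right skeleton: the inclusion $\Omega_F(\G)\subseteq\Omega_F$; connectedness of the orbit $O=\Omega_F$ as a continuous image of the connected group $G$; the restriction of the partition $\{\Omega_{F'}(\G)\}$ to $O$ (using $F'\in\Omega_{F'}(\G)$, obtained from $U=0$, so that two leaves meeting at a point coincide); the submersion argument, correctly based on the fact that the image of $X\mapsto\ad^*_XF'$ is the whole tangent space of the orbit, propagated to every point of a leaf via the partition property; and the final connectedness step.

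The one soft spot is exactly the place you flag yourself, and your patch for it does not work as written. The submersion argument gives openness of each leaf in the \emph{intrinsic} orbit topology, i.e.\ the quotient topology of $G/G_{F}$ transported by the orbit map; it does not by itself give relative openness in the topology induced from $\G^*$, which is what the proposition's dichotomy hypothesis refers to (the two topologies differ precisely when the orbit fails to be locally closed, which does happen for non-exponential solvable groups). Your sentence that a leaf which is relatively closed and full-dimensional ``is then seen to be clopen'' is asserted, not proved: if all leaves were relatively closed but not open, the complement of a leaf would be a possibly infinite union of closed leaves, and your clopen-plus-connectedness argument would collapse in the relative topology. The clean repair stays entirely within your own framework: run the whole argument in the intrinsic topology. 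There $G/G_F$ is connected because $G$ is, every leaf is open by your submersion-plus-partition step, so the partition has a single member; and since the desired conclusion $\Omega_F(\G)=\Omega_F$ is a set-theoretic identity, the choice of topology in the intermediate steps is immaterial. This incidentally shows that, granted the partition hypothesis, the open-or-closed hypothesis matters only if one insists on arguing in the relative topology from $\G^*$, and that the ``closed'' alternative is then vacuous, since the argument forces a single leaf. So: essentially correct and in line with the standard proof, provided the middle step is restated in the quotient topology rather than patched by an unproved appeal to the dichotomy hypothesis.
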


Therefore, we also have $\Omega_F = \Omega_F(\G)$ for $G \in \A_2$. 
That means $\Omega_F = \Omega_F(\G)$ for all $G \in \A$. 

The geometric picture of the K-orbits of considered Lie groups is given as follows.

\begin{thm}\label{picture K-orbit}
	Assume that $G \in \A$. Denote by   $(\alpha_1,\dots,\alpha_5,\alpha,\beta)$ the coordinate of $F \in \G^* \equiv\R^7$
	with respect to the basis $(X_1^*, \dots, X_5^*, X^*, Y^*)$ which is dual one of the fixed basis $(X_1, \dots, X_5, X, Y)$ of $\G$. 
	Then, the maximal dimension of K-orbits of $G$ is exactly six, and the picture of six-dimensional K-orbits of $G$ is given in Table \ref{tab2}.
\end{thm}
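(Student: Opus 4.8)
The plan is to reduce everything to the Kirillov (Lie--Poisson) form and then integrate it explicitly family by family. Recall that for any $F \in \G^*$ the dimension of the coadjoint orbit equals the rank of the skew-symmetric bilinear form $B_F$ on $\G$ defined by $B_F(U,V) = F([U,V])$; equivalently, $\dim\Omega_F = \Rank\big( F([e_i,e_j]) \big)_{i,j=1}^{7}$, where $(e_1,\dots,e_7) = (X_1,\dots,X_5,X,Y)$. Since a skew-symmetric $(7\times 7)$-matrix has even rank at most $6$, every orbit has dimension $0$, $2$, $4$ or $6$, and, because we already know $\Omega_F = \Omega_F(\G)$ for all $G\in\A$, the claim that the maximal dimension is exactly $6$ reduces to exhibiting, for each $G\in\A$, at least one $F$ with $\Rank B_F = 6$. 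So the first step is to assemble the full bracket table of $\G$ from the data of Table \ref{tab1}: the nilradical relations $[X_1,X_2]=X_4$, $[X_1,X_3]=X_5$ together with the two derivations encoded by $(a_X, a_Y)$ (reading $\ad_X,\ad_Y$ off the listed $(5\times5)$-matrices) and the single extra bracket $[X,Y]$.

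With the bracket table in hand I would write $B_F$ as an explicit $(7\times 7)$-matrix whose entries are linear in $(\alpha_1,\dots,\alpha_5,\alpha,\beta)$ and determine the locus where $\Rank B_F = 6$, i.e.\ locate a nonvanishing $6\times 6$ minor (equivalently a nonzero Pfaffian of a principal $6\times6$ submatrix). The resulting semialgebraic conditions are precisely the genericity conditions recorded in Table \ref{tab2}. Because $X_4,X_5$ are central in the nilradical and $\ad_X,\ad_Y$ are (mostly) diagonal, the generic condition typically localizes to a single coordinate such as $\alpha_4$ or $\alpha_5$ (or, for the families built from the blocks $S_{ab}$, to $\alpha_4^2+\alpha_5^2$), and this isolates which $F$ lie on a six-dimensional leaf.

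The second, longer step is to produce the geometric picture of each generic orbit. For a basis vector $U$ one has $\mathrm{Ad}(\exp U) = \exp(\ad_U)$, hence $K(\exp U) = \big(\exp(-\ad_U)\big)^{*}$ acts on $\G^*$; writing a general group element in canonical coordinates of the second kind as $\exp(t_1 X_1)\cdots\exp(t_5 X_5)\exp(sX)\exp(rY)$ and applying $K$ of this product to $F$ yields a parametrization of $\Omega_F$. Eliminating the parameters then gives the closed-form description (a graph, a level set of the orbit invariants, or a cylinder/hyperboloid/spiral over a coordinate plane) displayed in Table \ref{tab2}. For the exponential groups $G\in\A_1$ the map $\exp$ is a diffeomorphism and each $\ad_U$ is diagonal-plus-nilpotent with real eigenvalues, so the exponentials are polynomial-times-exponential in the parameters and the elimination stays manageable.

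The main obstacle I expect is twofold. First, the computation is genuinely case-by-case across the eleven remaining families (each carrying parameters $\lambda,\lambda_1,\lambda_2$), so $\exp(\ad_X),\exp(\ad_Y)$ and their transposes must be recomputed and the parameters eliminated separately for each row of Table \ref{tab1}. Second, and more delicate, are the non-exponential groups $G\in\A_2 = \{G_{13}^\lambda, G_{14}^{\lambda_1\lambda_2}, G_{15}, G_{16}^\lambda\}$, whose derivations contain the rotation blocks $S_{ab}$: here $\exp(\ad)$ is trigonometric, the coadjoint action is not injective in the angular parameter, and the orbits wind around coordinate planes, so one must pass from the infinitesimal picture to the \emph{global} orbit. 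This is exactly where Proposition \ref{Partition} earns its keep: since $\Omega_F = \Omega_F(\G)$, I may integrate the Lie-algebra distribution to obtain the leaf and be certain it coincides with the actual group orbit, so the trigonometric winding does not prevent $\Omega_F$ from being an embedded six-dimensional submanifold. Checking the closed/open alternative of Proposition \ref{Partition} for these four families and presenting their orbits in a uniform closed form is the hardest part of the argument.
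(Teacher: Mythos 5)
Your proposal is correct and takes essentially the same route as the paper: its proof runs through exactly your steps, namely computing the matrix of $B_F(U,V)=\langle F,[U,V]\rangle$ in the basis $(X_1,\dots,X_5,X,Y)$, extracting the rank-six (genericity) conditions recorded in Table \ref{tab2}, and then describing $\Omega_F=\Omega_F(\G)=\{F_U \mid U\in\G\}$ by computing $\exp(\ad_U)$ for a general $U\in\G$ (the paper does this with MAPLE, case by case for representatives $G_4^{\lambda_1\lambda_2}$, $G_{12}^\lambda$, $G_{13}^\lambda$ and tabulates the rest) and eliminating the parameters, with the identification $\Omega_F=\Omega_F(\G)$ for the non-exponential families in $\A_2$ supplied by Proposition \ref{Partition}, just as you invoke it. The only cosmetic difference is that you parametrize group elements in canonical coordinates of the second kind (a product of one-parameter subgroups), whereas the paper applies the single exponential $\exp(\ad_U)$ of a general algebra element; this changes nothing of substance.
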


\begin{proof}
	The proof will take place through three steps as follows.
	\begin{itemize}
	\item Step 1: For every pair $X, Y \in \G$ and $F(\alpha_1,\dots,\alpha_5,\alpha,\beta) \in \G^*$,  we need to determine the matrix of the bilinear form $B_F(X, Y) := \langle F, [X, Y]\rangle$.
	\item Step 2: Find necessary and sufficient conditions for $\Rank B_F=6$ (the maximal dimension of K-orbits of $G$ is exactly six).
	\item Step 3: Describe explicitly every six-dimensional K-orbits of $G$, i.e. describe $\Omega_F \equiv \Omega_F(\G) = \{F_U \, | \, U \in \G\} \subset \G^* \equiv \R^7$  where
	\[
	F_U = \sum_{i=1}^{5}x^*_iX^*_i+x^*X^*+y^*Y^* 	\equiv (x^*_1 , x^*_2, x^*_3, x^*_4, x^*_5, x^*,y^*) \in \G^*
	\]
	is 
	defined by $\left\langle F_U, T\right\rangle =\left\langle F,\exp(\ad_U) T\right\rangle $ for all $T \in \G.$
\end{itemize}	
	
	First of all, the assertion of Theorem \ref{picture K-orbit} for $G \in \left\lbrace G_2, G_3, G_9, G^\lambda_{10} \right\rbrace$ 
	had been proved in \cite{t-v}. Below we will give detailed calculations for three groups $G_4^{\lambda_{1}\lambda_2}$, $G_{12}^{\lambda}$ and $G_{13}^{\lambda}$
	which do not belong to the family of groups considered in \cite{t-v}.
	
	Let $G = G^{\lambda_1\lambda_2}_4$. In Step 1, by direct computations, the matrix of the bilinear form $B_F$ is as follows
	\[
		B_F = \begin{bmatrix}
			0 & \alpha_4 & \alpha_5 & 0 & 0 & - \alpha_1 & 0 \\
			- \alpha_4 & 0 & 0 & 0 & 0 &0 & -\alpha_2 \\
			- \alpha_5 & 0 & 0 & 0 & 0 & -\lambda_1\alpha_3 & -\lambda_2 \alpha_3 \\
			0 & 0 & 0 & 0 & 0 & - \alpha_4 & -\alpha_4\\
			0 & 0 & 0 & 0 & 0 & -(1+\lambda_1) \alpha_5 & -\lambda_2 \alpha_5\\
			\alpha_1 & 0 & \lambda_1\alpha_3 & \alpha_4 & (1+\lambda_1)\alpha_5 & 0 & 0\\
			0 & \alpha_2 & \lambda_2\alpha_3 & \alpha_4 & \lambda_2\alpha_5 & 0 & 0
		\end{bmatrix}.
	\]
	For Step 2, it is obvious that the maximal rank of $B_F$ is six. Moreover
	\[
		\begin{array}{l l l}
			\Rank (B_F) = 6 & \Leftrightarrow & \alpha_4 = 0 \neq \alpha_2\alpha_5 \quad \text{ or } \quad \alpha_4 \neq 0 \neq \alpha_3^2+\alpha_5^2.
		\end{array}
	\]
	In Step 3, we will describe the picture of maximal-dimensional K-orbits $\Omega_F(\G) = \{F_U \, | \, U \in \G\} \subset \G^* \equiv \R^7$ of $G$. 
To determine $F_U$ with $U \equiv (x_1,x_2,x_3,x_4,x_5,x,y) \in \G$, we use MAPLE to determine $\exp(\ad_U)$ 
with respect to the basis $(X_1, X_2, X_3, X_4, X_5, X, Y)$. We can find that
\[
\exp(\ad_U) =
\begin{bmatrix}
e^{x} & 0 & 0 & 0 & 0 & -x_1\xi & 0 \\
0 & e^{y} & 0 & 0 & 0 & 0  & x_2\varepsilon \\
0 & 0 & e^{\lambda_1x+\lambda_2y} & 0 & 0 & \lambda_1x_3\zeta & \lambda_2x_3\zeta \\
x_2e^{x}\varepsilon	 & x_1e^{y}\xi & 0 & e^{x+y} &0 & b_4 & d_4 \\
a_4 & 0 & x_1e^{x\lambda_1+y\lambda_2}\xi& 0 & e^{\lambda_1x+\lambda_2y+x} & c_4 & f_4 \\
0 & 0 & 0 & 0 & 0 & 1 & 0	\\
0 & 0 & 0 & 0 & 0 & 0 & 1
\end{bmatrix},
\]
where $\xi= \frac{e^{x}-1}{x}, \quad \varepsilon=\frac{1-e^{y}}{y}, \quad \zeta= \frac{1-e^{x\lambda_1+y\lambda_2}}{x\lambda_1+y\lambda_2}$ and $a_4, b_4, c_4, d_4, f_4$ are expressions containing parameters $x_1, x_3, x_5, x, y$.
By a direct computation, we get
\[\begin{array}{l}
x^*_1=\alpha_1 e^{x}+\alpha_4 \frac{x_2e^{x}(1-e^{y})}{y} + \alpha_5\frac{x_3e^{x}(1-e^{\lambda_1x+\lambda_2y})}{\lambda_1x+\lambda_2y}
\\
x^*_2=\alpha_2e^{y}+ \alpha_4\frac{x_1e^{y}(e^{x}-1)}{x}   
\\
x^*_3=  \alpha_3 e^{\lambda_1x+\lambda_2y} +\alpha_5\frac{x_1e^{\lambda_1x+\lambda_2y}(e^{x}-1)}{x} 
\\
x^*_4= \alpha_4e^{x+y}
\\
x^*_5= \alpha_5 e^{\lambda_1x+x+\lambda_2y}
\\
x^*=  \alpha_1\frac{x_1(1-e^{x})}{x} +\alpha_3\lambda_{1}x_3\frac{1-e^{\lambda_1x+\lambda_2y}}{\lambda_1x+\lambda_2y}  +\alpha_4p + \alpha_5 q+ \alpha 
\\
y^*=\alpha_2x_2\frac{1-e^{y}}{y}+ \alpha_3\lambda_2x_3\frac{1-e^{\lambda_1x+\lambda_2y}}{\lambda_1x+\lambda_2y} +\alpha_4r+\alpha_5 s+\beta.
\end{array}\]
In order to describe all maximal-dimensional K-orbits of $G = G^{\lambda_1\lambda_2}_4$, we only consider 
	$F(\alpha_1,\dots,\alpha_5,\alpha,\beta) \in \G^*$ with $\alpha_4 = 0 \neq \alpha_2\alpha_5$ or $ \alpha_4 \neq 0 \neq \alpha_3^2+\alpha_5^2$, 
	the remaining parameters are arbitrary. There are three cases as follows.
	\begin{itemize}
		\item The first case is $\alpha_4 = 0 \neq \alpha_2\alpha_5$. Obviously, each of $x^*_1, x^*_2, x^*, y^*$ runs over line $\R$, 
		while $x^*_4 \equiv 0$, and $x^*_2, x^*_5 \in \R$ satisfying $\alpha_2 x^*_2 > 0$ and $\alpha_5 x^*_5 >0$.
		Hence, $\Omega_F$ is a part of hyperplane as follows:
		\[
			\Omega_F = \left\lbrace (x^*_1, x^*_2, x^*_3, x^*_4, x^*_5, x^*, y^*) \in \G^* \, \big| \, x^*_4 = 0,  \alpha_2 x^*_2 > 0, \alpha_5 x^*_5 >0 \right\rbrace.
		\]

		\item The second case is $\alpha_5=0\neq\alpha_3\alpha_4$. Obviously, each of $x^*_1, x^*_2, x^*, y^*$ runs over line $\R$, 
		while $x^*_5 \equiv 0$, and $x^*_3, x^*_4 \in \R$ satisfying $\alpha_3 x^*_3 > 0$ and $\alpha_4 x^*_4 >0$. 
		For this reason, $\Omega_F$ is a part of hyperplane as follows:
		\[
			\Omega_F= \left\lbrace (x^*_1, x^*_2, x^*_3, x^*_4, x^*_5, x^*, y^*) \in \G^* \, \big| \,  x^*_5 = 0, \alpha_3 x^*_3 > 0, \alpha_4 x^*_4 >0 \right\rbrace.
		\]

		\item The final case is $\alpha_4 \alpha_5 \neq 0$. Clearly, each of $x^*_1, x^*, y^*$ runs over line $\R$, 
		while $x^*_4, x^*_5 \in \R$ satisfying $\alpha_4 x^*_4 > 0$ and $\alpha_5 x^*_5 >0$. 
		The coordinates $x^*_2, x^*_3, x^*_4, x^*_5$ satisfy the following equation
		\[
			\left( x^*_2-\frac{x^*_3x^*_4}{x^*_5}\right) \frac{{x^*_4}^{\frac{1+\lambda_1}{\lambda_2-\lambda_1-1}}}{{x^*_5}^{\frac{1}{\lambda_2-\lambda_1-1}}}
			= \left( \alpha_2-\frac{\alpha_3\alpha_4}{\alpha_5}\right) 
				\frac{{\alpha_4}^{\frac{1+\lambda_1}{\lambda_2-\lambda_1-1}}}{{\alpha_5}^{\frac{1}{\lambda_2-\lambda_1-1}}}.
		\]
		Thus, $\Omega_F$ is a part of hypersurface of degree two as follows:
		\begin{flalign*}
			\Omega_F = \left\lbrace (x^*_1, x^*_2, x^*_3, x^*_4, x^*_5, x^*, y^*) \in \G^* \; \Bigg| 
				\left( x^*_2-\frac{x^*_3x^*_4}{x^*_5}\right) \frac{{x^*_4}^{\frac{1+\lambda_1}{\lambda_2-\lambda_1-1}}}{{x^*_5}^{\frac{1}{\lambda_2-\lambda_1-1}}} = \right. \\	
				\left. = \left( \alpha_2-\frac{\alpha_3\alpha_4}{\alpha_5}\right) 
					\frac{{\alpha_4}^{\frac{1+\lambda_1}{\lambda_2-\lambda_1-1}}}{{\alpha_5}^{\frac{1}{\lambda_2-\lambda_1-1}}},
					\; \alpha_4 x^*_4 > 0, \, \alpha_5 x^*_5 >0 \right\rbrace
		\end{flalign*}
	\end{itemize}
This completes the proof for $G = G^{\lambda_1\lambda_2}_4$.

Let $G = G^{\lambda}_{12}$. In Step 1, by direct computations, the matrix of the bilinear form $B_F$ is as follows
\[
B_F = \begin{bmatrix}
0 & \alpha_4 & \alpha_5 & 0 & 0 & -\alpha_1 & 0 
\\
-\alpha_4 & 0 & 0 & 0 & 0 & -\lambda\alpha_2 & -(\alpha_2+\alpha_3)
\\
-\alpha_5 & 0 & 0 & 0 & 0 & -\lambda\alpha_3 & -\alpha_3 
\\
0 & 0 & 0 & 0 & 0 & -(1+\lambda)\alpha_4 & -(\alpha_4+\alpha_5)
\\
0 & 0 & 0 & 0 &0 & -(1+\lambda)\alpha_5 & -\alpha_5
\\
\alpha_1 & \lambda\alpha_2 & \lambda\alpha_3 & (1+\lambda)\alpha_4 & (1+\lambda)\alpha_5 & 0 & 0
\\
0 & \alpha_2+\alpha_3 & \alpha_3 & \alpha_4+\alpha_5 &  \alpha_5 & 0 & 0
\end{bmatrix}.
\]
For Step 2, it is obvious that the maximal rank of $B_F$ is six. Moreover
\[
\begin{array}{l l l}
\Rank (B_F) = 6 & \Leftrightarrow & \alpha_3\alpha_4 \neq 0 =\alpha_5 \mbox{  or } \alpha_5\neq0.
\end{array}
\]
	In Step 3, we also find that
\[\exp(\ad_U)=\begin{bmatrix}
e^{x} & 0 & 0 & 0 & 0 & -x_1p &0 
\\
0 & e^{\lambda x+y} & 0 & 0 & 0 &-\lambda x_2r & -x_2r 
\\
0 & ye^{\lambda x+y} &e^{\lambda x+y} & 0 & 0 &  b_{12} &f_{12}
\\
-x_2e^{x}r &x_1e^{\lambda x+y}p&0& e^{\xi} & 0  & c_{12} & g_{12} 
\\
a_{12} & x_1ye^{\lambda x+y}p & x_1e^{\lambda x+y}p & ye^{\xi} & e^{\xi} & d_{12} & h_{12}
\\
0 & 0 & 0 & 0 & 0 & 1 & 0
\\
0 & 0 & 0 & 0 & 0 & 0 & 1
\end{bmatrix} ,\]

where
\[ \begin{array}{l}
\xi=(1+\lambda) x+y, \quad 
p=\dfrac{e^{x}-1}{x}, \quad 
r=\dfrac{e^{\lambda x +y}-1}{\lambda x +y}\\
a_{12}, b_{12}, f_{12}: \text{ expressions  containing parameters }  x_{2}, x_{3}, x,  y\\
c_{12}, d_{12}, g_{12}, h_{12}: \text{  expressions containing  parameters }  x_{1}, x_{2}, x_{3}, x_{4}, x_{5}, x,  y.
\end{array}
\]
By a direct computation, we get
\[\begin{array}{l}
x^*_1=\alpha_1 e^{x}-\alpha_4 x_2e^{x}r + \alpha_5a_{12}
\\
x^*_2=\alpha_2e^{\lambda x+y}+ \alpha_3ye^{\lambda x+y}+ \alpha_4x_1e^{\lambda x+y}p +\alpha_5x_1ye^{\lambda x+y}p 
\\
x^*_3=  \alpha_3e^{\lambda x+y} +\alpha_5 x_1e^{\lambda x+y}p 
\\
x^*_4= \alpha_4e^{\xi} + \alpha_5ye^{\xi}
\\
x^*_5= \alpha_5 e^{\xi}
\\
x^*=  -\alpha_1x_1p -\alpha_2\lambda x_2r +\alpha_3 b_{12}  +\alpha_4c_{12} + \alpha_5  d_{12}+ \alpha 
\\
y^*=-\alpha_2x_2r + \alpha_3f_{12} +\alpha_4g_{12} +\alpha_5 h_{12}+\beta.
\end{array}\]
To describe all maximal dimensional K-orbits of $G = G^{\lambda}_{12}$, we only consider $F(\alpha_1,\dots,\alpha_5,\alpha,\beta) \in \mathcal{G}^{*}$ with $\alpha_3\alpha_4 \neq 0 =\alpha_5$ or $\alpha_5\neq0$, the remaining parameters are arbitrary.

\begin{itemize}
	\item 	The second case, $\alpha_3\alpha_4 \neq 0=\alpha_5$. Obviously, each of $x^*_1, x^*_2, x^*, y^*$ runs over line ${\mathbb R}$, while $x^*_5 \equiv 0$ and $x^*_3, x^*_4 \in {\mathbb R}; \, \alpha_3 x^*_3 > 0, \, \alpha_4 x^*_4 >0$. For this reason, we get	 	
	$\Omega_F$ is a part of hyperplane as follows:
	\[\begin{array}{r} \Omega_F=\big\{(x^*_1, x^*_2, x^*_3, x^*_4, x^*_5, x^*, y^*) \in \mathcal{G}^*:   x^*_5 = 0, \, \alpha_3 x^*_3 > 0, \, \alpha_4 x^*_4 >0 \big\}.\end{array}\]
	
	\item The final case, $\alpha_5 \neq 0$. Clearly, each of $x^*_1, x^*, y^*$ runs over line ${\mathbb R}$, while $ x^*_5 \in {\mathbb R}; \,  \alpha_5 x^*_5 >0$. By an easy computation it follows that the coordinates $x^*_2, x^*_3, x^*_4, x^*_5$ are satisfy the following equation
	\[ 	\left(x^*_2-\frac{x^*_3x^*_4}{x^*_5}\right)\frac{1}{{x^*_5}^{\frac{\lambda}{1+\lambda}}e^{\frac{x^*_4}{(1+\lambda)x^*_5}}}  =\left(\alpha_2-\frac{\alpha_3\alpha_4}{\alpha_5}\right)\frac{1}{\alpha_5^{\frac{\lambda}{1+\lambda}}e^{\frac{\alpha_4}{(1+\lambda)\alpha_5}}}\cdot\]
	Thus, we get	 	
	$\Omega_F$ is a half of hypersurface of degree two as follows:
	\[\begin{array}{rl}
	\Omega_F=\Bigg\{(x^*_1, x^*_2, x^*_3, x^*_4, x^*_5, x^*, y^*) \in \mathcal{G}^*: &	\left(x^*_2-\frac{x^*_3x^*_4}{x^*_5}\right)\frac{1}{{x^*_5}^{\frac{\lambda}{1+\lambda}}e^{\frac{x^*_4}{(1+\lambda)x^*_5}}} =\\& =\left(\alpha_2-\frac{\alpha_3\alpha_4}{\alpha_5}\right)\frac{1}{\alpha_5^{\frac{\lambda}{1+\lambda}}e^{\frac{\alpha_4}{(1+\lambda)\alpha_5}}},
	\, \alpha_5 x^*_5 >0 \Bigg\}.\end{array}\]
\end{itemize}
This completes the proof for $G = G^{\lambda}_{12}$.

Let $G=G^{\lambda}_{13}$. In Step 1, by direct computations, the matrix of the bilinear form $B_F$ is as follows
\[
B_F = \begin{bmatrix}
0 & \alpha_4 & \alpha_5 & 0 & 0 & 0 & -\lambda\alpha_1 
\\
-\alpha_4 & 0 & 0 & 0 & 0 & -\alpha_2 & -\alpha_3
\\
-\alpha_5 & 0 & 0 & 0 & 0 & -\alpha_3 & \alpha_2
\\
0 & 0 & 0 & 0 & 0 & -\alpha_4 & -(\lambda\alpha_4+\alpha_5)
\\
0 & 0 & 0 & 0 &0 & -\alpha_5 & \alpha_4-\lambda\alpha_5
\\
0 & \alpha_2 & \alpha_3 & \alpha_4 &  \alpha_5 & 0 & 0
\\
\lambda\alpha_1 & \alpha_3 & -\alpha_2 & \lambda\alpha_4+\alpha_5 & -(\alpha_4-\lambda\alpha_5) & 0 & 0
\end{bmatrix}.
\]
For Step 2, it is obvious that the maximal rank of $B_F$ is six. Moreover
\[
\begin{array}{l l l}
\Rank (B_F) = 6 & \Leftrightarrow & \alpha^{2}_{4}+\alpha^{2}_{5} \neq 0.
\end{array}
\]
In Step 3, we also find that
\[\exp(\ad_U) =\begin{bmatrix}
e^{\lambda y} & 0 & 0 & 0 & 0 & 0 & h_{13} 
\\
0 & e^{x}\cos(y) & -e^{x}\sin(y) & 0 & 0 & c_{13} & m_{13} 
\\
0 & e^{x}\sin(y) & e^{x}\cos(y) & 0 & 0 & d_{13} & n_{13} 
\\
a_{13} & x_1e^{x}\cos(y) & -x_1e^{x}\sin(y) & e^{\xi}\cos(y) & -e^{\xi}\sin(y) & f_{13} & k_{13}
\\
b_{13} & x_1e^{x}\sin(y) & x_1e^{x}\cos(y)  & e^{\xi}\sin(y) & e^{\xi}\cos(y) & g_{13} & l_{13}
\\
0 & 0 & 0 & 0 & 0 & 1 & 0
\\
0 & 0 & 0 & 0 & 0 & 0 & 1
\end{bmatrix}  ,\]
where
\[\begin{array}{l}
\xi=x+\lambda y,\\
a_{13}, b_{13}, c_{13}, d_{13},  m_{13}, n_{13} \colon \text{expressions containing parameters }  x_{2}, x_{3}, x,  y,\\
f_{13}, g_{13}, k_{13}, l_{13} \colon \text{expressions containing parameters }  x_{1}, x_{2}, x_{3}, x_{4}, x_{5}, x,  y,\\
h_{13} \colon \text{an expression containing parameters } x_{1}, x,  y.
\end{array}
\]

By a direct computation, we get
\[\begin{array}{l}
x^*_1=\alpha_1e^{\lambda y} +\alpha_4 a_{13} + \alpha_5b_{13}
\\
x^*_2=\alpha_2e^{x}\cos(y) + \alpha_{3}e^{x}\sin(y)+ \alpha_4x_1e^{x}\cos(y)   +\alpha_{5}x_1e^{x}\sin(y)
\\
x^*_3=  -\alpha_2e^{x}\sin(y) + \alpha_{3}e^{x}\cos(y)- \alpha_4x_1e^{x}\sin(y)   +\alpha_{5}x_1e^{x}\cos(y)
\\
x^*_4= \alpha_4e^{x+\lambda y} \cos(y) + \alpha_45e^{x+\lambda y} \sin(y) 
\\
x^*_5= -\alpha_4e^{x+\lambda y} \sin(y) + \alpha_45e^{x+\lambda y} \cos(y) 
\\
x^*=  \alpha_2c_{13} +\alpha_3d_{13}  +\alpha_4f_{13} + \alpha_5 g_{13}+ \alpha 
\\
y^*=\alpha_{1}h_{13} + \alpha_2m_{13} + \alpha_3n_{13} +\alpha_4k_{13} + \alpha_5 l_{13} + \beta.
\end{array}\]

To describe all maximal-dimensional K-orbits of $G = G^{\lambda}_{13}$, we only consider $F(\alpha_1,\dots,\alpha_5,\alpha,\beta) \in \G^*$ 
with $\alpha_4^2+\alpha^2_5 \neq 0$, the remaining parameters are arbitrary.	
Clearly, $x^*_1, x^*, y^*$ run over line $\R$, while $x^*_4, x^*_5 \in \R; \,  {x^{*}_4}^2+{x^{*}_5}^2 \neq 0$.
The coordinates $x^*_2, x^*_3, x^*_4, x^*_5$ satisfy the following equation
\[ 	\frac{x^*_2x^*_5-x^*_3x^*_4}{{x^{*}_4}^2+{x^{*}_5}^2}e^{\lambda \arctan\frac{x^*_4}{x^*_5}}=\frac{\alpha_2\alpha_5-\alpha_3\alpha_4}{{\alpha_4}^2+{\alpha_5}^2}e^{\lambda\arctan\frac{\alpha_4}{\alpha_5}}\cdot\]
Thus, $\Omega_F$ is a part of hypersurface of degree two as follows:
\[\begin{array}{l}
\Omega_F=\Bigg\{(x^*_1, x^*_2, x^*_3, x^*_4, x^*_5, x^*, y^*) \in \mathcal{G}^*: 	\frac{x^*_2x^*_5-x^*_3x^*_4}{{x^{*}_4}^2+{x^{*}_5}^2}e^{\lambda \arctan\frac{x^*_4}{x^*_5}}=\frac{\alpha_2\alpha_5-\alpha_3\alpha_4}{{\alpha_4}^2+{\alpha_5}^2}e^{\lambda\arctan\frac{\alpha_4}{\alpha_5}},\,
{x^{*}_4}^2+{x^{*}_5}^2\neq 0 \Bigg\}.\end{array}\]
This completes the proof for $G = G^{\lambda}_{13}$.

The remaining cases can be proved similarly while the matrices of $B_{F}$  with necessary and sufficient conditions for $\Rank B_F=6$ and $\exp(\ad_U)$ are given in Tables \ref{tab5.2} and \ref{tab5.3}, respectively.

 \begin{longtable}{p{.060\textwidth} p{.550\textwidth} p{.260\textwidth}}
	\caption{ $B_{F}$ and Condition of $\Rank(B_{F})=6$ }\label{tab5.2} \\
	\hline No. & $B_{F}$, \quad $F(\alpha_1,\dots,\alpha_5,\alpha,\beta) \in \mathcal{G}^*$ & Condition of $\Rank(B_{F})=6$ \endfirsthead 
	\caption{ (continued)} \\ 
	\hline No. & $B_{F}$, \quad $F(\alpha_1,\dots,\alpha_5,\alpha,\beta) \in \mathcal{G}^*$ &  Condition of $\Rank(B_{F})=6$ \\ \hline \endhead \hline
	\endlastfoot \hline
	$\mathcal{G}^{\lambda}_{1}$ & $\begin{bmatrix}
	0 & \alpha_4 & \alpha_5 & 0 & 0 & - \alpha_1 & 0 
	\\
	- \alpha_4 & 0 & 0 & 0 & 0 & \alpha_2 & 0 
	\\
	- \alpha_5 & 0 & 0 & 0 & 0 & 0 & - \alpha_3 
	\\
	0 & 0 & 0 & 0 & 0 & 0 & 0
	\\
	0 & 0 & 0 & 0 & 0 & - \alpha_5 & - \alpha_5
	\\
	\alpha_1 & -\alpha_2 & 0 & 0 & \alpha_5 & 0 & \alpha_4
	\\
	0 & 0 & \alpha_3 & 0 & \alpha_5 & -\alpha_4 & 0
	\end{bmatrix}$ &  $\alpha_5 \neq 0 \neq \alpha_2^2+\alpha_4^2$\\
	$\mathcal{G}_{5}$ & $\begin{bmatrix}
	0 & \alpha_4 & \alpha_5 & 0 & 0 & 0 & -(\alpha_1+\alpha_2)
	\\
	-\alpha_4 & 0 & 0 & 0 & 0 & 0 & -\alpha_2 
	\\
	-\alpha_5 & 0 & 0 & 0 & 0 & -\alpha_3 & 0
	\\
	0 & 0 & 0 & 0 &0 & 0 & -2\alpha_4
	\\
	0 & 0 & 0 & 0 & 0 & -\alpha_5 & -\alpha_5
	\\
	0 & 0 & \alpha_3 & 0 & \alpha_5 & 0 & 0
	\\
	\alpha_1+\alpha_2& \alpha_2 & 0 & 2\alpha_4 & \alpha_5 & 0 & 0
	\end{bmatrix} $ & $\begin{array}{l}
	\alpha_4 = 0 \neq \alpha_2\alpha_5 \\\mbox{  or } \alpha_4 \neq 0 \neq  \alpha_3^2+\alpha_5^2
	\end{array}$
	\\
	$\mathcal{G}^{\lambda}_{6}$ & $\begin{bmatrix}
	0 & \alpha_4 & \alpha_5 & 0 & 0 & -\alpha_1 & -\alpha_2 
	\\
	-\alpha_4 & 0 & 0 & 0 & 0 & -\alpha_2 & 0 
	\\
	-\alpha_5 & 0 & 0 & 0 & 0 & -\lambda\alpha_3 & -\alpha_3
	\\
	0 & 0 & 0 & 0 &0 & -2\alpha_4 & 0
	\\
	0 & 0 & 0 & 0 & 0 & -(1+\lambda)\alpha_{5} & -\alpha_5
	\\
	\alpha_1 & \alpha_2 & \lambda\alpha_3 & 2\alpha_4 & (1+\lambda)\alpha_{5} & 0 & 0
	\\
	\alpha_2 & 0 & \alpha_3 & 0 & \alpha_5 & 0 & 0
	\end{bmatrix} $ & $\begin{array}{l}
	\alpha_4 = 0 \neq \alpha_2\alpha_5 \\\mbox{  or } \alpha_4 \neq 0 \neq  \alpha_3^2+\alpha_5^2
	\end{array}$
	\\
	$\mathcal{G}_{7}$ & $\begin{bmatrix}
	0 & \alpha_4 & \alpha_5 & 0 & 0 & 0 & -\alpha_1
	\\
	-\alpha_4 & 0 & 0 & 0 & 0 & -\alpha_2 & -(\alpha_2+\alpha_5) 
	\\
	-\alpha_5 & 0 & 0 & 0 & 0 & -\alpha_3 & 0
	\\
	0 & 0 & 0 & 0 &0 & -\alpha_4 & -2\alpha_4
	\\
	0 & 0 & 0 & 0 & 0 & -\alpha_5 & -\alpha_5
	\\
	0 & \alpha_2 & \alpha_3 & \alpha_4 & \alpha_5 & 0 & 0
	\\
	\alpha_1 & \alpha_2+\alpha_5 & 0 & 2\alpha_4 & \alpha_5 & 0 & 0
	\end{bmatrix} $ & $\begin{array}{l}
	\alpha_4^2+\alpha^2_5\neq0 \\\mbox{  except to  }\\ \alpha_3=\alpha_5=0\neq \alpha_4
	\end{array}$
	\\
	$\mathcal{G}^{\lambda}_{8}$ & $\begin{bmatrix}
	0 & \alpha_4 & \alpha_5 & 0 & 0 & -\alpha_1 & 0
	\\
	-\alpha_4 & 0 & 0 & 0 & 0 & -\gamma & -(\alpha_2+\alpha_5) 
	\\
	-\alpha_5 & 0 & 0 & 0 & 0 & -\lambda\alpha_3 & \alpha_3
	\\
	0 & 0 & 0 & 0 &0 & -\delta & -\alpha_4
	\\
	0 & 0 & 0 & 0 & 0 & -\xi & -\alpha_5
	\\
	\alpha_1 & \gamma & \lambda\alpha_3 & \delta & \xi & 0 & 0
	\\
	0 & \alpha_2+\alpha_5 & \alpha_3 & \alpha_4 & \alpha_5 & 0 & 0
	\end{bmatrix} $ & $\begin{array}{l}
	\alpha_4^2+\alpha^2_5\neq0 \\\mbox{  except to  }\\ \alpha_3=\alpha_5=0\neq \alpha_4
	\end{array}$
	\\ \hline
	$\mathcal{G}_{11}$ & $\begin{bmatrix}
	0 & \alpha_4 & \alpha_5 & 0 & 0 & 0 & -\alpha_1 
	\\
	-\alpha_4 & 0 & 0 & 0 & 0 & -\alpha_2 & -\alpha_3
	\\
	-\alpha_5 & 0 & 0 & 0 & 0 & -\alpha_3 & 0 
	\\
	0 & 0 & 0 & 0 & 0 & -\alpha_4 & -(\alpha_4+\alpha_5 )
	\\
	0 & 0 & 0 & 0 &0 & -\alpha_5 & -\alpha_5
	\\
	0 & \alpha_2 & \alpha_3 & \alpha_4 & \alpha_5 & 0 & 0
	\\
	\alpha_1 & \alpha_3 & 0 & \alpha_4+\alpha_5  &  \alpha_5 & 0 & 0
	\end{bmatrix} $ & $ \alpha_4 \neq 0 \neq {\alpha_3}^2 + {\alpha_5}^2 $ \\
	$\mathcal{G}^{\lambda_1,\lambda_2}_{14}$ & $\begin{bmatrix}
	0 & \alpha_4 & \alpha_5 & 0 & 0 & -\alpha_1  & 0
	\\
	-\alpha_4 & 0 & 0 & 0 & 0 & -\lambda_1\alpha_2 & -\beta
	\\
	-\alpha_5 & 0 & 0 & 0 & 0 & -\lambda_{1}\alpha_3 & -\gamma
	\\
	0 & 0 & 0 & 0 & 0 & -\delta & -\xi
	\\
	0 & 0 & 0 & 0 &0 & -\sigma& -\chi
	\\
	\alpha_{1} & \lambda_{1}\alpha_2 & \lambda_{1}\alpha_3 & \delta&  \sigma & 0 & 0
	\\
	0 & \beta & \gamma& \xi  & \chi & 0 & 0
	\end{bmatrix} $ & $
	\alpha^{2}_{4}+\alpha^{2}_{5} \neq 0 $ \\
	& where $\begin{array}{ll}
	\beta=\lambda_{2}\alpha_2+\alpha_3,& \gamma=-\alpha_2+\lambda_{2}\alpha_{3}  \\
	\delta=(1+\lambda_{1})\alpha_4, & \sigma=(1+\lambda_{1})\alpha_5
	\\
	\xi= \lambda_{2}\alpha_{4}+\alpha_{5}, & \chi= -\alpha_{4}+\lambda_{2}\alpha_{5}
	\end{array}$&\\
	$\mathcal{G}_{15}$ & $\begin{bmatrix}
	0 & \alpha_4 & \alpha_5 & 0 & 0 & 0 & 0 
	\\
	-\alpha_4 & 0 & 0 & 0 & 0 & -\alpha_3 & -\beta
	\\
	-\alpha_5 & 0 & 0 & 0 & 0 & \alpha_2 & -\gamma
	\\
	0 & 0 & 0 & 0 & 0 & -\alpha_5 & -\alpha_4
	\\
	0 & 0 & 0 & 0 &0 & \alpha_4 & -\alpha_5
	\\
	0 & \alpha_3 & -\alpha_2 & \alpha_5 &  -\alpha_4 & 0 & 0
	\\
	0 & \beta & \gamma & \alpha_4 & \alpha_5 & 0 & 0
	\end{bmatrix} $ & $
	\alpha^{2}_{4}+\alpha^{2}_{5} \neq 0 $ \\
	& where $ \beta=\alpha_2+\alpha_5, \gamma=\alpha_3-\alpha_4 $&\\	
	$\mathcal{G}^{\lambda}_{16}$ & $\begin{bmatrix}
	0 & \alpha_4 & \alpha_5 & 0 & 0 & 0 & 0 
	\\
	-\alpha_4 & 0 & 0 & 0 & 0 & -\beta & -\gamma
	\\
	-\alpha_5 & 0 & 0 & 0 & 0 & \alpha_2 & -\sigma
	\\
	0 & 0 & 0 & 0 & 0 & -\alpha_5 & -\alpha_4
	\\
	0 & 0 & 0 & 0 &0 & \alpha_4 & -\alpha_5
	\\
	0 & \beta & -\alpha_2 & \alpha_5 &  -\alpha_4 & 0 & 0
	\\
	0 & \gamma & \sigma & \alpha_4 & \alpha_5 & 0 & 0
	\end{bmatrix} $ & $
	\alpha^{2}_{4}+\alpha^{2}_{5} \neq 0 $ \\
	& where $\begin{array}{ll}
	\beta=\alpha_3+\alpha_5,& \gamma=\alpha_2+\lambda\alpha_5,\\ \sigma=\alpha_3-\lambda\alpha_4 &
	\end{array}$&\\
	\hline
\end{longtable}

\begin{longtable}{p{.060\textwidth}  p{.800\textwidth}}
	\caption{  $\exp(\ad_U)$ of considered Lie groups}\label{tab5.3} \\
	\hline No. & $\exp(\ad_U), \; U=\sum_{i=1}^{5}x_iX_i+xX+yY  \; (x_i, x, y \in \R)$  \endfirsthead 
	\caption{(continued)} \\ 
	\hline No. & $\exp(\ad_U), \; U=\sum_{i=1}^{5}x_iX_i+xX+yY \; (x_i, x, y \in \R)$   \\ \hline \endhead \hline
	\endlastfoot \hline
	$ G_1^{\lambda} $	& $ 	\begin{bmatrix}
	e^{x} & 0     & 0 & 0 & 0 & -x_1p & 0
	\\
	0      & e^{-x} & 0 & 0 & 0 &x_2e^{-x} p & 0 
	\\
	0      & 0     & e^{y} & 0 & 0 & 0 & -x_3q
	\\
	-x_2p & x_1e^{-x}p& 0 & 1 &0 & a_{1} & \lambda x
	\\
	-x_3e^xq & 0 & x_1e^{y}p & 0 & e^{x+y} & b_{1} &c_{1}
	\\
	0 & 0 & 0 & 0 & 0 & 1 & 0
	\\
	0 & 0 & 0 & 0 & 0 & 0 & 1
	\end{bmatrix} $\\
	& where $  \begin{cases}
	p=\dfrac{e^{x}-1}{x}, \quad 
	q=\dfrac{e^{y}-1}{y}\\
	a_{1}: \text{ an expression containing parameters } x_{1}, x_{2}, x, y\\
	b_{1}, c_{1}:  \text{ expressions  containing parameters } x_{1}, x_{3}, x_{5},  x, y.
	\end{cases}$ \\
	$G_5 $ & $\begin{bmatrix}
	e^{y} & 0 & 0 & 0 & 0 & 0 & -x_1q
	\\
	e^{y}y & e^{y} & 0 & 0 & 0 &0 & c_{5}
	\\
	0 & 0 & e^{x} & 0 & 0 & -x_3p & 0 
	\\
	a_{5}&x_1e^yq & 0& e^{2y} &0 &0 & d_{5}
	\\
	-x_3e^{y}p & 0 & x_1e^{x}q & 0 & e^{x+y} & b_{5} & f_{5}
	\\
	0 & 0 & 0 & 0 & 0 & 1 & 0
	\\
	0 & 0 & 0 & 0 & 0 & 0 & 1
	\end{bmatrix} $ \\
	& where $ \begin{cases}
	a_{5}, c_{5}: \text{ expressions  containing parameters } x_{1}, x_{2}, x,  y\\
	d_{5}: \text{ an expression containing parameters } x_{1},  x_{4},  y\\
	b_{5}, f_{5}: \text{ expressions  containing parameters } x_{1},  x_{3}, x_{5}, x,  y.
	\end{cases} $\\
	$ G_6^{\lambda} $ & $\begin{bmatrix}
	e^{x} & 0 & 0 & 0 & 0  & -x_1p & 0 
	\\
	e^{x}y & e^{x} & 0 & 0 & 0 &  b_{6}    & -x_1p 
	\\
	0 & 0 & e^{\lambda x+y} & 0 & 0 & -\lambda x_3r &  -x_3r
	\\
	a_{6}     &x_1e^xp & 0& e^{2x} &0 &  c_{6}   &  f_{6}   
	\\
	-x_3e^xr & 0 & x_1e^{\lambda x+y}p & 0 & e^{\xi} &  d_{6}  &     g_{6}
	\\
	0 & 0 & 0 & 0 & 0 & 1 & 0
	\\
	0 & 0 & 0 & 0 & 0 & 0 & 1
	\end{bmatrix} $ \\
	& where $ \begin{cases}
	\xi = \lambda x+x+y, \quad 
	r=\dfrac{e^{\lambda x +y}-1}{\lambda x +y}\\
	a_{6}, b_{6}: \text{ expressions  containing parameters } x_{1}, x_{2}, x,  y\\
	c_{6}: \text{ an expression  containing parameters } x_{1},  x_{4}, x,  y\\
	f_{6}: \text{ an expression  containing parameters } x_{1}, x\\
	d_{6}, g_{6}: \text{ expressions containing parameters } x_{1},  x_{3}, x_{5}, x,  y.
	\end{cases} $\\
	$G_7 $ & $\begin{bmatrix}
	e^{y} & 0 & 0 & 0 & 0 & 0 & -x_1q
	\\
	0 & e^{x+y} & 0 & 0 & 0 & b_{7} & b_{7} 
	\\
	0 & 0 & e^{x} & 0 & 0 & -x_3p  & 0 
	\\
	a_{7}&x_1e^{x+y}q & 0& e^{x+2y} &0 &c_{7} &f_{7}
	\\
	-x_3e^{y}p & ye^{x+y}& x_1e^{x} & 0 & e^{x+y} &d_{7} & g_{7}
	\\
	0 & 0 & 0 & 0 & 0 & 1 & 0
	\\
	0 & 0 & 0 & 0 & 0 & 0 & 1
	\end{bmatrix} $\\
	& where $ \begin{cases}
	a_{7}, b_{7}: \text{ expressions containing parameters }  x_{2}, x,  y\\
	c_{7}, f_{7}: \text{ expressions containing parameters } x_{1}, x_{2}, x_{4}, x,  y\\
	d_{7}, g_{7}: \text{ expressions  containing parameters } x_{1}, x_{2}, x_{3}, x_{5}, x,  y.
	\end{cases} $\\
	$G_8^{\lambda}  $ & $ \begin{bmatrix}
	e^{x} & 0 & 0 & 0 & 0 & -x_1p & 0 
	\\
	0 & e^{\xi} & 0 & 0 & 0 & -(1+\lambda)x_2s & -x_2s \\
	0 & 0 & e^{ \lambda x+y} & 0 & 0 & -\lambda x_3r  & -x_3r
	\\
	-x_2e^{x}s&x_1e^{\xi}p & 0& e^{x+\xi} &0 &a_{8} &c_{8}
	\\
	-x_3e^{x}r & ye^{\xi}& x_1e^{ \lambda x+y}p & 0 & e^{\xi} &b_{8} & d_{8}
	\\
	0 & 0 & 0 & 0 & 0 & 1 & 0
	\\
	0 & 0 & 0 & 0 & 0 & 0 & 1
	\end{bmatrix} $\\
	& where $ \begin{cases}
	s=\dfrac{e^{\lambda x +x +y}-1}{\lambda x + x +y}\\
	a_{8}, c_{8}: \text{ expressions  containing parameters } x_{1}, x_{2}, x_{4}, x,  y\\
	b_{8}, d_{8}: \text{ expressions  containing parameters } x_{1}, x_{2}, x_{3}, x_{5}, x,  y.
	\end{cases} $\\
	$G_{11}  $ & $\begin{bmatrix}
	e^{y} & 0 & 0 & 0 & 0 & 0 &-x_1q\\
	0 & e^{x} & 0 & 0 & 0 &-x_2p & 0 \\
	0 & ye^{x} &e^{x} & 0 & 0 &  b_{11} &-x_2p\\
	-x_2e^{y}p &x_1e^{x}q & 0 & e^{x+y} & 0  & c_{11} & f_{11} \\
	a_{11} & x_1e^{x}(e^{y}-1)& x_1e^{x}q &ye^{x+y} & e^{x+y}  & d_{11} & g_{11}\\
	0 & 0 & 0 & 0 & 0 & 1 & 0\\
	0 & 0 & 0 & 0 & 0 & 0 & 1
	\end{bmatrix}  $\\
	& where $ \begin{cases}
	a_{11}, b_{11}: \text{ expressions  containing parameters }  x_{2}, x_{3}, x,  y\\
	c_{11}, f_{11}: \text{ expressions  containing parameters }  x_{1}, x_{2},  x_{4}, x,  y\\
	d_{11}, g_{11}: \text{ expressions  containing parameters }  x_{1}, x_{2}, x_{3}, x_{4}, x_{5}, x,  y.
	\end{cases} $\\
	$G_{14}^{\lambda_1,\lambda_2}  $ & $\begin{bmatrix}
	e^{x} & 0 & 0 & 0 & 0 & c_{14} & 0 
	\\
	0 & e^{\zeta}\cos(y) & -e^{\zeta}\sin(y) & 0 & 0 & d_{14} & m_{14}
	\\
	0 & e^{\zeta}\sin(y) & e^{\zeta}\cos(y) & 0 & 0 & f_{14} & n_{14} 
	\\
	a_{14} & x_1e^{\zeta}\cos(y) & -x_1e^{\zeta}\sin(y) & e^{\zeta+x}\cos(y) & -e^{\zeta+x}\sin(y) & g_{14} & k_{14}
	\\
	b_{14} & x_1e^{\zeta}\sin(y)  & x_1e^{\zeta}\cos(y)  & e^{\zeta+x}\sin(y) & e^{\zeta+x}\cos(y) & h_{14} & l_{14}
	\\
	0 & 0 & 0 & 0 & 0 & 1 & 0
	\\
	0 & 0 & 0 & 0 & 0 & 0 & 1
	\end{bmatrix}  $\\
	&where $ \begin{cases}
	\zeta=\lambda_1x+\lambda_2y\\
	c_{14}: \text{ an expression containing parameters } x_{1}, x,  y\\
	a_{14}, b_{14}, d_{14}, f_{14},  m_{14}, n_{14}: \text{ expressions  containing parameters }  x_{2}, x_{3}, x,  y\\
	g_{14}, h_{14}, k_{14}, l_{14}: \text{ expressions  containing parameters }  x_{1}, x_{2}, x_{3}, x_{4}, x_{5}, x,  y.
	\end{cases} $\\
	$G_{15}  $ & $\begin{bmatrix}
	1 & 0 & 0 & 0 & 0 & 0 & 0 
	\\
	0 & e^{y}\cos(x) & -e^{y}\sin(x) & 0 & 0 & c_{15} & m_{15} 
	\\
	0 & e^{y}\sin(x) & e^{y}\cos(x) & 0 & 0 & d_{15} & n_{15} 
	\\
	a_{15} & p_{15} & q_{15} & e^{y}\cos(x) &-e^{y}\sin(x) & f_{15} & k_{15}
	\\
	b_{15} & r_{15} & p_{15} & e^{y}\sin(x) & e^{y}\cos(x) & g_{15} & l_{15}
	\\
	0 & 0 & 0 & 0 & 0 & 1 & 0
	\\
	0 & 0 & 0 & 0 & 0 & 0 & 1
	\end{bmatrix}  $\\
	&where $ \begin{cases}
	p_{15}= (x_1\cos(x)-y\sin(x))e^{y} 
	\\
	q_{15}=(-x_1\sin(x)-y\cos(x))e^{y}
	\\
	r_{15}= (x_1\sin(x)+y\cos(x))e^{y}   
	\\
	a_{15}, b_{15}, c_{15}, d_{15},  m_{15}, n_{15}: \text{ expressions  containing parameters }  x_{2}, x_{3}, x,  y\\
	f_{15}, g_{15}, , k_{15}, l_{15}: \text{ expressions  containing parameters } x_{1}, x_{2}, x_{3}, x_{4}, x_{5}, x,  y.
	\end{cases} $\\
	$ G^{\lambda}_{16} $ & $\begin{bmatrix}
	1 & 0 & 0 & 0 & 0 & 0 & 0 
	\\
	0 & e^{y}\cos(x) & -e^{y}\sin(x) & 0 & 0 & c_{16} & m_{16} 
	\\
	0 & e^{y}\sin(x) & e^{y}\cos(x) & 0 & 0 & d_{16} & n_{16} 
	\\
	a_{16} & p_{16} & q_{16} & e^{y}\cos(x) &-e^{y}\sin(x) & f_{16} & k_{16}
	\\
	b_{16} & r_{16} & p_{16} & e^{y}\sin(x) & e^{y}\cos(x) & g_{16} & l_{16}
	\\
	0 & 0 & 0 & 0 & 0 & 1 & 0
	\\
	0 & 0 & 0 & 0 & 0 & 0 & 1
	\end{bmatrix}  $\\
	&where $ \begin{cases}
	p_{16}= \left( x_1\cos(x)-\dfrac{1}{2}x\sin(x)\right) e^{y} -\lambda y\sin(x)e^y
	\\
	q_{16}=\left( -x_1\sin(x)-\dfrac{1}{2}x\cos(x)+\dfrac{1}{2}\sin(x)\right) e^{y}-\lambda y\cos(x)e^y
	\\
	r_{16}= \left( x_1\sin(x)+\dfrac{1}{2}x\cos(x)+\dfrac{1}{2}\sin(x)\right) e^{y}+\lambda y\cos(x)e^y  
	\\
	a_{16}, b_{16}, c_{16}, d_{16},  m_{16}, n_{16}: \text{ expressions  containing parameters }  x_{2}, x_{3},  x,  y\\
	f_{16}, g_{16}, k_{16}, l_{16}: \text{ expressions  containing parameters } x_{1}, x_{2}, x_{3}, x_{4}, x_{5}, x,  y.
	\end{cases} $\\
	\hline
\end{longtable}
\end{proof}

\begin{rem}[Geometry of maximal-dimensional K-orbits]\label{RemarkOrbits}
	Due to the picture of K-orbits of maximal dimension of considered Lie groups, we have some geometric characteristics as~follows.
	\begin{enumerate}
		\item All K-orbits of $G \in \A$ are strictly homogeneous simplectic submanifolds of $\G^*$ (see \cite[\S15.1]{kir}). 
		Moreover, if $G$ is exponential, i.e.  $G \in \A_1$, 
		all K-orbits of $G$ are homeomorphic to Euclidean spaces (see \cite[\S15.3]{kir}). 
		It can be easily verified by using the picture of K-orbits in Theorem \ref{picture K-orbit}.

		\item For each $G \in \A_1$,  
		there are exactly two types of maximal-dimensional K-orbits. 
		\begin{enumerate}[2.1]
			\item For $G = G_1^{\lambda}$, Type 1 has exactly four K-orbits but Type 2 has an infinite family of K-orbits. Namely,
			\begin{itemize}
				\item Each orbit of Type 1 is a quarter of hyperplane $\{x_4^* = 0\}$ in $\G^*$ which is obtained when ``cutting" 
				it by two hyperplanes $\{x_2^* = 0\}$ and $\{x_5^* = 0\}$. In fact, K-orbits of Type 1 are four connected components of 
				$\{x_4^* = 0\} \setminus \left(\{x_2^* = 0\} \cup \{x_5^* = 0\}\right)$.
				
				\item Each orbit of Type 2 is a part of hyperplane $\{x_4^* = c\}$ ($c$ is a non-zero constant) which is obtained 
				when ``cutting" it by two hyperplanes $\{x_4^* = 0\}$ and $\{x_5^* = 0\}$.
			\end{itemize}
		
		\item For $G=G_{12}^\lambda$, Type 1 has exactly four K-orbits but Type 2 has an infinite family of K-orbits. Namely,
		\begin{itemize}
			\item Each orbit of Type 1 is a quarter of hyperplane $\{x_5^* = 0\}$ in $\G^*$ which is obtained when ``cutting" 
			it by two hyperplanes $\{x_3^* = 0\}$ and $\{x_4^* = 0\}$. In fact, K-orbits of Type 1 are four connected components 
			of $\{x_5^* = 0\} \setminus \left(\{x_3^* = 0\} \cup \{x_4^* = 0\}\right)$.  
			
			\item Each orbit of Type 2 is a half of an algebraic hypersurface in $\G^*$ which is obtained when ``cutting" it by hyperplane $\{x_5^* = 0\}$.
		\end{itemize}
					 
			\item For each $G \in \A_1 \setminus \{G_1^{\lambda}, G_{12}^\lambda \}$, Type 1 has a finite family of K-orbits but Type 2 has an infinite family of K-orbits. Namely,
			\begin{itemize}
				\item Type 1:
				\begin{itemize}
					\item [(i)] If $G \in \{G_2, G_{11}\}$, each orbit of Type 1 is a quarter of hyperplane $\{x_5^* = 0\}$ in $\G^*$ which is obtained when ``cutting" 
					it by two hyperplanes $\{x_3^* = 0\}$ and $\{x_4^* = 0\}$. 
					\item [(ii)] If $G \in \left\lbrace G_7, G^{\lambda}_{8}\right\rbrace $, each orbit of Type 1 is either a haft of hyperplane $\{x_4^* = 0\}$ in $\G^*$ which is obtained when ``cutting" 
					it by  hyperplane $\{x_5^* = 0\}$ or a quarter of hyperplane $\{x_5^* = 0\}$ in $\G^*$ which is obtained when ``cutting" 
					it by two hyperplanes $\{x_3^* = 0\}$ and $\{x_4^* = 0\}$. 
					\item [(iii)] If $G \in \left\lbrace  G_3, G_4^{\lambda_1\lambda_2}, G_5, G_6^{\lambda}, G_9, G_{10}^{\lambda}\right\rbrace $, each orbit of Type 1 is either a quarter  of hyperplane $\{x_4^* = 0\}$ in $\G^*$ which is obtained when ``cutting" 
					it by two hyperplanes $\{x_2^* = 0\}$ and $\{x_5^* = 0\}$ or a quarter of hyperplane $\{x_5^* = 0\}$ in $\G^*$ which is obtained when ``cutting" 
					it by two hyperplanes $\{x_3^* = 0\}$ and $\{x_4^* = 0\}$.
				\end{itemize}
				
				
				\item Each orbit of Type 2 is a part of a (transcendental or algebraic) hypersurface in $\G^*$ which is obtained 
				when ``cutting" it by two hyperplanes $\{x_4^* = 0\}$ and $\{x_5^* = 0\}$.
			\end{itemize}

			\item Now we consider in more detail one concrete group mentioned at the beginning of this item 
			(i.e. groups have two types of maximal-dimensional K-orbits), such as $G = G_4^{\lambda_1\lambda_2}$. 
			Denote by $\sim$ the equivalence relation which is defined on $\G^*$ as follows: $F_1 \sim F_2 \Leftrightarrow \Omega_{F_1} = \Omega_{F_2}$. 
			Then each K-orbit of $G$ can be seen as an element of the space $\G^*/\hspace{-5pt} \sim$ with quotient topology. 
			Assume~that
			\[
				F(\alpha_1,\alpha_2,\alpha_3, \alpha_4, \alpha_5,\alpha,\beta), F_0 (\alpha_1,\alpha_2,\alpha_3, 0, \alpha_5,\alpha,\beta) \in \G^*
			\]
			with $\alpha_i \neq 0$ for $i = 1, \ldots, 5$. Note that $\Omega_F$ is of Type 2 while $\Omega_{F_0}$ is of Type 1. 
			By letting $\alpha_4 \rightarrow 0$, we have
			\[
				F(\alpha_1,\alpha_2,\alpha_3, \alpha_4, \alpha_5,\alpha,\beta) \rightarrow F_0 (\alpha_1,\alpha_2,\alpha_3, 0,\alpha_5,\alpha,\beta).
			\]
			However, it is clear that the orbit $\Omega_F$ of $F$ does not converge to $\Omega_{F_0}$ in $\G^*/\hspace{-4pt}\sim$. 
			In other words, K-orbits of Type 1 seem to play a role as ``singularity'' and should be excluded from the family of K-orbits in general position. 
			Thus, the family of K-orbits in general position are only K-orbits of Type 2. For all remaining groups $G \in \A_1 \setminus \{G_4^{\lambda_1\lambda_2}\}$,
			we also have absolutely similar situations. In the following, we will formulate a definition for general position K-orbits.
		\end{enumerate}
		
		\item For each group $G \in \A_2$ 
		there is an infinite family of maximal-dimensional K-orbits. All these orbits are of the same type. 
		Namely, each of them is always a certain (transcendental or algebraic) hypersurfaces in $\G^*$. 
		Of course, all K-orbits of $G \in \A_2$
		are non-singular, and they are all in general~position.	
	\end{enumerate}
\end{rem}

Each maximal-dimensional K-orbit in the general position will be simply called a \emph{generic} K-orbit. Namely, we have the following definition.
		
\begin{defn}\label{D4}
Let $G \in \A$ ($= \A_1 \sqcup \A_2$). A six-dimensional K-orbit $\Omega$ of $G$ is called a \emph{generic K-orbit} if either $G \in \A_2$, or $G \in \A_1$  
and $\Omega$ is of Type 2.
\end{defn}

\begin{rem}[\textbf{Geometric characteristics of generic K-orbits}]\label{RemarkGenericOrbits}
	We will now examine more carefully the family $\F_G$ of all generic K-orbits as in Definition \ref{D4}. 
	For convenience, we set
	\[
	\begin{array}{l l}
	V_G := \cup \left\lbrace \Omega \, | \, \Omega \in \F_G \right\rbrace, & \text{for } G \in \A.
	\end{array}
	\] 
	By the picture of generic K-orbits of each group $G$, we can divide sixteen families of Lie groups in $\A$ into 
	three subfamilies such that each subfamily has an almost similar picture of generic K-orbits. 
	In the following, we will describe the geometric characteristics of the generic K-orbits for each subfamily.
	
	\begin{enumerate}
		\item For $G \in \A_1 \setminus \{G^\lambda_{12}\}$, 
		we have
		\[
		V_G = V_1: = \left\lbrace (x^*_1, \ldots, x^*_5, x^*, y^*) \in \G^* \, \big| \, x^*_4 x^*_5 \neq 0 \right\rbrace \subset \G^* \equiv \R^7.
		\]
		In fact,
		\begin{equation}\label{FoliatedManifold}
		V_1 \equiv \R^3 \times \left(\R \setminus \{0\}\right)^2 \times \R^2 = {V_1}_{++} \sqcup {V_1}_{-+} \sqcup {V_1}_{- -} \sqcup {V_1}_{+-}
		\end{equation}
		where
		\begin{equation}\label{SS1}
		{V_1}_{++} := \R^3 \times \R_+ \times \R_+ \times \R^2, \quad 
		{V_1}_{-+} := \R^3 \times \R_- \times \R_+ \times \R^2
		\end{equation}
		\begin{equation}\label{SS2}
		{V_1}_{--} := \R^3 \times \R_- \times \R_- \times \R^2, \quad 
		{V_1}_{+-} := \R^3 \times \R_+ \times \R_- \times \R^2
		\end{equation}
		in which $\R_+: =\{x \in \R: x > 0\}$ and $\R_-: = \{x \in \R :  x < 0\}$.
		
		\item If $G=G^\lambda_{12}$, we have
		\[
		V_G = V_2 := \left\lbrace (x^*_1, \dots, x^*_5, x^*, y^*) \in \G^* \, \big| \, x^*_5 \neq 0 \right\rbrace \subset \G^* \equiv \R^7.
		\]
		In fact,
		\begin{equation}\label{FoliatedManifold1}
		V_2 \equiv \R^4 \times \left(\R \setminus \{0\}\right) \times \R^2 = {V_2}_+ \sqcup {V_2}_- 
		\end{equation}
		where
		\begin{equation}\label{SS3}
		{V_2}_+ := \R^4 \times \R_{+} \times \R^2, \quad {V_2}_- : = \R^4 \times \R_- \times \R^2.
		\end{equation}
		
		\item For $G \in \A_2$, 
		we have
		\[
		V_G = V_3 := \left\lbrace (x^*_1, \dots, x^*_5, x^*, y^*) \in \G^* \, \big| \, {x_4^*}^2 + {x_5^*}^2 \neq 0 \right\rbrace \subset \G^* \equiv \R^7.
		\]
		In fact
		\begin{equation}\label{FoliatedManifold2} 
		V_3 \equiv \R^3 \times \left(\R^2 \setminus \{(0,0)\} \right) \times \R^2 \subset \G^* \equiv \R^7.  
		\end{equation}
	\end{enumerate}
	Obviously, all $V_G$ above are open submanifolds of $\G^* \equiv \R^7$ (with natural differential structure), 
	and each K-orbit $\Omega$ from $\F_G$ is a six-dimensional submanifold of $V_G$. 
	In the next section, we will prove that $\F_G$ forms a measurable foliation on the open submanifold $V_G$.
\end{rem}

{\small  \begin{landscape}
	\begin{longtable}{|p{.18\textwidth}| p{.125\textwidth}|p{.1\textwidth}| p{.5\textwidth}|p{.20\textwidth}|p{.10\textwidth}|}
		\caption{The picture of maximal-dimensional K-orbits}\label{tab2} \\
		\hline \multicolumn{2}{|c|}{Conditions for $\dim\Omega_F=6$} & $G$ & 
			$\Omega_F=\left\lbrace F_{U}(x_1^*, \dots, x_5^*, x^*, y^*)\right\rbrace$ & Geometric forms & Types \endfirsthead
		\caption*{Table 2 (continued)} \\
		\hline \multicolumn{2}{|c|}{Conditions for $\dim\Omega_F=6$} & $G$ & 
			$\Omega_F=\left\lbrace F_{U}(x_1^*, \dots, x_5^*, x^*, y^*)\right\rbrace$ & Geometric forms & Types \\	 
		\hline \endhead 
		
		\hline \multirow{4}{*}{$\begin{array}{l} \alpha_5 \neq 0 \\ \alpha_2^2+\alpha_4^2 \neq 0 \\ \alpha_1, \alpha_3, \alpha, \beta \in \R \end{array}$} &
			$\alpha_2 \alpha_5 \neq 0 = \alpha_4$ & \multirow{4}{*}{$G^\lambda_1$} & 
			$x_4^*=0$, $\alpha_2 x^*_2>0$, $\alpha_5 x^*_5 >0$, $x_1^*, x_3^*, x^*, y^* \in \R$ & 
			a part of hyperplane $\{x^*_4 = 0\}$ & singularity \\
		\cline{2-2} \cline{4-6} & $\alpha_4 \alpha_5 \neq 0$ & & $x^*_4=\alpha_4$, $\alpha_5x^*_5 >0$, $x_1^*,x_2^*,  x_3^*, x^*, y^* \in \R$ & 
			a part of hyperplane $\{x^*_4 = \alpha_{4}\}$ & generic \\
		
		\hline \multirow{7}{*}{$\begin{array}{l} \alpha_4 \neq 0, \\ \alpha_3^2+\alpha_5^2 \neq 0, \\ \alpha_1,\alpha_2, \alpha, \beta \in \R \end{array}$} &
			$\alpha_3 \alpha_4 \neq 0 = \alpha_5$ & $G_2$, $G_{11}$ & $x_5^*=0$, $\alpha_3 x^*_3>0$, $\alpha_4 x^*_4 >0$, $x_1^*, x_2^*, x^*, y^* \in \R$ &
			a part of hyperplane $\{x^*_5 = 0\}$ & singularity \\
		\cline{2-6} & \multirow{4}{*}{$\alpha_4 \alpha_5 \neq 0$} & $G_2$ & 
			$x^*_2-\frac{x^*_3 x^*_4}{x^*_5} = \alpha_2-\frac{\alpha_3\alpha_4}{\alpha_5}$,
				$\alpha_4 x^*_4>0, \alpha_5 x^*_5 >0, x_1^*, x^*, y^* \in \R$ & 
			a part of an algebraic hypersurface & generic \\
		\cline{3-6} & & $G_{11}$ & 
			$\left(\frac{x^*_2}{x_5}-\frac{x^*_3x^*_4}{x^{*2}_5} \right) e^{\frac{x^*_4}{x^*_5}} 
				= \left(\frac{\alpha_2}{\alpha_5}-\frac{\alpha_3\alpha_4}{\alpha^2_5} \right) e^{\frac{\alpha_4}{\alpha_5}}$,
			$\alpha_4 x^*_4>0$, $\alpha_5 x^*_5 >0, x_1^*, x^*, y^* \in \R$ &
			a part of a transcendental hypersurface & generic \\
		
		\hline \multirow{10}{*}{$\begin{array}{l} \alpha_4^2+\alpha_5^2 \neq 0, \\ \text{except for} \\ \alpha_3=\alpha_5=0 \neq \alpha_4, \\
			\alpha_1,\alpha_2, \alpha, \beta \in \R \end{array}$} & $\alpha_4 =0\neq\alpha_5$ & \multirow{4}{*}{$G_7$, $G_8$} &
			$x^*_4 = 0$, $\alpha_5 x^*_5 >0$, $x_1^*, x_2^*, x_3^*, x^*, y^* \in \R$ & a part of hyperplane $\{x^*_4 = 0\}$ & singularity \\
		\cline{2-2} \cline{4-6} & $\alpha_3\alpha_4\neq0=\alpha_5$ & & 
			$x_5^*=0$, $\alpha_3 x^*_3>0$, $\alpha_4 x^*_4 >0$, $x_1^*, x_2^*, x^*, y^* \in \R$ & a part of hyperplane $\{x^*_5 = 0\}$ & singularity \\
		\cline{2-6} & \multirow{7}{*}{$\alpha_4\alpha_5\neq0$} & \multirow{3}{*}{$G_7$} & 
			$\frac{x^*_2}{x^*_5} - \frac{x^*_3 x^*_4}{{x^*_5}^2}-\ln \vert x^*_4\vert + \ln \vert x^*_5 \vert
			=\frac{\alpha_2}{\alpha_5} - \frac{\alpha_3\alpha_4}{\alpha^2_5} - \ln\vert\alpha_4\vert +  \ln\vert\alpha_5\vert$,
			$\alpha_4 x^*_4 > 0$, $\alpha_5 x^*_5 >0$, $x_1^*,  x^*, y^* \in \R$ & a part of a transcendental hypersurface & generic \\
		\cline{3-6} & & \multirow{4}{*}{$G_8^\lambda$} & 
			$\frac{x^*_2}{x^*_5} - \frac{x^*_3 x^*_4}{{x^*_5}^2} -(2+\lambda)\ln\vert x^*_5\vert 
			+ (1+\lambda)\ln\vert x^*_4\vert = \frac{\alpha_2}{\alpha_5} -\frac{\alpha_3\alpha_4}{\alpha^2_5} 
			- (2+\lambda)\ln\vert\alpha_5\vert + (1+\lambda)\ln\vert\alpha_4\vert$, 
			$\alpha_4 x^*_4 > 0$, $\alpha_5 x^*_5 >0$, $x_1^*,  x^*, y^* \in \R$ & a part of a transcendental hypersurface & generic \\
		
		\hline \multirow{18}{*}{$\begin{array}{l}\alpha_4 = 0 \neq \alpha_2\alpha_5, \text{or } \\ \alpha_4 \neq 0 \neq \alpha_3^2+\alpha_5^2,\\
			\alpha_1, \alpha, \beta \in \R \end{array}$} & $\alpha_4 = 0 \neq \alpha_2\alpha_5$ & 
			$G_3$, $G_4^{\lambda_1\lambda_2}$, $G_5$, $G_6^\lambda$, $G_9$, $G_{10}^\lambda$
			& $x_4^*=0$, $\alpha_2 x^*_2>0$, $\alpha_5 x^*_5 >0$, $x_1^*, x_3^*, x^*, y^* \in \R$ & a part of hyperplane $\{x^*_4 = 0\}$ & singularity \\
		\cline{2-2} \cline{4-6} & $\alpha_5 = 0 \neq \alpha_3\alpha_4$ & & $x_5^*=0$, $\alpha_3 x^*_3>0$, $\alpha_4 x^*_4 >0$, $x_1^*, x_2^*, x^*, y^* \in \R$
			&  part of hyperplane $\{x^*_5 = 0\}$ & singularity \\
		\cline{2-6} & & $G_3$ & $\frac{x^*_2}{x^*_4} - \frac{x^*_3}{x^*_5} = \frac{\alpha_2}{\alpha_4} - \frac{\alpha_3}{\alpha_5}$, 
			$\alpha_4 x^*_4>0$, $\alpha_5 x^*_5 >0$, $x_1^*, x^*, y^* \in \R$ & a part of an algebraic hypersurface & generic  \\
		\cline{3-6} & & $G_4^{\lambda_1\lambda_2}$ & $\left(x^*_2-\frac{x^*_3x^*_4}{x^*_5}\right)
			\frac{{x^*_4}^{\frac{1+\lambda_1}{\lambda_2-\lambda_1-1}}}{{x^*_5}^{\frac{1}{\lambda_2-\lambda_1-1}}} 
			= \left( \alpha_2-\frac{\alpha_3\alpha_4}{\alpha_5}\right) 
			\frac{{\alpha_4}^{\frac{1+\lambda_1}{\lambda_2-\lambda_1-1}}}{{\alpha_5}^{\frac{1}{\lambda_2-\lambda_1-1}}}$,
			$\alpha_4 x^*_4 >0$, $\alpha_5 x^*_5 >0$, $x_1^*, x^*, y^* \in \R$&a part of an algebraic hypersurface& generic\\
		\cline{3-6} & $\alpha_4 \alpha_5 \neq 0$ & $G_5, G_6^\lambda$ & 
			$\left(\frac{x^*_2}{x^*_4}-\frac{x^*_3}{x^*_5}\right) \sqrt{\vert x^*_4 \vert} = \left(\frac{\alpha_2}{\alpha_4}-\frac{\alpha_3}{\alpha_5}\right) 
			\sqrt{\vert\alpha_4 \vert }$, $\alpha_4 x^*_4 >0$, $\alpha_5 x^*_5 >0$, $x_1^*, x^*, y^* \in \R$ & a part of an algebraic hypersurface & generic \\
		\cline{3-6} & & $G_9$ & $\frac{x^*_2}{x^*_4} - \frac{x^*_3}{x^*_5} + \ln\vert x^*_4 \vert = 
			\frac{\alpha_2}{\alpha_4} - \dfrac{\alpha_3}{\alpha_5} + \ln\vert \alpha_4 \vert$, $\alpha_4 x^*_4 >0$, $\alpha_5 x^*_5 >0$, $x_1^*, x^*, y^* \in \R$
			& a part of a transcendental hypersurface & generic \\
		\cline{3-6} & & $G_{10}^\lambda$ & $\frac{x^*_2}{x^*_4} - \frac{x^*_3}{x^*_5} -\lambda\ln\vert x^*_4 \vert + \ln\vert x^*_5 \vert
			= \frac{\alpha_2}{\alpha_4} - \frac{\alpha_3}{\alpha_5} -\lambda\ln\vert \alpha_4 \vert + \ln\vert \alpha_5 \vert$, 
			$\alpha_4 x^*_4>0$, $\alpha_5 x^*_5 >0$, $x_1^*, x^*, y^* \in \R$ & a part of a transcendental hypersurface & generic \\	
		
		\hline \multirow{6}{*}{$\begin{array}{l}\alpha_3 \alpha_4 \neq 0 = \alpha_5, \\ \text{or } \alpha_5\neq 0,\\ \alpha_1,\alpha_2,  \alpha, \beta \in \R\end{array}$}	
			& $\alpha_3 \alpha_4 \neq 0 = \alpha_5$ & \multirow{6}{*}{$G_{12}^\lambda$} & 
			$x^*_5 = 0$, $\alpha_3 x^*_3>0$, $\alpha_4 x^*_4 >0$, $x_1^*, x_2^*, x^*, y^* \in \R$ & a part of the hyperplane $\{x^*_5 = 0\}$ & singularity \\
		\cline{2-2} \cline{4-6} & $\alpha_5\neq0$ & & $\left(x^*_2-\frac{x^*_3x^*_4}{x^*_5}\right)\frac{1}{{x^*_5}^{\frac{\lambda}{1+\lambda}} 
			e^{\frac{x^*_4}{(1+\lambda)x^*_5}}} = \left(\alpha_2-\frac{\alpha_3\alpha_4}{\alpha_5}\right)
			\frac{1}{\alpha_5^{\frac{\lambda}{1+\lambda}}e^{\frac{\alpha_4}{(1+\lambda)\alpha_5}}}$,
			$\alpha_5 x^*_5 >0$, $x_1^*, x^*, y^* \in \R$ & a half of a transcendental hypersurface & generic \\		
		
		\hline \multicolumn{2}{|c|}{} & \multirow{4}{*}{$G^\lambda_{13}$} & $\frac{x^*_2x^*_5-x^*_3x^*_4}{{x^{*}_4}^2+{x^{*}_5}^2}
			e^{\lambda \arctan\frac{x^*_4}{x^*_5}}
			= \frac{\alpha_2\alpha_5-\alpha_3\alpha_4}{{\alpha_4}^2+{\alpha_5}^2}e^{\lambda\arctan\frac{\alpha_4}{\alpha_5}}$,
			$x_1^*, x^*, y^* \in \R$, $x_4^*x_5^* \neq 0$ & &	 \\
		\cline{4-4} \multicolumn{2}{|c|}{} & & $\frac{x^*_2}{x_5^*} = \frac{\alpha_2\alpha_5-\alpha_3\alpha_4}{{\alpha_4}^2+{\alpha_5}^2}
			e^{\lambda\arctan\frac{\alpha_4}{\alpha_5}}$, $x_1^*, x^*, y^* \in \R$, $x_4^*=0 \neq x_5^*$ & & \\
		\cline{3-4} \multicolumn{2}{|c|}{$\begin{array}{l}	\alpha_4^2+\alpha_5^2 \neq 0, \\ \alpha_1,\alpha_2, \alpha_3, \alpha, \beta \in \R \end{array}$} 
			& $G_{14}^{\lambda_1\lambda_2}$ & 
			$\frac{x^*_3x^*_4-x^*_2x^*_5}{\left({x_4^*}^2 +{x_5^*}^2\right)^{\frac{2\lambda_1+1}{2(1+\lambda_1)}}}
			e^{\frac{\lambda_2}{1+\lambda_1}\arctan\frac{x^*_5}{x^*_4}} =
			\frac{\alpha_3\alpha_4-\alpha_2\alpha_5}{\left(\alpha_4^2+\alpha_5^2 \right)^{\frac{2\lambda_1+1}{2(1+\lambda_1)}}}
			e^{\frac{\lambda_2}{1+\lambda_1}\arctan\frac{\alpha_5}{\alpha_4}}$, $x_1^*, x^*, y^* \in \R$, $x_4^*x_5^* \neq 0$ 
			&  &  \\ 
			\cline{4-4} \multicolumn{2}{|c|}{} & & $\frac{x^*_3}{{x^*}_4^{\frac{\lambda_1}{1+\lambda_1}} }
			=
			\frac{\alpha_3\alpha_4-\alpha_2\alpha_5}{\left(\alpha_4^2+\alpha_5^2 \right)^{\frac{2\lambda_1+1}{2(1+\lambda_1)}}}e^{\frac{\lambda_2}{1+\lambda_1}\arctan\frac{\alpha_5}{\alpha_4}}$, $x_1^*, x^*, y^* \in \R$, $x_4^* \neq 0 = x_5^*$ & transcendental hypersurface&generic \\
		\cline{3-4} \multicolumn{2}{|c|}{} & \multirow{4}{*}{$G_{15}$} & $\frac{x^*_3x^*_4-x^*_2x^*_5}{{x_4^*}^2 + {x_5^*}^2}+\frac{\ln({x_4^*}^2+{x_5^*}^2)}{2}
			= \frac{ \alpha_3\alpha_4-\alpha_2\alpha_5}{\alpha_4^2+\alpha_5^2}+ \frac{\ln (\alpha_4^2+\alpha_5^2)}{2}$,
			$x_1^*, x^*, y^* \in \R$, $x_4^* x_5^* \neq 0$ & & \\
		\cline{4-4} \multicolumn{2}{|c|}{} & & $-\frac{x^*_2}{x^*_5}+\ln \vert x^*_5\vert = \frac{\alpha_3\alpha_4-\alpha_2\alpha_5}{\alpha_4^2+\alpha_5^2}
			+ \frac{\ln (\alpha_4^2+\alpha_5^2)}{2}$, $x_1^*, x^*, y^* \in \R$, $x_4^*=0 \neq x_5^*$ & & \\ 
		\cline{3-4} \multicolumn{2}{|c|}{} & \multirow{4}{*}{$G_{16}^\lambda$} & $\frac{x^*_3x^*_4-x^*_2x^*_5}{{x_4^*}^2+{x_5^*}^2} +
			\frac{x^*_5x^*_4}{2\left({x_4^*}^2 + {x_5^*}^2\right)} + \frac{\lambda\ln\left({x^*}^2_4+{x^*}^2_5 \right)}{2}
			- \frac{\arctan\frac{x^*_5}{x^*_4}}{2} = \frac{\alpha_3\alpha_4-\alpha_2\alpha_5}{\alpha^2_4+\alpha_5^2} +
			\frac{\alpha_5\alpha_4}{2(\alpha^2_4+\alpha_5^2)} + \frac{\lambda \ln \left({\alpha}^2_4+{\alpha}^2_5 \right) - \arctan\frac{\alpha_5}{\alpha_4}}{2},$
			$x_1^*, x^*, y^* \in \R$, $x_4^* x_5^* \neq 0$ &  &  \\
		\cline{4-4} \multicolumn{2}{|c|}{} & & $\frac{x^*_3}{x^*_4} +  \lambda \ln \vert x^*_4 \vert = \frac{\alpha_3\alpha_4-\alpha_2\alpha_5}{\alpha^2_4+\alpha_5^2} 
			+ \frac{\alpha_5\alpha_4}{2(\alpha^2_4+\alpha_5^2)} + \frac{\lambda \ln \left({\alpha}^2_4+{\alpha}^2_5 \right)}{2} 
			- \frac{\arctan\frac{\alpha_5}{\alpha_4}}{2}$, $x_1^*, x^*, y^* \in \R$, $x_4^* \neq 0 = x_5^*$ & & \\ \hline
	\end{longtable}
\end{landscape}}

\section{Foliations formed by generic K-orbits of considered Lie groups}\label{sec3}

This section establishes the remaining results of the paper on foliations formed by generic K-orbits of each Lie group $G \in \A$. 
Recall that all $V_G \in \{V_1, V_2, V_3\}$ determining by \eqref{FoliatedManifold}, \eqref{FoliatedManifold1}, \eqref{FoliatedManifold2}
are open submanifolds of $\G^* \equiv \R^7$, and $\F_G$ is the family of all generic K-orbits of $G$ (see Definition \ref{D4}).

\begin{thm}\label{FormedFoliation}
	For each group $G \in \A$, the family $\F_G$ of all generic K-orbits of $G$ forms a measurable foliation on the open manifold $V_G$ 
	in the sense of Connes \cite{con}, and it is called the foliation associated with $G$.
\end{thm}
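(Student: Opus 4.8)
The plan is to realize each $\F_G$ as the leaf family of a smooth codimension-one foliation cut out by a nonsingular $1$-form, and then to produce an explicit holonomy-invariant transverse measure. Since by Theorem \ref{picture K-orbit} every generic K-orbit is a six-dimensional submanifold of the seven-dimensional open manifold $V_G$, the foliation sought has codimension one, and this is what makes the argument uniform: by the criterion for measurable foliations recalled in \cite[Section 2]{t-v}, it suffices to exhibit on $V_G$ a smooth, nowhere-vanishing $1$-form $\omega_G$ satisfying the Frobenius condition $\omega_G \wedge d\omega_G = 0$, whose kernel is tangent to every orbit in $\F_G$, together with a transverse measure invariant under the holonomy pseudogroup of $\ker\omega_G$. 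Table \ref{tab2} already hands us, for each $G$, the ``invariant'' $I_G$ (the left-hand side of the orbit equation) whose level sets are the generic orbits, with $x_1^*, x^*, y^*$ free and the sign conditions on $x_4^*, x_5^*$ carving out the connected pieces of $V_G$ recorded in Remark \ref{RemarkGenericOrbits}; the candidate $1$-form is the differential of $I_G$ (or of $\log|I_G|$).

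First I would dispatch the exponential groups $G\in\A_1$, where $I_G$ is a globally single-valued smooth function on each connected component of $V_G$ (algebraic for $G_1^\lambda, G_3, G_4^{\lambda_1\lambda_2}, G_5, G_6^\lambda$, and of logarithmic type for $G_2, G_7, G_8^\lambda, G_9, G_{10}^\lambda, G_{11}, G_{12}^\lambda$). Taking $\omega_G := dI_G$, which is exact and hence closed, integrability is automatic. Nonvanishing follows because a suitable partial derivative of $I_G$ (with respect to $x_2^*$, or $x_4^*$ in the case $G_1^\lambda$) is a nonzero monomial in $x_4^*, x_5^*$ on the region $\{x_4^*x_5^*\neq0\}$ or $\{x_5^*\neq 0\}$ defining $V_1$ resp.\ $V_2$; thus $I_G$ is a submersion. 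Matching $\ker dI_G$ with the tangent spaces of the orbits is a direct check against the explicit computation of $\Omega_F$ carried out in Step 3 for the representatives $G_4^{\lambda_1\lambda_2}$ and $G_{12}^\lambda$, the other families having identical shape. On each connected component of $V_G$ the map $I_G$ is a submersion with connected fibers, so $\F_G$ is there a genuine fibration, the holonomy is trivial, and the pullback of Lebesgue measure from the target line is the invariant transverse measure; measurability then follows from the \cite{t-v} criterion.

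For the non-exponential groups $G\in\A_2$ the situation is more delicate, and this is where the essential work lies. Here the orbits fibre over the punctured plane $\{{x_4^*}^2+{x_5^*}^2\neq0\}$ defining $V_3$. For $G_{15}$ the invariant is still single-valued (only a $\log({x_4^*}^2+{x_5^*}^2)$ term appears), so the $\A_1$ reasoning applies verbatim on $V_3$. For $G_{13}^\lambda, G_{14}^{\lambda_1\lambda_2}$ and $G_{16}^\lambda$, however, $I_G$ contains an $\arctan(x_4^*/x_5^*)$ term and is only locally single-valued. Writing $I_G = g_G\, e^{c\,\theta}$ with $\theta=\arctan(x_4^*/x_5^*)$, $g_G$ single-valued, and $c$ the relevant constant, I would replace $I_G$ by the $1$-form
\[
\eta_G \;=\; \frac{dg_G}{g_G} + c\,d\theta, \qquad
d\theta = \frac{x_5^*\,dx_4^* - x_4^*\,dx_5^*}{{x_4^*}^2+{x_5^*}^2},
\]
which is single-valued, smooth and closed on $V_3\setminus\{g_G=0\}$, because although $\theta$ is multivalued its differential $d\theta$ is a globally defined closed $1$-form on the punctured plane. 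The kernel of $\eta_G$ equals $\ker dI_G$, so $\ker\eta_G$ is a well-defined integrable codimension-one distribution whose leaves are precisely the generic K-orbits. The decisive point is that the holonomy of the loop encircling the origin acts on the transverse coordinate $t=\log|I_G|$ by the translation $t\mapsto t+2\pi c$, so this foliation is generally \emph{not} a fibration; nevertheless the closed form $\eta_G$ yields the transverse measure $|\eta_G|$, which is invariant precisely because translation preserves Lebesgue measure on the transversal $\R$. Hence $(V_3,\F_G)$ is measurable in the sense of Connes \cite{con}.

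The main obstacle is exactly this $\A_2$ bookkeeping. One must (i) verify that $\eta_G$ (equivalently the everywhere-defined form $dg_G+c\,g_G\,d\theta$, whose kernel agrees with $\ker\eta_G$) is genuinely nowhere vanishing on all of $V_3$ and extends smoothly across the exceptional leaf $\{g_G=0\}$, checking separately that this single leaf carries transverse measure zero so that measurability is unaffected; (ii) confirm that passing from the multivalued invariant to its single-valued differential produces exactly the partition into leaves of Table \ref{tab2}, i.e.\ that the maximal integral manifolds of $\ker\eta_G$ coincide with the K-orbits. The $\A_1$ part is comparatively routine once $dI_G\neq0$ is pinned down on each sign-component, but across all sixteen families one still must confirm case by case, using the matrices of $B_F$ and the matrices $\exp(\ad_U)$ recorded in Tables \ref{tab5.2} and \ref{tab5.3}, that no spurious critical points of $I_G$ occur inside $V_G$ and that distinct leaves never share an invariant value within a single connected component.
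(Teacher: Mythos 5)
Your overall strategy is genuinely different from the paper's, so let me first register the difference before naming the problem. The paper never introduces defining $1$-forms or holonomy: for each $G$ it writes down an explicit rank-six system $S_G$ of smooth vector fields $\X_1,\dots,\X_6$ on $V_G$, integrates each flow explicitly, composes the flows and matches the image with the parametrization of $\Omega_F$ from Theorem \ref{picture K-orbit}, thereby proving that every generic orbit is a maximal connected integral manifold of $S_G$; measurability is then obtained from Connes' pair formulation $(\X,\mu)$ with $\X=\X_1\wedge\cdots\wedge\X_6$ and $\mu$ Lebesgue, the invariance being checked through the constancy, along each orbit, of the Jacobian $J_U$ of $K\left(\exp_G(U)\right)$. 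On $\A_1$ your first-integral route is a legitimate and arguably cleaner substitute: $I_G$ is single-valued, $dI_G$ is nowhere zero on each sign component, the foliation is a fibration there, and the pullback of Lebesgue measure on the base is an invariant transverse measure. Note, however, that your item (ii) --- that maximal integral manifolds of $\ker\eta_G$ coincide with \emph{single} K-orbits --- is not mere bookkeeping: it is exactly the content of the paper's Step 1, and in the $\A_2$ cases the connectedness of one leaf through the countably many sheets of the multivalued invariant can only be seen from the group action (the $y$-flow rotating the $(x_4^*,x_5^*)$-plane), not from $\eta_G$ alone.

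The genuine gap is in your $\A_2$ measure construction. Whenever $c\neq0$ (all $G_{13}^{\lambda}$ with $\lambda>0$, $G_{14}^{\lambda_1\lambda_2}$ with $\lambda_2>0$, and \emph{every} $G_{16}^\lambda$, whose angular coefficient is $-\tfrac12$ independently of $\lambda$), the holonomy of the loop encircling $\{x_4^*=x_5^*=0\}$ acts on a transversal coordinate $g=g_G$ through the exceptional leaf $\{g_G=0\}$ by the linear dilation $g\mapsto e^{\pm2\pi c}g$ --- you say this yourself when you record the translation $t\mapsto t+2\pi c$ on $t=\log|I_G|$. But then your transverse measure $|\eta_G|$, which in this coordinate is $|dg/g|$, assigns infinite mass to every compact transversal crossing the exceptional leaf, and the proposed patch (``this single leaf carries transverse measure zero'') cannot repair it: by the standard fundamental-domain argument, any holonomy-invariant transverse measure that is finite on compact transversals must annihilate the whole punctured transverse neighborhood $\{0<|g|<\epsilon\}$, since the intervals $\left[e^{-2\pi(n+1)c}\epsilon,\,e^{-2\pi nc}\epsilon\right]$, $n\ge0$, are pairwise holonomy-equivalent and sit disjointly inside a compact transversal. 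So on $V_3$ no fully supported, locally finite holonomy-invariant transverse measure exists at all; as written, your argument establishes measurability only for the restriction to $V_3\setminus\{g_G=0\}$ (note also that the non-exceptional leaves spiral onto $\{g_G=0\}$, so the leaf is a genuine accumulation locus, not a removable singular set). This is precisely why the paper works with the pair $(\X,\mu)$: the leafwise $6$-vector density built from Euler-type fields such as $x_2^*\frac{\partial}{\partial x_2^*}+x_3^*\frac{\partial}{\partial x_3^*}+x_4^*\frac{\partial}{\partial x_4^*}+x_5^*\frac{\partial}{\partial x_5^*}$ scales with the leaf and absorbs the dilation, so Lebesgue measure serves globally. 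To close your proof you must either recast the transverse data in that form --- which collapses into the paper's Step 2 --- or verify that the convention of \cite{con} admits transverse measures taking the value $+\infty$ on compact transversals; without one of these, the $\A_2$ case is not established.
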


\begin{proof}
	The proof is analogous to the case of MD-groups in \cite{vu90, v-t, v-h, v-h-t} and Lie groups in \cite[Theorem 21]{t-v}. 
	It consists of two steps as follows
	\begin{itemize}
		\item Step 1: For any $G \in \A$, construct a suitable differential system $S_G$ of rank six on the manifold $V_G$ 
		such that each K-orbit $\Omega$ from $\F_G$ is a maximal connected integral submanifold corresponding to $S_G$.
		\item Step 2: Afterwards, show that the Lebesgue measure is invariant for some smooth polyvector field $\X$ 
		of degree six such that it generates $S_G$.
	\end{itemize}
	 
	As emphasized in Remark \ref{RemarkGenericOrbits}, sixteen families of Lie groups in $\A$ can be divided into three subfamilies as follows
	\[
		\A = \left\lbrace G^{\lambda}_{1}, G_{2}, G_{3}, G_{4}^{\lambda_1\lambda_2}, G_5, G_{6}^{\lambda}, G_7, G_{8}^{\lambda}, 
		G_{9}, G^{\lambda}_{10}, G_{11} \right\rbrace \cup \left\lbrace G_{12}^{\lambda} \right\rbrace \cup 
		\left\lbrace G^{\lambda}_{13}, G_{14}^{\lambda_1\lambda_2}, G_{15}, G_{16}^{\lambda} \right\rbrace,
	\]
	where groups in the same subfamily have almost the same picture of generic K-orbits. To prove Theorem \ref{FormedFoliation}, 
	we thus choose in each of these subfamilies a representative group and calculate carefully for that selected group.
	
	Note that the assertion of Theorem \ref{FormedFoliation} for cases $G_2$, $G_3$, $G_9$ and $G^\lambda_{10}$ 
	had been proved in \cite{t-v}. In the following, we choose three groups $G_4^{\lambda_1\lambda_2}$, $G_{12}^{\lambda}$ and $G_{13}^{\lambda}$
	and prove Theorem \ref{FormedFoliation} explicitly for this cases.
	
	Let $G=G_4^{\lambda_1\lambda_2}$. {\it Proof of Step 1.} For any $F(\alpha_1,\alpha_2, \alpha_3, \alpha_4, ,\alpha_5,\alpha,\beta) \in \G^*$,
	by Theorem \ref{picture K-orbit}, the generic K-orbit $\Omega_F$ belongs to $\F_G$ if and only if $\alpha_4\alpha_5 \neq 0$. 
	Furthermore, if we denote by $v$ the element $(x^*_1, x^*_2, x^*_3, x^*_4, x^*_5, x^*, y^*) \in \G^*$ then
	\begin{flalign*}
		\Omega_F = \Bigg\{ v \in \G^* \, \bigg| \left( x^*_2-\frac{x^*_3x^*_4}{x^*_5}\right) 
			\frac{{x^*_4}^{\frac{1+\lambda_1}{\lambda_2-\lambda_1-1}}}{{x^*_5}^{\frac{1}{\lambda_2-\lambda_1-1}}} 
			= \left( \alpha_2-\frac{\alpha_3\alpha_4}{\alpha_5}\right) \frac{{\alpha_4}^{\frac{1+\lambda_1}{\lambda_2-\lambda_1-1}}}
				{{\alpha_5}^{\frac{1}{\lambda_2-\lambda_1-1}}},
			\alpha_4 x^*_4 > 0, \, \alpha_5 x^*_5 >0 \Bigg\}.
	\end{flalign*}
	
\noindent Now we consider on $V_G$ the following differential system $S_G$: 
	\[
		\begin{cases}
			\X_1 :=\frac{\partial}{\partial x^*_1} \\
			\X_2 :=  \lambda_1x^*_3\frac{\partial}{\partial x^*_3} + x^*_4\frac{\partial}{\partial x^*_4} + (\lambda_1 + 1)x^*_5\frac{\partial}{\partial x^*_5} \\
			\X_3 :=x^*_2\frac{\partial}{\partial x^*_2} + \lambda_2x^*_3\frac{\partial}{\partial x^*_3} + x^*_4\frac{\partial}{\partial x^*_4} 
						+ \lambda_2x^*_5\frac{\partial}{\partial  x^*_5} \\
			\X_4 :=x^*_4\frac{\partial}{\partial x^*_2} +x^*_5\frac{\partial}{\partial x^*_3} \\
			\X_5:= \frac{\partial}{\partial x^*} \\
			\X_6:=\frac{\partial}{\partial y^*}.
		\end{cases}
	\]
	Obviously, $\Rank(S_G)=6$ and all $\X_i \,(i= 1, \ldots, 6)$ are smooth over $V_G$. 
	We claim that $S_G$ generates $\F_G$, i.e. each K-orbit $\Omega$ from $\F_G$ is a maximal connected integral submanifold of $S_G$.
	
\noindent	If fact, we first consider $\X_1, \X_5, \X_6$. Clearly, their flows (i.e. one-parameter subgroups) are determined as follows
	\[
		\begin{array}{l l}
			\theta^{\X_1}_{x^*_1-\alpha_1}: & F \mapsto \theta^{\X_1}_{x^*_1-\alpha_1}(F):=(x^*_1, \alpha_2,\alpha_3, \alpha_4, \alpha_5, \alpha, \beta), \\
			\theta^{\X_5}_{x^*-\alpha}: & F \mapsto \theta^{\X_5}_{x^*-\alpha}(F):=(\alpha_1, \alpha_2,\alpha_3, \alpha_4, \alpha_5, x^*, \beta), \\
			\theta^{\X_6}_{y^*-\beta}: & F \mapsto \theta^{\X_6}_{y^*-\beta}(F):=(\alpha_1, \alpha_2,\alpha_3, \alpha_4, \alpha_5, \alpha, y^*).
			\end{array}
	\]
	
\noindent	Next, we consider $\X_2$. For some positive $\epsilon \in \R$, assume that
	\[
		\varphi: t \mapsto \varphi(t) = \left(x^*_1(t), x^*_2(t), x^*_3(t), x^*_4(t), x^*_5(t), x^*(t), y^*(t) \right), \quad t \in (-\epsilon, \epsilon)
	\]
	is an integral curve of $\X_2$ passing $F=\varphi(0)$. Then, we must have $\varphi'(t)={\X_2}_{\varphi(t)}$ and it is equivalent to
	\[
		\begin{array}{l}
			\sum \limits_{i=1}^{5}x^{*'}_i(t)\frac{\partial}{\partial x^*_i}+x^{*'}(t)\frac{\partial}{\partial x^*}+y^{*'}(t)\frac{\partial}{\partial y^*} 
			= \lambda_1x^*_3(t)\frac{\partial}{\partial  x^*_3} + x^*_4(t)\frac{\partial}{\partial  x^*_4} + (1+\lambda_1)x^*_5(t)\frac{\partial}{\partial  x^*_5}\cdot
		\end{array}
	\]
	By identifying coefficients on two sides, we obtain
	\begin{equation}\label{G3X2}
		\begin{cases}
			x^{*'}_1(t)= x^{*'}_2(t)=x^{*'}(t)= y^{*'}(t)=0 \\
			x^{*'}_3(t)=\lambda_1x^{*}_3(t) \\
			x^{*'}_4(t)=x^*_4(t) \\
			x^{*'}_5(t)=(1+\lambda_1)x^*_5(t).
		\end{cases}
	\end{equation}
	Since $F=\varphi(0)$, system \eqref{G3X2} gives us
	\[
		x^*_1=\alpha_1,\, x^*_2=\alpha_2,\, x^*_3= \alpha_3e^{\lambda_1t},\, x^*_4=\alpha_4e^t,\, x^*_5=\alpha_5e^{(1+\lambda_1)t},\, x^*=\alpha,\, y^*=\beta.
	\]
	Therefore, the flow of $\mathfrak{X}_2$ is as follows
	\[
		\theta^{\X_2}_{x}: F \mapsto \theta^{\X_2}_{x}(F):=(\alpha_1, \alpha_2,\alpha_3e^{\lambda_1x}, \alpha_4e^x, \alpha_5e^{(1+\lambda_1)x}, \alpha, \beta).
	\]	
	Similarly, the flows of $\X_3$ is as follows
	\[
		\theta^{\X_3}_{y}: F \mapsto \theta^{\X_3}_{y}(F):=(\alpha_1, \alpha_2e^y,\alpha_3e^{\lambda_2y}, \alpha_4e^y, \alpha_5e^{\lambda_2y}, \alpha, \beta).
	\]	
	Now, we consider $\X_4$. Assume that
	\[
		\varphi: t \mapsto \varphi(t)=(x^*_1(t), x^*_2(t), x^*_3(t), x^*_4(t), x^*_5(t), x^*(t), y^*(t))
	\]
	be an integral curve of $\X_4$ passing $F=\varphi(0)$, where $t \in (-\epsilon, \epsilon) \subset \R$ for some positive $\epsilon \in \R$. 
	Then we have $\varphi'(t)={\X_4}_{\varphi(t)}$ which is equivalent to
	\begin{align}\label{G3x4}
		& \sum_{i=1}^{5}x^{*'}_i(t)\frac{\partial}{\partial x^*_i}+x^{*'}(t)\frac{\partial}{\partial x^*}+y^{*'}(t)\frac{\partial}{\partial y^*}
			=x^*_4(t) \frac{\partial}{\partial  x^*_2} +x^*_5(t)\frac{\partial}{\partial  x^*_3} \nonumber\\
		\Leftrightarrow &
		\begin{cases}
			x^{*'}_1(t)= x^{*'}_4(t)=
			x^{*'}_5(t)= x^{*'}(t)= y^{*'}(t)=0\\
			x^{*'}_2(t)=x^*_4(t)\\
			x^{*'}_3(t)=x^*_5(t). 
		\end{cases} 
	\end{align}
	Since $F=\varphi(0)$, equation \eqref{G3x4} gives us
	\[
		x^*_1=\alpha_1,\,x^*_2= \alpha_2+\alpha_4t,\, x^*_3=\alpha_3+\alpha_5t,\,x^*_4=\alpha_4,\, x^*_5=\alpha_5,\, x^*=\alpha,\; y^*=\beta.
	\]
	Therefore, the flow of $\X_4$ is as follows
	\[
		\theta^{\X_4}_{x_1}: F \mapsto \theta^{\X_2}_{x_1}(F):=(\alpha_1,\alpha_2+\alpha_4x_1,\alpha_3+\alpha_5x_1,\alpha_4, \alpha_5, \alpha, \beta).
	\]
	
	Finally, we set $\theta=\theta^{\X_6}_{y^*-\beta} \circ \theta^{\X_5}_{x^*-\alpha} \circ \theta^{\X_4}_{x_1} \circ 
	\theta^{\X_3}_y \circ \theta^{\X_2}_{x} \circ\theta^{\X_1}_{x^*_1-\alpha_1}$. Then
	\begin{flalign*}
		\theta(F) & = \theta^{\X_6}_{y^*-\beta} \circ \theta^{\X_5}_{x^*-\alpha} \circ \theta^{\X_4}_{x_1} 
				\circ \theta^{\X_3}_y \circ \theta^{\X_2}_x \circ \theta^{\X_1}_{x^*_1-\alpha_1}(F) \\
		& = (x^*_1, x^*_2, x^*_3, x^*_4, x^*_5, x^*, y^*)
	\end{flalign*}
	where $x^*_1, x^*, y^* \in \R$ and
	\[
		\begin{array}{l l}
			x^*_2= \alpha_2e^{y} + \alpha_4x_1e^{y}, & x^*_3= \alpha_3 e^{\lambda_1x+\lambda_2y} + \alpha_5x_1e^{\lambda_1x+\lambda_2y}, \\
			x^*_4= \alpha_4e^{x+y}, & x^*_5=\alpha_5 e^{\lambda_1x+x+\lambda_2y}.
		\end{array}	
	\]
	By direct calculations, we get 
	\[
		\begin{array}{l}
				x^*_2 = \frac{x^*_3x^*_4}{x^*_5} + \left( \alpha_2-\frac{\alpha_3\alpha_4}{\alpha_5}\right) 
					\frac{{\alpha_4}^{\frac{1+\lambda_1}{\lambda_2-\lambda_1-1}}}{{\alpha_5}^{\frac{1}{\lambda_2-\lambda_1-1}}}
					\frac{{x^*_5}^{\frac{1}{\lambda_2-\lambda_1-1}}}{{x^*_4}^{\frac{1+\lambda_1}{\lambda_2-\lambda_1-1}}} 
			\Leftrightarrow  \left( x^*_2-\frac{x^*_3x^*_4}{x^*_5}\right) 
					\frac{{x^*_4}^{\frac{1+\lambda_1}{\lambda_2-\lambda_1-1}}}{{x^*_5}^{\frac{1}{\lambda_2-\lambda_1-1}}} 
					= \left( \alpha_2-\frac{\alpha_3\alpha_4}{\alpha_5}\right) 
					\frac{{\alpha_4}^{\frac{1+\lambda_1}{\lambda_2-\lambda_1-1}}}{{\alpha_5}^{\frac{1}{\lambda_2-\lambda_1-1}}}\cdot
		\end{array}
	\]
	Hence, $\left\lbrace \theta(F) \, | \, x^*_1, x^*_3, x^*_4, x^*_5, x^*, y^* \in \R, \alpha_4x^*_4>0, \alpha_5x^*_5>0 \right\rbrace \equiv \Omega_F$,
	i.e. $\Omega_F$ is a maximal connected integral submanifold corresponding to $S_{G}$. 
	In other words, $S_G$ generates $\F_G$ and $(V_G, \F_G)$ is a six-dimensional foliation for $G = G_4^{\lambda_1\lambda_2}$, 
	where $V_G = V_1$ as in \eqref{FoliatedManifold}.
	
	\noindent{\it Proof of Step 2.} We have to show that the foliation $(V_G, \F_G)$ is measurable in the sense of Connes. 
	As mentioned in \cite[Subsection 2.2]{t-v}, to prove that $(V_G, \F_G)$ is measurable, 
	we only need to choose some suitable pair ($\X, \mu$) on $V_G$, where $\X$ is some smooth 6-vector field defined on $V_G$,
	$\mu$ is some measure on $V_G$ such that $\X$ generates $S_G$ and $\mu$ is $\X$-invariant.
	Namely, we choose $\mu$ to be exactly the Lebesgue measure on $V_G$ and set 
	$\X := \X_1 \wedge \X_2 \wedge \X_3 \wedge \X_4 \wedge \X_5 \wedge \X_6$.
	Clearly, $\X$ is smooth, non-zero everywhere on $V_G$ and it is exactly a polyvector field of degree six. 
	Moreover, $\X$ generates $S_G$. In other words, if we choose a suitable orientation on $(V_G, \F_G)$ then 
	$\X \in C^\infty \left(\Lambda^6(\F) \right)^+$. Besides, the invariance of the Lebesgue measure $\mu$ with respect to $\X$ 
	is equivalent to the invariance of $\mu$ for the K-representation that is restricted to the foliated submanifold $V_G$ in $\G^*$. 
	For any $U(x_1, x_2, x_3, x_4, x_5, x, y) \in \G$, direct computations show that the Jacobi determinant
	$J_U$ of differential map $K \left(\exp_G(U)\right)$ is a constant which depends only on $U$ but does not depend on 
	the coordinates of any point which moves in each generic K-orbit $\Omega \in \F_G$. 
	This means that the Lebegue measure $\mu$ is $\X$-invariant.
	This completes the proof for the case $G = G_4^{\lambda_1\lambda_2}$.

Next, we consider the case $G = G_{12}^{\lambda}$. {\it Proof of Step 1.} For any $F(\alpha_1,\alpha_2, \alpha_3, \alpha_4, ,\alpha_5,\alpha,\beta) \in \mathcal{G}^*$,
by Theorem \ref{picture K-orbit}, the generic K-orbits $\Omega_F$ belongs to $\mathcal{F}_G$ if and only if $\alpha_5 \neq 0$. 
Furthermore, if we denote by $v$ the element $(x^*_1, x^*_2, x^*_3, x^*_4, x^*_5, x^*, y^*) \in \G^*$ then
\[\begin{array}{r}
\Omega_F=\Bigg\{v \in \mathcal{G}^* \, \bigg| \, \left(x^*_2-\frac{x^*_3x^*_4}{x^*_5}\right)\frac{1}{{x^*_5}^{\frac{\lambda}{1+\lambda}}e^{\frac{x^*_4}{(1+\lambda)x^*_5}}}  =\left(\alpha_2-\frac{\alpha_3\alpha_4}{\alpha_5}\right)\frac{1}{\alpha_5^{\frac{\lambda}{1+\lambda}}e^{\frac{\alpha_4}{(1+\lambda)\alpha_5}}} , 
\;  \alpha_5 x^*_5 >0 \Bigg\}.\end{array}\]
On the open submanifold $V_G$ we consider the following differential system $S_{G}$: 
\[\begin{cases}
\mathfrak{X}_1:=\frac{\partial}{\partial x^*_1}
\\
\mathfrak{X}_2:=  \lambda
x^*_2\frac{\partial}{\partial  x^*_2}  +	\lambda x^*_3\frac{\partial}{\partial  x^*_3} + (\lambda+1)x^*_4\frac{\partial}{\partial  x^*_4} + (\lambda+1)x^*_5\frac{\partial}{\partial  x^*_5}
\\
\mathfrak{X}_3:=(x^*_2+x^*_3)\frac{\partial}{\partial  x^*_2} + x^*_3\frac{\partial}{\partial  x^*_3} + (x^*_4+x^*_5)\frac{\partial}{\partial  x^*_4} +  x^*_5 \frac{\partial}{\partial  x^*_5}
\\
\mathfrak{X}_4:=x^*_4\frac{\partial}{\partial  x^*_2} +x^*_5\frac{\partial}{\partial  x^*_3}  
\\
\mathfrak{X}_5:= \frac{\partial}{\partial x^*} 
\\
\mathfrak{X}_6:=\frac{\partial}{\partial y^*} \cdot
\end{cases}
\]
Obviously, $\Rank(S_G)=6$ and all $\mathfrak{X}_i \,(i= 1, \ldots, 6)$ are smooth over $V_G$. Now, we will show that $S_{G}$ generates $\mathcal{F}_{G}$.

\noindent First, we consider 
$\mathfrak{X}_1, \mathfrak{X}_5, \mathfrak{X}_6$. Their flows (i.e. one-parameter subgroups) are determined as follows
\[	\begin{array}{ll}
\theta^{\mathfrak{X}_1}_{x^*_1-\alpha_1}:& F \mapsto \theta^{\mathfrak{X}_1}_{x^*_1-\alpha_1}(F):=(x^*_1, \alpha_2,\alpha_3, \alpha_4, \alpha_5, \alpha, \beta)\\
\theta^{\mathfrak{X}_5}_{x^*-\alpha}:& F \mapsto \theta^{\mathfrak{X}_5}_{x^*-\alpha}(F):=(\alpha_1, \alpha_2,\alpha_3, \alpha_4, \alpha_5, x^*, \beta)\\
\theta^{\mathfrak{X}_6}_{y^*-\beta}:& F \mapsto \theta^{\mathfrak{X}_6}_{y^*-\beta}(F):=(\alpha_1, \alpha_2,\alpha_3, \alpha_4, \alpha_5, \alpha, y^*).
\end{array}
\]
Next, we consider $\mathfrak{X}_2:= \lambda
x^*_2\frac{\partial}{\partial  x^*_2}  +	\lambda x^*_3\frac{\partial}{\partial  x^*_3} + (\lambda+1)x^*_4\frac{\partial}{\partial  x^*_4} + (\lambda+1)x^*_5\frac{\partial}{\partial  x^*_5}$. For some positive $\epsilon \in {\mathbb R}$, assume that
\[\varphi: t \mapsto \varphi(t)=\bigl(x^*_1(t), x^*_2(t), x^*_3(t), x^*_4(t), x^*_5(t), x^*(t), y^*(t)\bigr); \;  t \in (-\epsilon, \epsilon)\]
is an integral curve of $\X_2$ passing $F=\varphi(0)$.
Then, $\varphi'(t)={\X_2}_{\varphi(t)}$, that is
\begin{align}\label{G3X2}
& \sum_{i=1}^{5}x^{*'}_i(t)\frac{\partial}{\partial x^*_i}+x^{*'}(t)\frac{\partial}{\partial x^*}+y^{*'}(t)\frac{\partial}{\partial y^*}= \nonumber
\\
& \qquad = \lambda
x^*_2(t)\frac{\partial}{\partial x^*_2}  +	\lambda x^*_3(t)\frac{\partial}{\partial  x^*_3} + (\lambda+1)x^*_4(t)\frac{\partial}{\partial  x^*_4} + (\lambda+1)x^*_5(t)\frac{\partial}{\partial  x^*_5} \nonumber\\
\Leftrightarrow &  \begin{cases}
x^{*'}_1(t)=x^{*'}(t)= y^{*'}(t)=0
\\
x^{*'}_2(t)=  \lambda
x^*_2(t)
\\
x^{*'}_3(t)=\lambda x^{*}_3(t)
\\
x^{*'}_4(t)=(\lambda+1)x^*_4(t)
\\
x^{*'}_5(t)=(\lambda+1)x^*_5(t).
\end{cases}
\end{align}
Combining with condition $F=\varphi(0)$, we obtain
\begin{equation}\label{X2G3}
x^*_1=\alpha_1,\, x^*_2= \alpha_2e^{\lambda t},\, x^*_3= \alpha_3e^{\lambda t},\, x^*_4=\alpha_4e^{(\lambda+1)t},\, x^*_5=\alpha_5e^{(\lambda+1)t},\, x^*=\alpha,\, y^*=\beta.
\end{equation}
Therefore, the flow of $\mathfrak{X}_2$ is
\[
\theta^{\X_2}_{x}: F \mapsto \theta^{\X_2}_{x}(F):=(\alpha_1,  \alpha_2e^{\lambda x},\alpha_3e^{\lambda x}, \alpha_4e^{(\lambda+1)x}, \alpha_5e^{(\lambda+1)x}, \alpha, \beta).
\]
Next, we consider $\mathfrak{X}_3:= (x^*_2+x^*_3)\frac{\partial}{\partial  x^*_2} + x^*_3\frac{\partial}{\partial  x^*_3} + (x^*_4+x^*_5)\frac{\partial}{\partial  x^*_4} +  x^*_5 \frac{\partial}{\partial  x^*_5}$. For some positive $\epsilon \in {\mathbb R}$, assume~that
\[\varphi: t \mapsto \varphi(t)=\bigl(x^*_1(t), x^*_2(t), x^*_3(t), x^*_4(t), x^*_5(t), x^*(t), y^*(t)\bigr); t \in (-\epsilon, \epsilon)\]
is an integral curve of $\mathfrak{X}_3$ passing $F=\varphi(0)$.
Then, $\varphi'(t)={\X_2}_{\varphi(t)}$ which is equivalent to
\begin{align}\label{G13X3}
& \sum_{i=1}^{5}x^{*'}_i(t)\frac{\partial}{\partial x^*_i}+x^{*'}(t)\frac{\partial}{\partial x^*}+y^{*'}(t)\frac{\partial}{\partial y^*}= \nonumber
\\
& \qquad =\left[ x^*_2(t)+x^*_3(t)\right] \frac{\partial}{\partial  x^*_2} + x^*_3(t)\frac{\partial}{\partial  x^*_3} + \left[ x^*_4(t)+x^*_5(t)\right] \frac{\partial}{\partial  x^*_4} +  x^*_5(t) \frac{\partial}{\partial  x^*_5} \nonumber\\
\Leftrightarrow &  \begin{cases}
x^{*'}_1(t)= x^{*'}(t)= y^{*'}(t)=0
\\
x^{*'}_2(t)=  x^*_2(t)+x^*_3(t)
\\
x^{*'}_3(t)=x^{*}_3(t)
\\
x^{*'}_4(t)=x^*_4(t)+x^*_5(t)
\\
x^{*'}_5(t)= x^*_5(t).
\end{cases}
\end{align}
Combining with condition $F=\varphi(0)$, systemn \eqref{G13X3} gives us
\begin{equation*}\label{X3G13}
\begin{array}{ll}
&x^*_1=\alpha_1,
\; x^*_2= \alpha_2e^{t}+\alpha_3te^{t},
\; x^*_3= \alpha_3e^{t},
\\
&x^*_4=\alpha_4e^{t}+\alpha_5ye^{t},
\; x^*_5=\alpha_5e^{t},
\; x^*=\alpha, \quad y^*=\beta.
\end{array}
\end{equation*}
Hence, the flow of $\mathfrak{X}_3$ is
\[
\theta^{\X_3}_{y}: F \mapsto \theta^{\X_3}_{y}(F):=(\alpha_1, \alpha_2e^{y}+\alpha_3ye^{y},\alpha_3e^{y}, \alpha_4e^{y}+\alpha_5ye^{y}, \alpha_5e^{y}, \alpha, \beta).
\]
Now, we consider $\X_4:=x^*_4 \frac{\partial}{\partial  x^*_2} +x^*_5\frac{\partial}{\partial  x^*_3}$. Assume that
\[\varphi: t \mapsto \varphi(t)=\bigl(x^*_1(t), x^*_2(t), x^*_3(t), x^*_4(t), x^*_5(t), x^*(t), y^*(t)\bigr)\]
is an integral curve of $\X_4$ passing $F=\varphi(0)$, where $ t \in (-\epsilon, \epsilon) \subset \R$ for some positive $\epsilon \in \R$. 
Then we have $\varphi'(t)={\X_4}_{\varphi(t)}$ which means that
\begin{align}\label{G3X4}
& \sum_{i=1}^{5}x^{*'}_i(t)\frac{\partial}{\partial x^*_i}+x^{*'}(t)\frac{\partial}{\partial x^*}+y^{*'}( t)\frac{\partial}{\partial y^*}=x^*_4(t) \frac{\partial}{\partial  x^*_2} +x^*_5( t)\frac{\partial}{\partial  x^*_3} \nonumber\\
\Leftrightarrow &  \begin{cases}
x^{*'}_1(t)= x^{*'}_4(t)=
x^{*'}_5(t)= x^{*'}(t)= y^{*'}(t)=0\\
x^{*'}_2(t)=x^*_4(t)\\
x^{*'}_3(t)=x^*_5(t). 
\end{cases} 
\end{align}
Since $F=\varphi(0)$, system \eqref{G3X4} gives us
\begin{align}\label{X4G3}
&x^*_1=\alpha_1,\;x^*_2= \alpha_2+\alpha_4 t,\; x^*_3=\alpha_3+\alpha_5 t,\;
x^*_4=\alpha_4,	\; x^*_5=\alpha_5,\; x^*=\alpha,\; y^*=\beta.
\end{align}
Therefore, the flow of $\mathfrak{X}_4$ is
\[
\theta^{\X_4}_{x_1}: F \mapsto \theta^{\X_4}_{x_1}(F):=(\alpha_1,\alpha_2+\alpha_4x_1,\alpha_3+\alpha_5x_1,\alpha_4, \alpha_5, \alpha, \beta).
\]
By setting $\theta=\theta^{\mathfrak{X}_6}_{y^*-\beta}\circ\theta^{\mathfrak{X}_5}_{x^*-\alpha}\circ\theta^{\mathfrak{X}_4}_{x_1}\circ\theta^{\mathfrak{X}_3}_{y}\circ\theta^{\mathfrak{X}_2}_{x}\circ\theta^{\mathfrak{X}_1}_{x^*_1-\alpha_1}$, we have
\begin{flalign*}
\theta(F) & =\theta^{\mathfrak{X}_6}_{y^*-\beta}\circ\theta^{\mathfrak{X}_5}_{x^*-\alpha}\circ\theta^{\mathfrak{X}_4}_{x_1}\circ\theta^{\mathfrak{X}_3}_{y}\circ\theta^{\mathfrak{X}_2}_{x}\circ\theta^{\mathfrak{X}_1}_{x^*_1-\alpha_1}(F)
\\
& =(x^*_1, x^*_2, x^*_3, x^*_4, x^*_5, x^*, y^*)
\end{flalign*}
where $x^*_1, x^*, y^* \in \R$ and
\[
\begin{array}{l}
x^*_2= \alpha_2e^{y}+ \alpha_4x_1e^{y}, \;x^*_3= \alpha_3 e^{\lambda_1x+\lambda_2y} +\alpha_5x_1e^{\lambda_1x+\lambda_2y},
\;
x^*_4= \alpha_4e^{x+y},\; x^*_5=\alpha_5 e^{\lambda_1x+x+\lambda_2y}.
\end{array}	
\]
By direct calculations, we get 
\[\begin{array}{ll}
&	x^*_2
= \frac{x^*_3x^*_4}{x^*_5} + \left( \alpha_2-\frac{\alpha_3\alpha_4}{\alpha_5}\right) \frac{{x^*_5}^{\frac{\lambda}{1+\lambda}}e^{\frac{x^*_4}{(1+\lambda)x^*_5}}}{\alpha_5^{\frac{\lambda}{1+\lambda}}e^{\frac{\alpha_4}{(1+\lambda)\alpha_5}}}  
\\
\Leftrightarrow &\left(x^*_2-\frac{x^*_3x^*_4}{x^*_5}\right)\frac{1}{{x^*_5}^{\frac{\lambda}{1+\lambda}}e^{\frac{x^*_4}{(1+\lambda)x^*_5}}}  =\left(\alpha_2-\frac{\alpha_3\alpha_4}{\alpha_5}\right)\frac{1}{\alpha_5^{\frac{\lambda}{1+\lambda}}e^{\frac{\alpha_4}{(1+\lambda)\alpha_5}}} \cdot
\end{array}\]
Hence, 
$\left\lbrace \theta(F) \, | \, x^*_1, x^*_3, x^*_4, x^*_5, x^*, y^* \in \R; \alpha_5x^*_5>0 \right\rbrace \equiv \Omega_F$,
i.e. $\Omega_F$ is a maximal connected integral submanifold corresponding to $S_{G}$. Therefore, $S_{G}$ generates $\mathcal{F}_{G}$ and $(V_{G},\mathcal{F}_{G})$ is a six-dimensional foliation for $G = G_{12}^{\lambda}$. 	

 \noindent {\it Proof of Step 2.} Similar to the case $G=G_{4}^{\lambda_1,\lambda_2}$, we also show that the foliation $(V_{G},\mathcal{F}_{G})$ is measurable in the sense of Connes for $G = G_{12}^{\lambda}$. The proof is complete for the case $G = G_{12}^{\lambda}$.

Finally, let $G = G_{13}^{\lambda}$. {\it Proof of Step 1.} For any $F(\alpha_1,\alpha_2, \alpha_3, \alpha_4, ,\alpha_5,\alpha,\beta) \in \mathcal{G}^*$,
by Theorem \ref{picture K-orbit}, the generic K-orbit $\Omega_F$ belongs to $\mathcal{F}_G$ if and only if $\alpha_4\alpha_5 \neq 0$. 
Furthermore, if we denote by $v$ the element $(x^*_1, x^*_2, x^*_3, x^*_4, x^*_5, x^*, y^*) \in \G^*$ then
\[\begin{array}{r}
\Omega_F=\Bigg\{v \in \mathcal{G}^* \, \bigg| \; \frac{x^*_2x^*_5-x^*_3x^*_4}{{x^{*}_4}^2+{x^{*}_5}^2}e^{\lambda \arctan\frac{x^*_4}{x^*_5}}
=\frac{\alpha_2\alpha_5-\alpha_3\alpha_4}{{\alpha_4}^2+{\alpha_5}^2}e^{\lambda\arctan\frac{\alpha_4}{\alpha_5}} \Bigg\}.\end{array}\]
On the open submanifold $V_G = V_{3}$ as in \eqref{FoliatedManifold2}, we consider the following differential system $S_{G}$:
\[\begin{cases}
\mathfrak{X}_1:=\frac{\partial}{\partial x^*_1}
\\
\mathfrak{X}_2:=  
x^*_2\frac{\partial}{\partial  x^*_2}  +x^*_3\frac{\partial}{\partial  x^*_3} + x^*_4\frac{\partial}{\partial  x^*_4} + x^*_5\frac{\partial}{\partial  x^*_5}
\\
\mathfrak{X}_3:= 
x^*_3\frac{\partial}{\partial  x^*_2}  - x^*_2\frac{\partial}{\partial  x^*_3} + \lambda x^*_5\frac{\partial}{\partial  x^*_4} -\lambda x^*_4\frac{\partial}{\partial  x^*_5}
\\
\mathfrak{X}_4:=x^*_4\frac{\partial}{\partial  x^*_2} +x^*_5\frac{\partial}{\partial  x^*_3}  
\\
\mathfrak{X}_5:= \frac{\partial}{\partial x^*} 
\\
\mathfrak{X}_6:=\frac{\partial}{\partial y^*} \cdot
\end{cases}
\]
Obviously, $\Rank(S_G)=6$ and all $\mathfrak{X}_i \,(i=1, \ldots, 6)$ are smooth over $V_{G}$. 
Moreover, we will show that $S_{G}$ generates $\mathcal{F}_{G}$, i.e. each K-orbit $\Omega$ from $\F_G$ is a 
maximal connected integral submanifold of $S_{G}$.

\noindent First, we consider 
$\mathfrak{X}_1, \mathfrak{X}_5, \mathfrak{X}_6$. Clearly, their flows are determined as follows
\[	\begin{array}{ll}
\theta^{\mathfrak{X}_1}_{x^*_1-\alpha_1}:& F \mapsto \theta^{\mathfrak{X}_1}_{x^*_1-\alpha_1}(F):=(x^*_1, \alpha_2,\alpha_3, \alpha_4, \alpha_5, \alpha, \beta)\\
\theta^{\mathfrak{X}_5}_{x^*-\alpha}:& F \mapsto \theta^{\mathfrak{X}_5}_{x^*-\alpha}(F):=(\alpha_1, \alpha_2,\alpha_3, \alpha_4, \alpha_5, x^*, \beta)\\
\theta^{\mathfrak{X}_6}_{y^*-\beta}:& F \mapsto \theta^{\mathfrak{X}_6}_{y^*-\beta}(F):=(\alpha_1, \alpha_2,\alpha_3, \alpha_4, \alpha_5, \alpha, y^*).
\end{array}
\]
Next, we consider $\mathfrak{X}_2:= x^*_2\frac{\partial}{\partial  x^*_2}  +x^*_3\frac{\partial}{\partial  x^*_3} + x^*_4\frac{\partial}{\partial  x^*_4} + x^*_5\frac{\partial}{\partial  x^*_5}$. For some positive $\epsilon \in {\mathbb R}$, assume that
\[\varphi: t \mapsto \varphi(t)=\bigl(x^*_1(t), x^*_2(t), x^*_3(t), x^*_4(t), x^*_5(t), x^*(t), y^*(t)\bigr); t \in (-\epsilon, \epsilon)\]
is an integral curve of $\mathfrak{X}_2$ passing $F=\varphi(0)$.
Hence, we have $\varphi'(t)={\X_2}_{\varphi(t)}$, and this means that
\begin{align}\label{G13X2}
& \sum_{i=1}^{5}x^{*'}_i(t)\frac{\partial}{\partial x^*_i}+x^{*'}(t)\frac{\partial}{\partial x^*}+y^{*'}(t)\frac{\partial}{\partial y^*} =
x^*_2(t)\frac{\partial}{\partial  x^*_2}  +x^*_3(t)\frac{\partial}{\partial  x^*_3} + x^*_4(t)\frac{\partial}{\partial  x^*_4} + x^*_5(t)\frac{\partial}{\partial  x^*_5} \nonumber\\
\Leftrightarrow &  \begin{cases}
x^{*'}_1(t)= x^{*'}(t)= y^{*'}(t)=0
\\
x^{*'}_2(t)= x^{*}_2(t)
\\
x^{*'}_3(t)=x^{*}_3(t)
\\
x^{*'}_4(t)=x^*_4(t)
\\
x^{*'}_5(t)=x^*_5(t).
\end{cases}
\end{align}
Combining with condition $F=\varphi(0)$, we obtain
\begin{equation*}\label{X2G13}
x^*_1=\alpha_1, \, x^*_2= \alpha_2e^{t},\, x^*_3= \alpha_3e^{t},\, x^*_4=\alpha_4e^t,\, x^*_5=\alpha_5e^{t},\, x^*=\alpha,\, y^*=\beta.
\end{equation*}
Therefore, the flow of $\mathfrak{X}_2$ is
\[
\theta^{\X_2}_{x}: F \mapsto \theta^{\X_2}_{x}(F):=(\alpha_1, \alpha_2,\alpha_3e^{x}, \alpha_4e^x, \alpha_5e^{x}, \alpha, \beta).
\]
Next, we consider $\mathfrak{X}_3:= x^*_3\frac{\partial}{\partial  x^*_2}  - x^*_2\frac{\partial}{\partial  x^*_3} + \lambda x^*_5\frac{\partial}{\partial  x^*_4} -\lambda x^*_4\frac{\partial}{\partial  x^*_5}$. For some positive $\epsilon \in {\mathbb R}$, assume that
\[\varphi: t \mapsto \varphi(t)=\bigl(x^*_1(t), x^*_2(t), x^*_3(t), x^*_4(t), x^*_5(t), x^*(t), y^*(t)\bigr); t \in (-\epsilon, \epsilon)\]
is an integral curve of $\mathfrak{X}_3$ passing $F=\varphi(0)$.
Hence, we have $\varphi'(t)={\X_3}_{\varphi(t)}$, that is
\begin{align}\label{G13X3*}
& \sum_{i=1}^{5}x^{*'}_i(t)\frac{\partial}{\partial x^*_i}+x^{*'}(t)\frac{\partial}{\partial x^*}+y^{*'}(t)\frac{\partial}{\partial y^*}= 
x^*_3(t)\frac{\partial}{\partial  x^*_2}  - x^*_2(t)\frac{\partial}{\partial  x^*_3} + \lambda x^*_5(t)\frac{\partial}{\partial  x^*_4} -\lambda x^*_4(t)\frac{\partial}{\partial  x^*_5} \nonumber\\
\Leftrightarrow &  \begin{cases}
x^{*'}_1(t)= x^{*'}(t)= y^{*'}(t)=0
\\
x^{*'}_2(t)= x^{*}_3(t)
\\
x^{*'}_3(t)=-x^{*}_2(t)
\\
x^{*'}_4(t)=\lambda x^*_5(t)
\\
x^{*'}_5(t)=-\lambda x^*_4(t).
\end{cases}
\end{align}
Combining with condition $F=\varphi(0)$, system \eqref{G13X3*} gives us
\begin{equation*}\label{X3G13*}
\begin{array}{ll}
&x^*_1=\alpha_1,
\\
&x^*_2= \alpha_2\cos{t}+\alpha_3\sin{t} ,
\\
&x^*_3= -\alpha_2\sin{t}+\alpha_3\cos{t},
\\
&x^*_4=(\alpha_4\cos{t}+\alpha_5\sin{t})e^{\lambda t},
\\
& x^*_5=(-\alpha_4\sin{t}+\alpha_5\cos{t})e^{\lambda t},
\\
& x^*=\alpha, \quad y^*=\beta.
\end{array}
\end{equation*}
Hence, the flow of $\mathfrak{X}_3$ is
\[\begin{array}{l}
\theta^{\X_3}_{y}: F \mapsto \theta^{\X_3}_{y}(F):=\big(\alpha_1, \alpha_2\cos{y}+\alpha_3\sin{y},-\alpha_2\sin{y}+\alpha_3\cos{y},\\ \hspace{5cm}(\alpha_4\cos{y}+\alpha_5\sin{y})e^{\lambda y}, (-\alpha_4\sin{y}+\alpha_5\cos{y})e^{\lambda y}, \alpha, \beta\big).
\end{array}	
\]
Now, we consider $\X_4:=x^*_4 \frac{\partial}{\partial  x^*_2} +x^*_5\frac{\partial}{\partial  x^*_3}$. Assume that
\[\varphi: t \mapsto \varphi(t)=\bigl(x^*_1(t), x^*_2(t), x^*_3(t), x^*_4(t), x^*_5(t), x^*(t), y^*(t)\bigr)\]
is an integral curve of $\X_4$ passing $F=\varphi(0)$, where $t \in (-\epsilon, \epsilon) \subset \R$ for some positive $\epsilon \in \R$. 
Then, $\varphi'(t)={\X_4}_{\varphi(t)}$ which is equivalent to
\begin{align}\label{G13X4}
& \sum_{i=1}^{5}x^{*'}_i(t)\frac{\partial}{\partial x^*_i}+x^{*'}(t)\frac{\partial}{\partial x^*}+y^{*'}(t)\frac{\partial}{\partial y^*}=x^*_4(t) \frac{\partial}{\partial  x^*_2} +x^*_5(t)\frac{\partial}{\partial  x^*_3} \nonumber\\
\Leftrightarrow &  \begin{cases}
x^{*'}_1(t)= x^{*'}_4(t)=
x^{*'}_5(t)= x^{*'}(t)= y^{*'}(t)=0\\
x^{*'}_2(t)=x^*_4(t)\\
x^{*'}_3(t)=x^*_5(t). 
\end{cases} 
\end{align}
Since $F=\varphi(0)$, system \eqref{G13X4} gives us
\begin{align*}\label{X4G13}
&x^*_1=\alpha_1,\;x^*_2= \alpha_2+\alpha_4t,\; x^*_3=\alpha_3+\alpha_5t,\;
x^*_4=\alpha_4,	\; x^*_5=\alpha_5,\; x^*=\alpha,\; y^*=\beta.
\end{align*}
Thus, the flow of $\mathfrak{X}_4$ is
\[
\theta^{\X_4}_{x_1}: F \mapsto \theta^{\X_4}_{x_1}(F):=(\alpha_1, \alpha_2+\alpha_4x_1,\alpha_3+\alpha_5x_1, \alpha_4, \alpha_5, \alpha, \beta).
\]
By setting $\theta=\theta^{\mathfrak{X}_6}_{y^*-\beta}\circ\theta^{\mathfrak{X}_5}_{x^*-\alpha}\circ\theta^{\mathfrak{X}_4}_{x_1}\circ\theta^{\mathfrak{X}_3}_{y}\circ\theta^{\mathfrak{X}_2}_{x}\circ\theta^{\mathfrak{X}_1}_{x^*_1-\alpha_1}$, we have
\[
\begin{array}{rl}
\theta(F)=&\theta^{\mathfrak{X}_6}_{y^*-\beta}\circ\theta^{\mathfrak{X}_5}_{x^*-\alpha}\circ\theta^{\mathfrak{X}_4}_{x_1}\circ\theta^{\mathfrak{X}_3}_{y}\circ\theta^{\mathfrak{X}_2}_{x}\circ\theta^{\mathfrak{X}_1}_{x^*_1-\alpha_1}(F)
\\
=&(x^*_1, x^*_2, x^*_3, x^*_4, x^*_5, x^*, y^*).
\end{array}
\]
where $x^*_1, x^*, y^* \in \R$ and
\[
\begin{array}{ll}
x^*_2=\alpha_2e^{x}\cos(y)+\alpha_3e^{x}\sin(y)+\alpha_4x_1e^{x}\cos(y)+\alpha_5x_1e^{x}\sin(y) ,
\\
x^*_3= -\alpha_2e^{x}\sin(y)+\alpha_3e^{x}\cos(y)-\alpha_4x_1e^{x}\sin(y)+\alpha_5x_1e^{x}\cos(y), 
\\
x^*_4=\alpha_4e^{x+\lambda y}\cos(y)+\alpha_5e^{x+\lambda y}\sin(y),
\\
x^*_5=-\alpha_4e^{x+\lambda y}\sin(y)+\alpha_5e^{x+\lambda y}\cos(y) .
\end{array}	
\]
By direct calculations, we get 
\[\begin{array}{ll}
&	x^*_2
= \frac{x^*_3x^*_4}{x^*_5} + \frac{\alpha_2\alpha_5-\alpha_3\alpha_4}{{\alpha_4}^2+{\alpha_5}^2}\frac{e^{\lambda\arctan\frac{\alpha_4}{\alpha_5}}\left( {x^{*}_4}^2+{x^{*}_5}^2\right)  }{x^*_5e^{\lambda \arctan\frac{x^*_4}{x^*_5}}}
\\
\Leftrightarrow &\frac{x^*_2x^*_5-x^*_3x^*_4}{{x^{*}_4}^2+{x^{*}_5}^2}e^{\lambda \arctan\frac{x^*_4}{x^*_5}}
=\frac{\alpha_2\alpha_5-\alpha_3\alpha_4}{{\alpha_4}^2+{\alpha_5}^2}e^{\lambda\arctan\frac{\alpha_4}{\alpha_5}} .
\end{array}\]
Hence, 
$\left\lbrace \theta(F) \, | \, x^*_1, x^*_3, x^*_4, x^*_5, x^*, y^* \in \R; {x^*_4}^2+{x^*_5}^2\neq0 \right\rbrace \equiv\Omega_F$,
i.e. $\Omega_F$ is a maximal connected integral submanifold of $S_{G}$. 
Therefore, $S_{G}$ generates $\mathcal{F}_{G}$ and $(V_{G},\mathcal{F}_{G})$ is a six-dimensional foliation for $G = G_{13}^{\lambda}$.	

\noindent {\it Proof of Step 2.} Similar to the case $G=G_{4}^{\lambda_1,\lambda_2}$, we also show that the foliation $(V_{G},\mathcal{F}_{G})$ is measurable in the sense of Connes for $G = G_{13}^{\lambda}$.

	For the remaining cases, the proof is entirely analogous. 
	Here, we only give the systems $S_G$ in Table \ref{tab3} in which the disappearance of coefficients means they are zeros.
	 
	\begin{longtable}{|p{.07\textwidth} | p{.03\textwidth} | p{.05\textwidth} | p{.12\textwidth} | p{.12\textwidth} |
		 p{.12\textwidth} | p{.12\textwidth} | p{.05\textwidth} | p{.05\textwidth}|}
		\caption{The differential systems $S_G$}\label{tab3} \\
		\hline \multirow{2}{*}{$S_G$} & \multirow{2}{*}{$\X_i$} & \multicolumn{7}{c|}{Coefficients} \\
			\cline{3-9} & & $\frac{\partial}{\partial x^*_1}$ & $\frac{\partial}{\partial x^*_2}$ & $\frac{\partial}{\partial x^*_3}$ 
			& $\frac{\partial}{\partial x^*_4}$ & $\frac{\partial}{\partial x^*_5}$ & $\frac{\partial}{\partial x^*}$ & $\frac{\partial}{\partial y^*}$ \endfirsthead
		\caption*{Table 3 (continued)} \\
		\hline \multirow{2}{*}{$S_G$} & \multirow{2}{*}{$\X_i$} & \multicolumn{7}{c|}{Coefficients} \\
			\cline{3-9} & & $\frac{\partial}{\partial x^*_1}$ & $\frac{\partial}{\partial x^*_2}$ & $\frac{\partial}{\partial x^*_3}$ 
			& $\frac{\partial}{\partial x^*_4}$ & $\frac{\partial}{\partial x^*_5}$ & $\frac{\partial}{\partial x^*}$ & $\frac{\partial}{\partial y^*}$ \\
		\hline \endhead \hline
			 \multirow{6}{*}{$S_{G_1^\lambda}$} & $\X_1$ & 1 &  &  &  &  &  &  \\
			 	& $\X_2$ &  &  & $x_3^*$ &  & $x_5^*$ &  &  \\
			 	& $\X_3$ &  & $-x_2^*$ &  &  & $x_5^*$ &  &  \\
			 	& $\X_4$ &  & $x_4^*$ & $x_5^*$ &  &  &  &  \\
			 	& $\X_5$ &  &  &  &  &  & 1 &  \\
			 	& $\X_6$ &  &  &  &  &  &  & 1 \\ \hline
			\multirow{6}{*}{$S_{G_5}$} & $\X_1$ & 1 &  &  &  &  &  &  \\
			 	& $\X_2$ &  &  & $x_3^*$ &  & $x_5^*$ &  &  \\
			 	& $\X_3$ &  & $x_2^*$ &  & $2x_4^*$ & $x_5^*$ &  &  \\
			 	& $\X_4$ &  & $x_4^*$ & $x_5^*$ &  &  &  &  \\
			 	& $\X_5$ &  &  &  &  &  & 1 &  \\
			 	& $\X_6$ &  &  &  &  &  &  & 1 \\ \hline
			 \multirow{6}{*}{$S_{G_6^\lambda}$} & $\X_1$ & 1 &  &  &  &  &  &  \\
			 	& $\X_2$ &  &  & $x_3^*$ &  & $x_5^*$ &  &  \\
			 	& $\X_3$ &  & $x_2^*$ & $\lambda x_3^*$ & $2x_4^*$ & $(\lambda +1)x_5^*$ &  &  \\
			 	& $\X_4$ &  & $x_4^*$ & $x_5^*$ &  &  &  &  \\
			 	& $\X_5$ &  &  &  &  &  & 1 &  \\
			 	& $\X_6$ &  &  &  &  &  &  & 1 \\ \hline
			 \multirow{6}{*}{$S_{G_7}$} & $\X_1$ & 1 &  &  &  &  &  &  \\
			 	& $\X_2$ &  & $x_2^*$ & $x_3^*$ & $x_4^*$ & $x_5^*$ &  &  \\
			 	& $\X_3$ &  & $x_2^* + x_5^*$ &  & $2x_4^*$ & $x_5^*$ &  &  \\
			 	& $\X_4$ &  & $x_4^*$ & $x_5^*$ &  &  &  &  \\
			 	& $\X_5$ &  &  &  &  &  & 1 &  \\
			 	& $\X_6$ &  &  &  &  &  &  & 1 \\ \hline
			  \multirow{6}{*}{$S_{G_8^\lambda}$} & $\X_1$ & 1 &  &  &  &  &  &  \\
			 	& $\X_2$ &  & $(\lambda +1)x_2^*$ & $\lambda x_3^*$ & $(\lambda +2)x_4^*$ & $(\lambda +1)x_5^*$ &  &  \\
			 	& $\X_3$ &  & $x_2^* + x_5^*$ & $x_3^*$ & $x_4^*$ & $x_5^*$ &  &  \\
			 	& $\X_4$ &  & $x_4^*$ & $x_5^*$ &  &  &  &  \\
			 	& $\X_5$ &  &  &  &  &  & 1 &  \\
			 	& $\X_6$ &  &  &  &  &  &  & 1 \\ \hline
			 \multirow{6}{*}{$S_{G_{11}}$} & $\X_1$ & 1 &  &  &  &  &  &  \\
			 	& $\X_2$ &  & $x_2^*$ & $x_3^*$ & $x_4^*$ & $x_5^*$ &  &  \\
			 	& $\X_3$ &  & $x_3^*$ &  & $x_4^* + x_5^*$ & $x_5^*$ &  &  \\
			 	& $\X_4$ &  & $x_4^*$ & $x_5^*$ &  &  &  &  \\
			 	& $\X_5$ &  &  &  &  &  & 1 &  \\
			 	& $\X_6$ &  &  &  &  &  &  & 1 \\ \hline
			  \multirow{6}{*}{$S_{G_{14}^{\lambda_1\lambda_2}}$} & $\X_1$ & 1 &  &  &  &  &  &  \\
			 	& $\X_2$ &  & $\lambda_1x_2^*$ & $\lambda_1x_3^*$ & $(1+\lambda_1)x_4^*$ & $(1+\lambda_1)x_5^*$ &  &  \\
			 	& $\X_3$ &  & $\lambda_2x_2^*+x_3^*$ & $-x_2^*+\lambda_2x_3^*$  & $\lambda_2x_4^* - x_5^*$ & $-x_4^*+\lambda_2x_5^*$ &  &  \\
			 	& $\X_4$ &  & $x_4^*$ & $x_5^*$ &  &  &  &  \\
			 	& $\X_5$ &  &  &  &  &  & 1 &  \\
			 	& $\X_6$ &  &  &  &  &  &  & 1 \\ \hline
			  \multirow{6}{*}{$S_{G_{15}}$} & $\X_1$ & 1 &  &  &  &  &  &  \\
			 	& $\X_2$ &  & $x_3^*$ & $-x_2^*$ & $x_5^*$ & $-x_4^*$ &  &  \\
			 	& $\X_3$ &  & $x_2^* + x_5^*$ & $x_3^* - x_4^*$ & $x_4^*$ & $x_5^*$ &  &  \\
			 	& $\X_4$ &  & $x_4^*$ & $x_5^*$ &  &  &  &  \\
			 	& $\X_5$ &  &  &  &  &  & 1 &  \\
			 	& $\X_6$ &  &  &  &  &  &  & 1 \\ \hline
			  \multirow{6}{*}{$S_{G_{16}^\lambda}$} & $\X_1$ & 1 &  &  &  &  &  &  \\
			 	& $\X_2$ &  & $x_3^* + x_5^*$ & $-x_2^*$ & $x_5^*$ & $-x_4^*$ &  &  \\
			 	& $\X_3$ &  & $x_2^* + \lambda x_5^*$ & $x_3^* - \lambda x_4^*$ & $x_4^*$ & $x_5^*$ &  &  \\
			 	& $\X_4$ &  & $x_4^*$ & $x_5^*$ &  &  &  &  \\
			 	& $\X_5$ &  &  &  &  &  & 1 &  \\
			 	& $\X_6$ &  &  &  &  &  &  & 1 \\ \hline
	\end{longtable}	
	The proof of Theorem \ref{FormedFoliation} is complete.
\end{proof}

\begin{defn}
	For each $G \in \A$, the foliation $(V_G, \F_G)$ is called the \emph{generalized MD-foliation} (for short, \emph{GMD-foliation}) associated to $G$.
\end{defn}

\begin{rem}
	By Remark \ref{RemarkGenericOrbits} and Theorem \ref{FormedFoliation}, we have exactly sixteen families of measurable GMD-foliations $\{(V_G, \F_G) / G \in \A\}$. 
	However, there are only three types of foliated manifolds $V_G$. Namely, we~have 	
	\begin{itemize}
		\item For all $G \in \A_1 \setminus \{G_{12}^{\lambda}\}$, $V_G \equiv V_1$ as in \eqref{FoliatedManifold};		
		\item For $G = G_{12}^{\lambda}$, $V_G \equiv V_{2}$ as in \eqref{FoliatedManifold1};  		
		\item For all $G \in \A_2 \big\}$, $V_G \equiv V_{3}$ as in \eqref{FoliatedManifold2}.
		
	\end{itemize}
\end{rem}

It is this observation that leads us to the following result about
the topological classification of sixteen families of GMD-foliations. Namely, we have the following theorem.

\begin{thm}\label{TopoClass}
	The topology of GMD-foliations has the following properties.
	\begin{enumerate}[1.]
		\item \label{TopoTypes} There exist exactly three topological types of sixteen families of considered GMD-foliations as follows:
		\begin{enumerate}
			\item \label{TopoTypes1} {\it The Type $\F_1$}: This type contains eleven families of foliated manifolds from $\big\{(V_G, \F_G) / G \in \A_1 \setminus \{G_{12}^{\lambda}\} \big\}$ on the same foliated manifold $V_G \equiv V_{1}$ as in \eqref{FoliatedManifold}; 
			\item \label{TopoTypes2} {\it The Type $\F_2$}: This type contains only one familiy of foliated manifolds from $\big\{\big(V_2, \F_{G_{12}^\lambda}\big)\big\}$ on the same foliated manifold $V_G \equiv V_{2}$ as in \eqref{FoliatedManifold1}; 
			\item \label{TopoTypes3} {\it The Type $\F_3$}: This type contains four families of foliated manifolds from $\big\{(V_G, \F_G) / G \in \A_2 \big\}$ on the same foliated manifold $V_G \equiv V_{3}$ as in \eqref{FoliatedManifold2}.
			\end{enumerate}	
		\item \label{Type} Furthermore, we have:
		\begin{enumerate}
			\item \label{Type1} All GMD-foliations of type $\F_1$ are trivial fibrations with connected fibers on the disjoint union of four copies of the real line $\R$.
			
			\item \label{Type2} The GMD-foliation of type $\F_2$ is a trivial fibration with connected fibers on the disjoint union of two copies of the real line $\R$.
			
			\item \label{Type3} All GMD-foliations of type $\F_3$ are trivial fibrations with connected fibers on a copy of the real line $\R$.
		\end{enumerate}
	\end{enumerate}
\end{thm}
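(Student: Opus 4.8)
The plan is to prove the two assertions of the theorem in a way that makes the first (the trichotomy of types) largely a corollary of the second (the explicit fibration structure). For assertion~\ref{TopoTypes} the starting point is the observation recorded just before the theorem that only three foliated manifolds occur, namely $V_1,V_2,V_3$ of \eqref{FoliatedManifold}, \eqref{FoliatedManifold1}, \eqref{FoliatedManifold2}. These are pairwise non-homeomorphic for a cheap reason: by \eqref{SS1}--\eqref{SS2} the manifold $V_1$ has exactly four connected components, by \eqref{SS3} the manifold $V_2$ has two, while $V_3\equiv\R^3\times(\R^2\setminus\{(0,0)\})\times\R^2$ is connected, and the number of connected components is a topological invariant. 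Hence the sixteen families split into \emph{at least} three types, and it remains only to check that within each class the foliations are mutually foliated-homeomorphic, which follows as soon as assertion~\ref{Type} realizes them all as trivial fibrations with the same base and the same fibre. Thus the whole theorem reduces to producing, for one representative of each of the three subfamilies singled out in the proof of Theorem~\ref{FormedFoliation}, an explicit trivialization whose fibres are the generic K-orbits.

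For the eleven groups of Type $\F_1$ and for $G_{12}^\lambda$ (Type $\F_2$) I would argue uniformly. Inspecting the orbit equations in Table~\ref{tab2}, one sees that in every such case the first integral $f_G$ cutting out a generic orbit is an \emph{affine} function of the coordinate $x_2^*$ whose leading coefficient is a nowhere-vanishing product of powers and exponentials of $x_4^*,x_5^*$ on each connected component of $V_G$. Consequently, on a fixed component the assignment
\[
\Psi\colon F\longmapsto\bigl(f_G(F),\,x_1^*,x_3^*,x_4^*,x_5^*,x^*,y^*\bigr)
\]
is a diffeomorphism onto $\R\times\R^6$: it is smooth and is inverted by solving the affine relation for $x_2^*$. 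Under $\Psi$ the generic K-orbits become the fibres $\{f_G=\mathrm{const}\}$, each diffeomorphic to $\R^6$ and hence connected, and $f_G$ is onto $\R$ because it is affine with non-zero slope in $x_2^*$. Running this over the four components of $V_1$ (resp.\ the two components of $V_2$) yields the trivial fibration of Type $\F_1$ (resp.\ $\F_2$) over the disjoint union of four (resp.\ two) copies of $\R$ with connected fibres $\cong\R^6$, which is precisely assertions~\ref{Type1} and~\ref{Type2}.

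For Type $\F_3$ the manifold $V_3$ is connected, so the base should be a single copy of $\R$, and this is where the real work lies. For $G_{15}$ the first integral in Table~\ref{tab2} involves only the globally defined functions $\tfrac{x_3^*x_4^*-x_2^*x_5^*}{{x_4^*}^2+{x_5^*}^2}$ and $\tfrac12\ln({x_4^*}^2+{x_5^*}^2)$, so $f_{G_{15}}\colon V_3\to\R$ is a genuine smooth submersion and the previous paragraph applies verbatim, giving connected fibres and a single base $\R$; as $\R$ is contractible the fibration is automatically trivial. The obstacle is the remaining groups $G_{13}^\lambda,G_{14}^{\lambda_1\lambda_2},G_{16}^\lambda$ (for $\lambda,\lambda_2\neq0$), whose first integrals contain a transcendental term $e^{\lambda\arctan(\cdot)}$ or an additive $\arctan$: since $\arctan$ is only locally defined on the punctured plane $(\R^2\setminus\{0\})_{(x_4^*,x_5^*)}$, the naive first integral is \emph{discontinuous} across the cut and does not itself label the leaves, even though each orbit crosses that cut. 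My plan to overcome this is to exhibit a global smooth transversal $T\cong\R$ meeting every leaf exactly once; for $G_{13}^\lambda$ one takes $T=\{x_4^*=0,\ x_5^*=1,\ x_1^*=x_3^*=x^*=y^*=0\}$, on which the orbit equation collapses to $x_2^*=\mathrm{const}$, so $T$ is cut by the value of $x_2^*$ and meets the leaf of label $c$ only at $x_2^*=c$. Using the flows of the generating fields $\X_2,\X_3$ of Theorem~\ref{FormedFoliation} (a rotation combined with a radial dilation in the $(x_4^*,x_5^*)$-plane) one flows each point of $V_3$ onto $T$ along its leaf, and the resulting flow-to-$T$ map is the desired fibration $p\colon V_3\to T\cong\R$. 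The key technical point to verify is that $p$ is globally continuous: one must show that the $e^{\pm\lambda\pi}$-jump of $\arctan$ across the cut is exactly cancelled by the $e^{2\pi\lambda}$ radial rescaling that $\X_2$ produces along the leaf, so that the \emph{intrinsic} leaf-label is single-valued on $V_3$. Granting this, $p$ is a submersion with connected fibres (each orbit wraps once around the $(x_4^*,x_5^*)$-circle, so the fibre is $\cong\R^5\times S^1$, consistent with $V_3\simeq S^1$), its base is one copy of $\R$, and contractibility of $\R$ trivializes the fibration, giving assertion~\ref{Type3}.

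These three trivializations then close assertion~\ref{TopoTypes}: all eleven Type $\F_1$ foliations are foliated-diffeomorphic to four copies of the trivial fibration $\R\times\R^6\to\R$, the unique Type $\F_2$ foliation to two such copies, and all four Type $\F_3$ foliations to $\R\times(\R^5\times S^1)\to\R$; the three models are pairwise inequivalent by the component count above, so there are exactly three types. I expect the verification of the global continuity of $p$ in Type $\F_3$ --- the compensation of the $\arctan$-monodromy by the dilation flow --- to be the main obstacle; everything else is a routine unpacking of Table~\ref{tab2} together with the flows computed in the proof of Theorem~\ref{FormedFoliation}.
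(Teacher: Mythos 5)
Your strategy for Types $\F_1$ and $\F_2$ is sound and, modulo one slip noted below, close in spirit to what is needed; but the decisive step of your plan for Type $\F_3$ --- the claim that the $\arctan$-monodromy is ``exactly cancelled'' by the dilation along the leaf, so that your transversal $T=\{x_4^*=0,\,x_5^*=1,\,x_1^*=x_3^*=x^*=y^*=0\}$ meets every leaf exactly once --- is false for $G_{13}^\lambda$ with $\lambda\neq0$, and this is a genuine gap. Compute directly with the coadjoint action encoded in Table \ref{tab1} and in the paper's $\exp(\ad_U)$ for $G_{13}^\lambda$: $\exp(2\pi Y)$ fixes $(x_2^*,x_3^*)$ (the block $S_{01}$ is a full rotation) but multiplies $(x_4^*,x_5^*)$ by $e^{2\pi\lambda}$ (the block $S_{\lambda1}$ is rotation \emph{plus} dilation), while $\exp(sX)$ rescales all of $x_2^*,\dots,x_5^*$ by the same factor. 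Hence $g=\exp(2\pi Y)\exp(-2\pi\lambda X)$ carries $F=(0,1,0,0,1,0,0)$ to $(0,e^{-2\pi\lambda},0,0,1,0,0)$, a point of the \emph{same} leaf; iterating, your transversal meets the leaf of $F$ in the infinite set $\{x_2^*=e^{-2\pi k\lambda}:k\in\mathbb{Z}\}$. Equivalently, along one circuit of the $(x_4^*,x_5^*)$-circle the quantity $x_2^*x_5^*-x_3^*x_4^*$ is multiplied by $e^{2\pi\lambda}$, the denominator ${x_4^*}^2+{x_5^*}^2$ by $e^{4\pi\lambda}$, and the $\arctan$ factor returns to its initial value, so the putative leaf-label is multiplied by $e^{-2\pi\lambda}\neq1$: there is no compensation. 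The consequences run against your picture: for $\lambda\neq0$ the generic leaves are non-closed spirals homeomorphic to $\R^6$ (not $\R^5\times S^1$; the ``wraps once'' description holds only when the label has trivial monodromy, e.g.\ $\lambda=0$ or $G_{15}$), each accumulating on the exceptional cylindrical leaf $\{x_2^*x_5^*=x_3^*x_4^*\}$, and leaves of a fixed sign of label are parametrized by the circle $\R_{>0}/e^{2\pi\lambda\mathbb{Z}}$. No continuous $p\colon V_3\to\R$ constant on leaves can separate them, so your flow-to-$T$ map cannot be made globally continuous by any bookkeeping of branches.

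For comparison, the paper takes the opposite order: it first proves assertion \ref{TopoTypes} by exhibiting explicit leafwise homeomorphisms ($h_1,\dots,h_{11}$) between the foliations on each fixed $V_i$, and only then verifies the fibration statement of assertion \ref{Type}, for Type $\F_3$ doing so solely for the representative $G_{13}^{0}$, whose invariant $\frac{x_2^*x_5^*-x_3^*x_4^*}{{x_4^*}^2+{x_5^*}^2}$ is globally smooth on $V_3$ (there all leaves are closed, of homotopy type $S^1$), transporting the conclusion to $\lambda\neq0$ via $h_{8(\lambda)}$. You should know, however, that your worry is not dispelled by that route either: $h_{8(\lambda)}$ involves $e^{-\lambda\arctan(x_4^*/x_5^*)}$ and has unequal one-sided limits $e^{\mp\lambda\pi/2}x_2^*$ across $\{x_5^*=0\}$, so it is not continuous for $\lambda\neq0$; and since by the computation above $\F_{G_{13}^0}$ has all leaves closed while $\F_{G_{13}^{\lambda\neq0}}$ has non-closed spiral leaves, no leaf-preserving homeomorphism between them can exist. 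In other words, the monodromy you correctly identified as the main obstacle is a real obstruction to the statement in the cases with nontrivial monodromy, not merely to your proof. Finally, a small repairable slip on the $\F_1$ side: for $G_1^\lambda$ the generic orbit is $\{x_4^*=\alpha_4,\ \alpha_5x_5^*>0\}$, so its first integral is $x_4^*$ and is \emph{not} affine in $x_2^*$; taking $f_{G_1^\lambda}=x_4^*$, a submersion onto $\R_+$ or $\R_-$ on each component of $V_1$, your trivialization argument for Types $\F_1$ and $\F_2$ then goes through as written.
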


\begin{proof} 	
	Recall that two foliations $\F$ and $\F'$ on the same foliated manifold $V$ are \emph{topologically equivalent} 
	if there exists a homeomorphism $h \colon V \to V$ which sends leaves of $\F$ onto those of $\F'$.
	\begin{enumerate}
		\item Note that the assertion in Item \eqref{TopoTypes1} of Theorem \ref{TopoClass} for cases $G_2$, $G_3$, $G_9$ and $G^\lambda_{10}$ had been proved in \cite{t-v}. The proof is quite analogous for remaining cases. First, we prove the topological classification of considered GMD-foliations in the first assertion of Theorem \ref{TopoClass}.
		\begin{enumerate}
			\item Consider maps $h_{1(\lambda)}, h_{2(\lambda_1\lambda_2)}, h_{3(\lambda)}, h_{4}, h_{5}, h_{6}$ from $V_{1}$ to $V_{1}$ 
			which are defined as follows:
			\[
				\begin{array}{l}
					h_{1(\lambda)}(v) := \left( x^*_1, x^*_2 , x^*_3,  x^*_2-\frac{x^*_3 x^*_4}{x^*_5}, x^*_5, x^*, y^*\right), \\
					h_{2(\lambda_1\lambda_2)}(v) := \left( x^*_1, 
						\frac{x^*_2{x^*_5}^{\frac{1}{\lambda_2-\lambda_1-1}}}{{x^*_4}^{\frac{1+\lambda_1}{\lambda_2-\lambda_1-1}}}, 
						\frac{x^*_3{x^*_5}^{\frac{1}{\lambda_2-\lambda_1-1}}}{{x^*_4}^{\frac{1+\lambda_1}{\lambda_2-\lambda_1-1}}}, 
						x^*_4, x^*_5, x^*, y^*\right), \\
					h_3(v) := \left( x^*_1, \left( x^*_2 +  \ln \vert x^*_4 \vert\right) x^*_5, 
						\left( x^*_3+\frac{x^*_5 \ln \vert x^*_5 \vert}{x^*_4}\right) x^*_5, x^*_4, x^*_5, x^*, y^*\right), \\
					h_{4(\lambda)}(v) := \left( x^*_1, \left( x^*_2 - (1+\lambda) \ln \vert x^*_4 \vert\right) x^*_5, 
						\left( x^*_3 - \frac{(2+\lambda)x^*_5 \ln \vert x^*_5 \vert}{x^*_4}\right) x^*_5, x^*_4, x^*_5, x^*, y^*\right), \\
					h_5(v) := \left( x^*_1,\frac{x^*_2x^*_5}{e^{\frac{x^*_4}{x_5}}} , \frac{x^*_3x^*_5}{e^{\frac{x^*_4}{x_5}}}, x^*_4, x^*_5, x^*, y^*\right), \\
					h_6(v) := \left( x^*_1,\frac{x^*_2}{\sqrt{\vert x^*_4\vert}} , \frac{x^*_3}{\sqrt{\vert x^*_4\vert}}, x^*_4, x^*_5, x^*, y^*\right) 
				\end{array}
			\]
			where $v: = (x^*_1,x^*_2, x^*_3, x^*_4, x^*_5, x^*, y^*) \in V_1$. It is clear that all of above maps are homeomorphisms.
			Now, we take an arbitrary leaf $L$ of $(V_{1}, \F_{G_2})$. Without loss of generality, assume that $L \subset {V_{1}}_{++} \subset V_{1}$
			(see \eqref{SS1} and \eqref{SS2}), i.e. $L$ is determined as follows
			\[
				L  = \left\lbrace v \in V_{1} \, \Big| \, x^*_2-\frac{x^*_3 x^*_4}{x^*_5} = c; \, \, x^*_4 > 0, \, x^*_5 > 0\right\rbrace
			\]
			where $c \in \R$ is some constant. For $\left( V_{1}, \mathcal{F}_{G_4^{\lambda_1\lambda_2}}\right) $, we consider the leaf 
			$\tilde{L} \subset {V_1}_{++} \subset V_1$ which is determined as follows
			\[
				\tilde{L} = \left\lbrace \tilde{v} \in V_1 \, \Big| \left( \tilde{x}_2-\frac{\tilde{x}_3\tilde{x}_4}{\tilde{x}_5}\right)
				\dfrac{\tilde{x}_4^{\frac{1+\lambda_1}{\lambda_2-\lambda_1-1}}}{{\tilde{x}_5}^{\frac{1}{\lambda_2-\lambda_1-1}}}=c; 
				\; \widetilde{x}_4>0,\, \widetilde{x}_5>0\right\rbrace
			\]
			where $\tilde{v} = (\tilde{x}_1, \tilde{x}_2, \tilde{x}_3, \tilde{x}_4, \tilde{x}_5, \tilde{x}, \tilde{y}) \in V_{1}$. 
			By the formula of the homeomorphism $h_{2(\lambda_1\lambda_2)}$, the fact that $h_{2(\lambda_1,\lambda_2)}(v) = \tilde{v}$ is equivalent to
			\[
				\begin{array}{rl}
					& \tilde{v} = \left( x^*_1,\frac{x^*_2{x^*_5}^{\frac{1}{\lambda_2-\lambda_1-1}}}{{x^*_4}^{\frac{1+\lambda_1}{\lambda_2-\lambda_1-1}}}, 
						\frac{x^*_3{x^*_5}^{\frac{1}{\lambda_2-\lambda_1-1}}}{{x^*_4}^{\frac{1+\lambda_1}{\lambda_2-\lambda_1-1}}}, x^*_4, x^*_5, x^*, y^*\right) \\		
					\Leftrightarrow &
						\begin{cases}
							\tilde{x}_1=x^*_1 \\
							\tilde{x}_2 = \frac{x^*_2{x^*_5}^{\frac{1}{\lambda_2-\lambda_1-1}}}{{x^*_4}^{\frac{1+\lambda_1}{\lambda_2-\lambda_1-1}}} \\
							\tilde{x}_3 = \frac{x^*_3{x^*_5}^{\frac{1}{\lambda_2-\lambda_1-1}}}{{x^*_4}^{\frac{1+\lambda_1}{\lambda_2-\lambda_1-1}}} \\
							\tilde{x}_4=x^*_4, \, \tilde{x}_5=x^*_5, \, \tilde{x}=x^*, \, \tilde{y}=y^*.
						\end{cases}
				\end{array}
			\]		
			Therefore
			\[
				\begin{array}{l l l l}
					v \in L & \Leftrightarrow & x^*_2-\frac{x^*_3 x^*_4}{x^*_5} = c; & x^*_4 > 0, \, x^*_5 > 0 \\		
					& \Leftrightarrow & \left( \tilde{x}_2-\frac{\tilde{x}_3\tilde{x}_4}{\tilde{x}_5}\right)
						\frac{\tilde{x}_4^{\frac{1+\lambda_1}{\lambda_2-\lambda_1-1}}}{{\widetilde{x}_5}^{\frac{1}{\lambda_2-\lambda_1-1}}}	= c;
						& \tilde{x}_4>0,\, \tilde{x}_5>0 \\
					& \Leftrightarrow & h_{2(\lambda_1\lambda_2)}(v) = \tilde{v} \in \tilde{L}
				\end{array}
			\]
			i.e. $h_{2(\lambda_1\lambda_2)}(L) = \tilde{L}$ for $L \subset {V_{1}}_{++}$. 
			Similarly, $h_{2(\lambda_1\lambda_2)}(L) = \tilde{L}$ for $L$ in ${V_{1}}_{-+}$, ${V_{1}}_{--}$, ${V_{1}}_{+-}$.
			This means that $h_{2(\lambda_1\lambda_2)}$ sends leaves of $\F_{G_2}$ onto those of $\F_{G_4^{\lambda_1\lambda_2}}$. 
			Therefore, $\left( V_1, \F_{G_2}\right)$ and $\left( V_1, \F_{G_4^{\lambda_1\lambda_2}}\right)$ are topological equivalent.
		
			In the same manner, we can verify that $h_{1(\lambda)}$, $h_3$, $h_{4(\lambda)}$ and $h_5$ send leaves of $\F_{G_2}$ 
			onto those of $\F_{G^\lambda_1}$, $\F_{G_7}$, $\F_{G_8^\lambda}$, $\F_{G_{11}}$, respectively.
			Besides, $h_6$ sends leaves of $\F_{G_3}$ onto those of $\F_{G_5}$ and $\F_{G_6^\lambda}$.  
			Finally, combining with \cite[Theorem 2.3]{t-v}, all these foliations are topologically equivalent. This type is denoted by $\F_1$. 
			
			\item By direct calculations, the map $h_{7(\lambda\neq-1)} \colon V_2 \cong \R^4 \times \left(\R \setminus \{0\}\right) \times \R^2 \to V_2$ 
			defined by
			\[
				h_{7(\lambda\neq-1)}(v) := \left(x^*_1, \frac{x^*_2{x^*_5}^{\frac{\lambda}{1+\lambda}}}{e^{\frac{\lambda x^*_4}{(1+\lambda)x^*_5}}}, 
					\frac{x^*_3{x^*_5}^{\frac{\lambda}{1+\lambda}}}{e^{\frac{\lambda x^*_4}{(1+\lambda)x^*_5}}},x^*_4,x^*_5,x^*,y^* \right)
			\]
			is a homeomorphism which sends leaves of $\F_{G_{12}^0}$ onto those of $\F_{G_{12}^\lambda}$.
			
			In fact, take an arbitrary leaf $L$ of $\F_{G_{12}^0}$. Without loss of generality, assume that $L \subset {V_{2}}_{+} \subset V_{2}$ 
			as in \eqref{FoliatedManifold2} and \eqref{SS3}, i.e. $L$ is determined as follows
			\[
				L  = \left\lbrace v \in V_2 \, \Big| \left(x^*_2-\frac{x^*_3x^*_4}{x^*_5}\right)\frac{1}{e^{\frac{x^*_4}{x^*_5}}}  = c; \; x^*_5 > 0 \right\rbrace
			\]
			where $c \in \R$ is some constant. For $\F_{G_{12}^\lambda}$, we consider the leaf $\tilde{L} \subset {V_{2}}_{+} \subset V_{2}$ 
			as follows
			\[
				\tilde{L} = \left\lbrace  \tilde{v} \in V_{2} \, \Big| \left(\tilde{x}_2-\frac{\tilde{x}_3\tilde{x}_4}{\tilde{x}_5}\right) 
				\frac{1}{{\tilde {x}_5}^{\frac{\lambda}{1+\lambda}}e^{\frac{{\tilde{x}}_4}{(1+\lambda)\tilde{x}_5}}}= c; \, \, \, \tilde{x}_5>0 \right\rbrace
			\]
			where $\tilde{v} = (\tilde{x}_1, \tilde{x}_2, \tilde{x}_3, \tilde{x}_4, \tilde{x}_5, \tilde{x}, \tilde{y}) \in V_2$. 
			By the formula of $h_{7(\lambda\neq-1)}$, it is plain that $h_{7(\lambda\neq-1)}(v) = \tilde{v}$ is equivalent to
			\[
				\begin{array}{rl}
					& \tilde{v} = \left( x^*_1,\frac{x^*_2{x^*_5}^{\frac{1}{\lambda_2-\lambda_1-1}}}{{x^*_4}^{\frac{1+\lambda_1}{\lambda_2-\lambda_1-1}}}, 
						\frac{x^*_3{x^*_5}^{\frac{1}{\lambda_2-\lambda_1-1}}}{{x^*_4}^{\frac{1+\lambda_1}{\lambda_2-\lambda_1-1}}}, x^*_4, x^*_5, x^*, y^*\right) \\		
					\Leftrightarrow & 
						\begin{cases}
							\tilde{x}_1=x^*_1 \\
							\tilde{x}_2=  \frac{x^*_2{x^*_5}^{\frac{\lambda}{1+\lambda}}}{e^{\frac{\lambda x^*_4}{(1+\lambda)x^*_5}}} \\
							\tilde{x}_3 = \frac{x^*_3{x^*_5}^{\frac{\lambda}{1+\lambda}}}{e^{\frac{\lambda x^*_4}{(1+\lambda)x^*_5}}} \\
							\tilde{x}_4=x^*_4, \, \tilde{x}_5=x^*_5, \, \tilde{x}=x^*, \, \tilde{y}=y^*.
						\end{cases}
				\end{array}
			\]		
			Therefore
			\[
				\begin{array}{l l l l}
					v \in L & \Leftrightarrow & \left(x^*_2-\frac{x^*_3x^*_4}{x^*_5}\right)\frac{1}{e^{\frac{x^*_4}{x^*_5}}} = c; & x^*_5 > 0 \\		
					& \Leftrightarrow & \left(\tilde{x}_2-\frac{\tilde{x}_3\tilde{x}_4}{\tilde{x}_5}\right) 
						\frac{1}{{\tilde {x}_5}^{\frac{\lambda}{1+\lambda}}e^{\frac{{\tilde{x}}_4}{(1+\lambda)\tilde{x}_5}}} =c; & \tilde{x}_5>0 \\
					& \Leftrightarrow & h_{7(\lambda\neq-1)}(v) = \tilde{v} \in \tilde{L}
				\end{array}
			\]
			i.e. $h_{7(\lambda\neq-1)}(L) 
			= \tilde{L}$ for $L \subset {V_{2}}_{+}$. Similarly, $h_{7(\lambda\neq-1)}(L) = \tilde{L}$ for $L$ in ${V_{2}}_{-}$.
			Since $h_{7(\lambda\neq-1)}$ sends leaves of $\F_{G_{12}^{0}}$ onto those of $\F_{G_{12}^\lambda}$, 
			all foliations $\left( V_{2}, \F_{G_{12}^{\lambda}}\right)$ are topologically equivelant. This types is denoted by $\F_2$. 
			
			\item Similarly, consider homeomorphisms of $V_3$ as follows:
			\[
				\begin{array}{l}
					h_{8(\lambda)}(v) := 
						\begin{cases}
							\left( x^*_1, \frac{x^*_2}{e^{\lambda a}}, \frac{x^*_3}{e^{\lambda a}}, x^*_4, x^*_5, x^*, y^*\right), 
								& x^*_4x^*_5\neq0 \\
							(x^*_1, x^*_2, x^*_3, 0, x^*_5, x^*, y^*), & x^*_4=0, x^*_5\neq0\\
							(x^*_1, x^*_2, x^*_3, x^*_4, 0, x^*, y^*), & x^*_4\neq0, x^*_5=0
						\end{cases}	\\
					h_{9(\lambda_1\neq-1,\lambda_2)}(v) := 
						\begin{cases}
							\left( x^*_1, \frac{x^*_2}{ce^{\lambda_2 b}}, 
							\frac{x^*_3}{ce^{\lambda b}}, x^*_4, x^*_5, x^*, y^*\right), & x^*_4x^*_5\neq0 \\
							\left( x^*_1, \frac{x^*_2}{c}, x^*_3, 0, x^*_5, x^*, y^*\right), & x^*_4=0 \neq x^*_5 \\
							\left( x^*_1, x^*_2, \frac{x^*_3}{c}, x^*_4, 0, x^*, y^*\right), & x^*_4 \neq 0 = x^*_5 
						\end{cases}	\\
					h_{10}(v) := 
						\begin{cases}
							\left( x^*_1, x^*_2+\frac{d}{2x^*_5}, x^*_3, x^*_4, x^*_5, x^*, y^*\right), & x^*_4x^*_5\neq0 \\
							\left( x^*_1, x^*_2+x^*_5\ln\vert x^*_5\vert, x^*_3, 0, x^*_5, x^*, y^*\right), & x^*_4=0 \neq x^*_5 \\
							\left( x^*_1, x^*_2, x^*_3-x^*_4\ln\vert x^*_4\vert, x^*_4, 0, x^*, y^*\right), & x^*_4 \neq 0 = x^*_5
						\end{cases}	\\
					h_{11(\lambda)}(v) :=
						\begin{cases}
							\left( x^*_1, \frac{2x^*_2 + x^*_4 + b}{2} + \frac{\lambda d}{2x^*_4}, 
							x^*_3, x^* _4, x^*_5, x^*, y^*\right), & x^*_4x^*_5\neq0 \\
							\left( x^*_1, x^*_2+\lambda x^*_5\ln\vert x^*_5\vert, x^*_3, 0, x^*_5, x^*, y^*\right), & x^*_4=0 \neq x^*_5 \\
							\left( x^*_1, x^*_2, x^*_3-\lambda x^*_4\ln\vert x^*_4\vert, x^*_4, 0, x^*, y^*\right), & x^*_4 \neq 0 = x^*_5
						\end{cases}
				\end{array}
			\]
			where $a = \arctan\frac{x^*_4}{x^*_5}$, $b = \arctan\frac{x^*_5}{x^*_4}$, $c = \left({x_4^*}^2+{x_5^*}^2\right)^\frac{1}{1+\lambda_1}$,
			and $d = \left({x^*_4}^2+{x^*_5}^2\right) \ln \left({x^*_4}^2+{x^*_5}^2\right)$. We can check that 
			$h_{8(\lambda)}$, $h_{9(\lambda_1\neq-1,\lambda_2)}$, $h_{10}$ and $h_{11(\lambda)}$ send leaves of $\F_{G_{13}^0}$ 
			onto those of $\F_{G_{13}^\lambda}$, $\F_{G_{14}^{\lambda_1\lambda_2}}$, $\F_{G_{15}}$, $\F_{G_{16}^\lambda}$, respectively. 
			In fact, we will proof the first case in detail, the remaining cases are absolutely similar. 
			First, take an arbitrary leaf $L$ of $\F_{G_{13}^0}$. Without loss of generality, assume that $L \subset V_3$ as in \eqref{FoliatedManifold2}, i.e.
			\[
				L  = \left\lbrace v \in V_3 \; \Big| \; \frac{x^*_2x^*_5-x^*_3x^*_4}{{x^{*}_4}^2+{x^{*}_5}^2}= c; \, \, \tilde{x}^2_4+ \tilde{x}^2_5 \neq 0\right\rbrace
			\]
			where $c \in \R$ is some constant. For $\F_{G_{13}^\lambda}$, consider the leaf $\tilde{L} \subset V_3$ as follows
			\[
				\tilde{L} = \left\lbrace \tilde{v} \in V_3 \; \Big| \; \frac{\tilde{x}_2\tilde{x}_5-\tilde{x}_3\tilde{x}_4}{{\tilde{x}_4}^2+{\tilde{x}_5}^2} 
				e^{\lambda \arctan\frac{\tilde{x}_4}{\tilde{x}_5}}	 =c; \, \, \tilde{x}^2_4 +\tilde{x}^2_5\neq0\right\rbrace
			\]
			where $\tilde{v} = (\tilde{x}_1, \tilde{x}_2, \tilde{x}_3, \tilde{x}_4, \tilde{x}_5, \tilde{x}, \tilde{y}) \in V_3$. 
			By the formula of $h_{8(\lambda)}$, it is plain that $h_{8(\lambda)}(v) = \tilde{v}$ is equivalent to
			\[
				\begin{array}{rl}
					& \tilde{v} =
						\begin{cases}
							\left( x^*_1, \frac{x^*_2}{e^{\lambda a}}, \frac{x^*_3}{e^{\lambda a}}, x^* _4, x^*_5, x^*, y^*\right), 
								&  x^*_4x^*_5 \neq 0 \\
							(x^*_1, x^*_2, x^*_3, 0, x^*_5, x^*, y^*), & x^*_4=0 \neq x^*_5 \\
							(x^*_1, x^*_2, x^*_3, x^*_4, 0, x^*, y^*), & x^*_4 \neq 0 = x^*_5
						\end{cases} \\
					\Leftrightarrow & 
					\begin{cases}
						\tilde{x}_1=x^*_1, \tilde{x}_2=  \frac{x^*_2}{e^{\lambda a}}, \tilde{x}_3 = \frac{x^*_3}{e^{\lambda a}},
						\tilde{x}_4=x^*_4\neq0, \tilde{x}_5=x^*_5\neq0, \tilde{x}=x^*, \tilde{y}=y^* \\
						\tilde{x}_1=x^*_1, \tilde{x}_2= x^*_2, \tilde{x}_3 = x^*_3, \tilde{x}_4=x^*_4=0, \tilde{x}_5=x^*_5 \neq 0, \tilde{x}=x^*, \tilde{y}=y^*\\
						\tilde{x}_1=x^*_1, \tilde{x}_2= x^*_2, \tilde{x}_3 = x^*_3, \tilde{x}_4=x^*_4\neq0, \tilde{x}_5=x^*_5 = 0, \tilde{x}=x^*, \tilde{y}=y^*.
					\end{cases}
				\end{array}
			\]		
			Therefore
			\[
				\begin{array}{l l l l}
					v \in L & \Leftrightarrow & \dfrac{x^*_2x^*_5-x^*_3x^*_4}{{x^{*}_4}^2+{x^{*}_5}^2} = c; & {x^*_4}^2 + {x^*_5}^2 \neq 0 \\		
					& \Leftrightarrow & \dfrac{\tilde{x}_2\tilde{x}_5-\tilde{x}_3\tilde{x}_4}{{\tilde{x}_4}^2
					+ {\tilde{x}_5}^2}e^{\lambda \arctan\frac{\tilde{x}_4}{\tilde{x}_5}}	 =c; & \tilde{x}^2_4 + \tilde{x}^2_5 \neq 0 \\
					& \Leftrightarrow & h_{8(\lambda)}(v) = \tilde{v} \in \tilde{L}
				\end{array}
			\]
			i.e. $h_{8(\lambda)}(L) = \tilde{L}$ for $L \subset V_3$. Therefore, $h_{8(\lambda)}$ sends leaves of $\F_{G_{13}^0}$ onto those of 
			$\F_{G_{13}^\lambda}$, and all foliations $\left( V_3, \F_{G_{13}^\lambda}\right)$ are topological equivalent.
			
				To sum up, all foliations from the set in $\big\{(V_G, \F_G) / G \in \A_2 \big\}$ are topologically equivalent and they determine the type $\F_3$. Thus, the assertion in Item \eqref{TopoTypes3} of Theorem \ref{TopoClass} is proved.	
			Moreover, the non-equivalence of types $\F_1$, $\F_2$ and $\F_3$ are evident since the foliated manifolds $V_1$, $V_2$ and
			$V_3$ are not homeomorphic.
		\end{enumerate}

		\item Now we prove the second of Theorem \ref{TopoClass}.
		\begin{enumerate}
			\item Evidently, 
			by \cite[Theorem 23]{t-v} the assertion in Item \eqref{TopoTypes1} of Theorem \ref{TopoClass} is proved.
			
			\item Due to Item \eqref{TopoTypes2}, we only need to prove this assertion for $\left( V_2, \F_{G_{12}^0}\right)$.
			
			Recall that a subset $W \subset V$ of a foliation $(V, \F)$ is said to be saturated (with respect to the foliation)
			if every leaf $L$ of $\F$ has a non-empty intersection with $W$ then it lies entirely in $W$, i.e. if $L \cap W \neq \emptyset$ then $L \subset W$. 
			If $W \subset V$ is a saturated submanifold then the family of all leaves $L$ of $(V, \F)$ such that $L \subset W$ forms a new foliation on $W$ 
			which is denoted by $(W, \F_W)$ and it is called the \emph{restriction} or \emph{subfoliation} of $(V, \F)$ on $W$.
			
			By \eqref{FoliatedManifold1}, the foliated manifold $V_2$ of all GMD-foliations as the disjoint union of two open subsets 
			${V_2}_{+}$ and ${V_2}_{-}$ given by \eqref{SS3}. These subsets are two connected components of $V_2$ 
			which are saturated with respect to all GMD-foliations.
			
			Now, we consider the $\left( V_2, \F_{G_{12}^0}\right)$. For convenience, we denote its restrictions on ${V_2}_{+}$ and ${V_2}_{-}$ 
			by ${\F_2}_{+}$ and ${\F_2}_{-}$, respectively. Let $p \colon V_2 \to \R$ be the map that defines as follows
			\[
				p(x^*_1,x^*_2, x^*_3, x^*_4, x^*_5, x^*, y^*) := \left(x^*_2-\frac{x^*_3x^*_4}{x^*_5}\right)\frac{1}{e^{\frac{x^*_4}{x^*_5}}}
			\]
			where $(x^*_1,x^*_2, x^*_3, x^*_4, x^*_5, x^*, y^*) \in V_2$. It can be verified that $p_{+} := p\vert _{{V_2}_{+}}$ is a submersion 
			and $p_{+} \colon {V_2}_{+} \to \R$ is a fibration over $\R$ with connected fibers. 
			Moreover, ${\F_2}_{+}$ comes from this fibration. Similarly, the foliation ${\F_2}_{-}$ also comes from a fibration on $\R$ with connected fibers. 
			Therefore, $\left( V_2, \F_{G_{12}^0}\right)$ comes from a fibration with connected fibers on $\R \sqcup \R$, and so does the type $\F_2$. 
			
			\item For the assertion in Item \eqref{TopoTypes3} of Theorem \ref{TopoClass}, we only need to prove for $\left( V_3, \F_{G_{13}^0}\right)$.
			
			
			Let $p \colon V_3 \to \R$ be the map that defines as follows
			\[
			p(x^*_1,x^*_2, x^*_3, x^*_4, x^*_5, x^*, y^*) := \frac{x^*_2x^*_5-x^*_3x^*_4}{{x^{*}_4}^2+{x^{*}_5}^2}
			\]
			where $(x^*_1,x^*_2, x^*_3, x^*_4, x^*_5, x^*, y^*) \in V_3$. It can be verified that $p$ is a submersion 
			and $p$ is a fibration over $\R$ with connected fibers. 
			Therefore, $\left( V_3, \F_{G_{13}^0}\right)$ comes from a fibration with connected fibers on $\R$, and so does the type $\F_3$.		
		\end{enumerate}
	\end{enumerate}
	The proof of Theorem \ref{TopoClass} is complete.
\end{proof}

As an immediate consequence of Theorem \ref{TopoClass} and Connes \cite[Section 5, p. 16]{con}, we have the following.

\begin{cor}\label{Lastcor}
	All Connes' C*-algebras of GMD-foliations in Theorem \ref{TopoClass} are determined as follows:
	\begin{enumerate}
		\item $C^*(\F_1) \cong \left(C_0(\R) \oplus C_0(\R) \oplus C_0(\R) \oplus C_0(\R) \right) \otimes \mathcal{K}$,
		
		\item $C^*(\F_2) \cong \left(C_0(\R) \oplus C_0(\R) \right) \otimes \mathcal{K}$,
		
		\item $C^*(\F_3) \cong  C_0(\R) \otimes \mathcal{K}$,
	\end{enumerate}
	where $C_0(\R)$ is the C*-algebra of continuous complex-valued functions defined on $\R$ vanishing at infinity, 
	and $\mathcal{K}$ denotes the C*-algebra of compact operators on an infinite-dimensional separable Hilbert space.
\end{cor}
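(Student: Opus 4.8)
The plan is to read off the three C*-algebras directly from the topological description in Theorem \ref{TopoClass}, using the foundational computation of Connes recalled in the Introduction: whenever a measurable foliation $(V, \F)$ arises from a fibration $p \colon V \to M$ with connected fibers, its associated C*-algebra satisfies $C^*(V, \F) \cong C_0(M) \otimes \mathcal{K}$. By the second assertion of Theorem \ref{TopoClass}, each of the three foliation types $\F_1$, $\F_2$, $\F_3$ is exactly such a (trivial) fibration with connected fibers, so that the only remaining data is the base manifold $M$ of each type.

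Before applying Connes' formula I would record the elementary C*-algebraic fact that for a locally compact Hausdorff space decomposed as a finite disjoint union of open-and-closed pieces $M = \bigsqcup_{i=1}^{n} M_i$, one has the isomorphism
\[
    C_0(M) \cong \bigoplus_{i=1}^{n} C_0(M_i),
\]
since a function on $M$ vanishing at infinity is precisely an $n$-tuple of such functions on the components, with the sup-norm taken piecewise. Moreover, tensoring with $\mathcal{K}$ distributes over finite direct sums, giving $C_0(M) \otimes \mathcal{K} \cong \bigoplus_{i=1}^{n} \bigl(C_0(M_i) \otimes \mathcal{K}\bigr)$.

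It then remains only to substitute the base of each type. For $\F_1$ the base is $M = \R \sqcup \R \sqcup \R \sqcup \R$, so Connes' formula together with the displayed decomposition yields $C^*(\F_1) \cong \bigl(C_0(\R) \oplus C_0(\R) \oplus C_0(\R) \oplus C_0(\R)\bigr) \otimes \mathcal{K}$. For $\F_2$ the base is $\R \sqcup \R$, giving $C^*(\F_2) \cong \bigl(C_0(\R) \oplus C_0(\R)\bigr) \otimes \mathcal{K}$, and for $\F_3$ the base is a single line $\R$, giving $C^*(\F_3) \cong C_0(\R) \otimes \mathcal{K}$. These are precisely the three claimed isomorphisms.

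Because all of the substantive geometry has already been carried out in Theorem \ref{TopoClass}, the argument is genuinely routine and there is no serious obstacle. The one point I would check with some care is that Connes' isomorphism is compatible with the topological equivalences built in the proof of Theorem \ref{TopoClass}, so that it is legitimate to compute $C^*$ for a single representative of each type (namely $G_2$, $G_{12}^0$ and $G_{13}^0$) and transport the answer to the whole type; this holds because topologically equivalent foliations have isomorphic Connes C*-algebras.
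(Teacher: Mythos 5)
Your proposal is correct and takes essentially the same route as the paper, which presents the corollary as an immediate consequence of Theorem \ref{TopoClass} together with Connes' computation $C^*(V,\F) \cong C_0(M) \otimes \mathcal{K}$ for foliations coming from fibrations with connected fibers, combined with the elementary decomposition of $C_0$ over a finite disjoint union $\R \sqcup \dots \sqcup \R$ exactly as you spell out. Your closing worry about transporting Connes' isomorphism along the topological equivalences is in fact unnecessary: Theorem \ref{TopoClass} (second assertion) already states that \emph{every} GMD-foliation of each type is itself a trivial fibration with connected fibers over the stated base (the fibration property transports under a leaf-preserving homeomorphism), so Connes' formula applies to each foliation directly rather than only to the representatives $G_2$, $G_{12}^0$, $G_{13}^0$.
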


\section{Conclusion}

We have considered connected and simply connected Lie groups corresponding to 7-dimensional Lie algebras with nilradical $\g_{5,2}$. 
The main results of the paper are as follows. First, we describe the picture of maximal-dimensional K-orbits of considered Lie groups 
as well as their geometric characteristics in Theorem \ref{picture K-orbit}, Remark \ref{RemarkOrbits} and Remark \ref{RemarkGenericOrbits}.
Afterwards, Theorem \ref{FormedFoliation} proves that the families of all generic maximal-dimensional K-orbits of considered Lie groups 
form measurable foliations (in the sense of Connes) which is called GMD-foliations.
Finally, the topological classification of all GMD-foliations is given Theorem \ref{TopoClass}, and their Connes' C*-algebras
are also described in Corollary \ref{Lastcor}.

\section*{Acknowledgements} 
Tuyen T. M. Nguyen was funded by Vingroup Joint Stock Company and supported by the Domestic Master/PhD Scholarship 
Programme of Vingroup Innovation Foundation (VINIF), Vingroup Big Data Institute (VINBIGDATA), VINIF.2021.TS.159.
A part of this paper was done when Vu A. Le visited Vietnam Institute for Advanced Study in Mathematics (VIASM) in summer 2022,
and the author is very grateful to VIASM for the support and hospitality.


\end{document}